\DeclareMathOperator{\diam}{diam}
\DeclareMathOperator{\dist}{dist}
\DeclareMathOperator{\interior}{int}
\title{Bi-Lipschitz Pieces between Manifolds}
\author{Guy C. David}
\date{December 13, 2013}
\begin{document}
\maketitle
\newtheorem{thm}{Theorem}[section]
\newtheorem{lemma}[thm]{Lemma}
\newtheorem{prop}[thm]{Proposition}
\newtheorem{cor}[thm]{Corollary}
\newtheorem{claim}[thm]{Claim}

\theoremstyle{remark}
\newtheorem{rmk}[thm]{Remark}

\theoremstyle{definition}
\newtheorem{definition}[thm]{Definition}

\numberwithin{equation}{section}

\begin{abstract}
A well-known class of questions asks the following: If $X$ and $Y$ are metric measure spaces and $f:X\rightarrow Y$ is a Lipschitz mapping whose image has positive measure, then must $f$ have large pieces on which it is bi-Lipschitz? Building on methods of David (who is not the present author) and Semmes, we answer this question in the affirmative for Lipschitz mappings between certain types of Ahlfors $s$-regular, topological $d$-manifolds. In general, these manifolds need not be bi-Lipschitz embeddable in any Euclidean space. To prove the result, we use some facts on the Gromov-Hausdorff convergence of manifolds and a topological theorem of Bonk and Kleiner. This also yields a new proof of the uniform rectifiability of some metric manifolds.
\end{abstract}

\tableofcontents

\section{Introduction}

There are nowadays many different theorems of the following general form: Let $(X, d, \mu)$ and $(Y,\rho, \nu)$ be metric measure spaces (satisfying some assumptions), and let $f:X\rightarrow Y$ be a Lipschitz map whose image has positive $\nu$-measure. Then $f$ must be bi-Lipschitz on a subset of large measure, in a quantitative way.

This class of theorems is not true in general, and later on in Section \ref{counterex} we will mention some interesting cases where it fails. However, there are a number of situations in which results of this form can be proven.

The idea started with \cite{Da88}, where David\footnote{The Guy David mentioned here and in the references is a professor at Universit\'e Paris-Sud and has no relation to the author of this paper, who is a graduate student at UCLA. The author wishes to apologize for any confusion generated by this amusing coincidence.} examined the case in which $(X,d,\mu)$ is Ahlfors $n$-regular and $Y$ is $\mathbb{R}^n$ with the standard metric and Lebesgue measure. David showed that if, in addition to these assumptions, $f$ satisfies a certain technical condition that we will discuss below, then it is quantitatively bi-Lipschitz on a set of large measure. By verifying his technical condition, David then applied this theorem to show that if an $L$-Lipschitz map $f$ from the unit cube $[0,1]^d$ into $\mathbb{R}^d$ has an image of Lebesgue measure at least $\delta>0$, then $f$ is $M$-bi-Lipschitz on a set of Lebesgue measure $\theta$ in the cube, where $\theta$ and $M$ depend only on $L$ and $\delta$.

Quite different methods were then invented by Jones \cite{Jo88} and David \cite{Da91} to show the result in the case $X=[0,1]^d$ and $Y=\mathbb{R}^D$ equipped with $d$-dimensional Hausdorff measure, where $D\geq d$. In 2009, Schul \cite{Sc09} showed the result in the case where $X=[0,1]^d$ and $Y$ is an arbitrary metric space, again equipped with $d$-dimensional Hausdorff measure. In addition, Meyerson \cite{Me13} used techniques of Jones and David to show the result when $X$ and $Y$ are Carnot groups.

Here we do not use the later methods of \cite{Jo88}, \cite{Da91}, and \cite{Sc09}, but rather the original method of David, which required verifying a certain technical condition on the Lipschitz map and the spaces in question. Originally, this applied only in the case $Y=\mathbb{R}^d$, but later Semmes \cite{Se00} generalized David's theorem to the case of arbitrary target spaces $Y$.

In this paper, we apply this theorem of Semmes and adapt David's original argument to show the ``Lipschitz implies bi-Lipschitz'' result for Lipschitz maps between certain types of abstract manifolds. Our main result is the following theorem. 

\begin{thm}\label{blp}
Let $X$ and $Y$ be Ahlfors $s$-regular, linearly locally contractible, complete, oriented, topological $d$-manifolds, for $s>0$, $d\in \mathbb{N}$. Suppose in addition that $Y$ has $d$-manifold weak tangents.

Suppose $I_0$ is a dyadic $0$-cube in $X$ and $z:I_0\rightarrow Y$ is a Lipschitz map. Then for every $\epsilon>0$, there are measurable subsets $E_1, \dots, E_l \subset I_0$, such that $z|_{E_i}$ is $M$-bi-Lipschitz, and
$$ \left|z\left(I_0 \setminus \bigcup_{i=1}^l  E_i\right)\right| < \epsilon \left|I_0\right|. $$

Here $l$ and $M$ depend only on $\epsilon$, the Lipschitz constant of $z$, the data of $X$, and the space $Y$. 
\end{thm}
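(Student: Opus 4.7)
The plan is to deduce Theorem \ref{blp} from Semmes' extension \cite{Se00} of David's bi-Lipschitz pieces theorem \cite{Da88}. Semmes' theorem reduces the conclusion to verifying a ``big images'' (non-degeneracy) condition on $z$ of roughly the following form: for every $\eta > 0$ there exists $\delta > 0$ such that, outside a Carleson-sparse family of dyadic subcubes $Q \subseteq I_0$, the image $z(Q)$ is non-degenerate at scale $\ell(Q)$, in the sense that $|z(Q)| \gtrsim \ell(Q)^s$ and $z(Q)$ essentially covers a ball in $Y$ of radius comparable to $\ell(Q)$. Granting Semmes' theorem, the real work is then to verify this non-degeneracy condition for Lipschitz maps between the given metric manifolds.

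To verify the non-degeneracy, I would argue by contradiction via a blow-up. If the condition fails, there is a sequence of dyadic cubes $Q_n \subseteq I_0$, of side length $r_n$ and centered at points $x_n$, along which $z(Q_n)$ is increasingly degenerate (for example $|z(Q_n)|/r_n^s \to 0$). Rescale $X$ by $r_n^{-1}$ about $x_n$ and $Y$ by $r_n^{-1}$ about $z(x_n)$. Ahlfors $s$-regularity is a uniform doubling condition preserved by scaling, so pointed Gromov--Hausdorff compactness for proper doubling spaces yields subsequential weak-tangent limits $\widetilde X$ and $\widetilde Y$. The rescaled maps $z_n$ are uniformly Lipschitz, and an Arzel\`a--Ascoli argument in the Gromov--Hausdorff setting produces a Lipschitz limit $\widetilde z : \widetilde X \to \widetilde Y$ whose image of the unit ball has vanishing $s$-dimensional Hausdorff measure.

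By the standing hypothesis, $\widetilde Y$ is a topological $d$-manifold. Using that $X$ is itself a topological $d$-manifold and that linear local contractibility, Ahlfors $s$-regularity, and orientation are stable under pointed Gromov--Hausdorff limits in this category, one argues that $\widetilde X$ is also a topological $d$-manifold with quantitative LLC control. At this point a topological theorem of Bonk and Kleiner is invoked in this class of spaces: intuitively, a continuous map between two oriented topological $d$-manifolds of the given regularity cannot crush an open ball to a set of $\mathcal{H}^s$-measure zero without losing all local topological degree. Since $\widetilde z$ inherits from $z$ a non-trivial local degree near the origin (the failure of non-degeneracy is a measure-theoretic, not a topological, statement), this contradicts $|\widetilde z(B(0,1))| = 0$ and closes the argument.

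The main obstacle, and the heart of the paper, is verifying that enough topological and geometric structure passes to the blow-up to invoke Bonk--Kleiner. Concretely, I expect the subtle points to be: (i) showing that $\widetilde X$ is genuinely a topological $d$-manifold (not merely a metric limit with manifold-like local topology), using LLC and orientation quantitatively; (ii) ensuring that the limit map $\widetilde z$ retains sufficient topological content --- such as a well-defined local degree around the origin --- so that its ``collapse'' onto a null set is genuinely topologically forbidden rather than only measure-theoretically surprising; and (iii) matching the precise Carleson-type form of the exceptional set that Semmes' theorem requires with the contradiction extracted by the blow-up. Once these three passages through the limit are handled, the remainder of the argument is essentially David's classical blow-up strategy transplanted from Euclidean targets to the abstract metric-manifold setting.
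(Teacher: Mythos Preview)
Your high-level architecture---reduce to Semmes' criterion, then argue by contradiction via Gromov--Hausdorff blow-up and invoke Bonk--Kleiner---matches the paper. But the way you have formulated both the condition to verify and the contradiction hypothesis is off, and this is not cosmetic: it changes what the limit map looks like and what you can do with it.

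David's condition (Definition~\ref{davidscondition}) is not a ``big images outside a Carleson-sparse family'' statement. It is an \emph{alternative}: whenever $T$ is a union of $j$-cubes around $x$ with $|z(T)|\ge \gamma|T|$ (so the image is already assumed large), then either (i) $z(T)\supseteq B(z(x),\lambda 2^j)$, or (ii) some $j$-subcube $R\subset T$ has $|z(R)|/|R|\ge (1+2\eta)|z(T)|/|T|$. To contradict this, you take a sequence where \emph{both} (i) and (ii) fail with constants $\Lambda_k\to\infty$, $\eta_k\to 0$. After rescaling, the limit map $w$ is not a map with vanishing-measure image; on the contrary, $|w_k(S_k)|=\gamma|S_k|$ for all $k$, and the failure of (ii) forces every subcube to have expansion ratio essentially equal to $\gamma$---a ``constant Jacobian'' situation. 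This is the crucial input: it is precisely what yields \emph{bounded multiplicity} of the limit map (Lemma~\ref{wboundedmult}), which is the hypothesis of the Bonk--Kleiner theorem. In your setup, with $|z(Q_n)|/r_n^s\to 0$, nothing prevents $z$ from being constant, the limit map is then constant, and there is no degree or multiplicity argument to run.

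Two further corrections. First, the limit source space $\widetilde X$ is \emph{not} shown to be a topological $d$-manifold; the paper proves only that it is a homology $d$-manifold (Proposition~\ref{manifoldlimit}), and the degree theory is set up to work in that generality. Second, the limit map does not ``inherit'' nontrivial degree from $z$. Rather, one uses bounded multiplicity plus Bonk--Kleiner plus a separate injectivity argument (again exploiting the constant-Jacobian property, Lemma~\ref{1preimage}) to show $w$ is a local homeomorphism; \emph{then} degree theory (Lemma~\ref{degree}) pushes the ``image contains a ball'' conclusion back down to $w_k$ for large $k$, contradicting the failure of (i).
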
 

All the relevant definitions will be given in Subsections \ref{definitions} and \ref{dyadiccubes} below.

Note that Theorem \ref{blp} implies in particular the type of result mentioned at the beginning of this paper: if the image of $z$ has positive measure in $Y$, then $z$ is bi-Lipschitz on a set of definite size in $I_0$. This stronger conclusion, in which the domain of the mapping admits a decomposition into pieces on which the mapping is bi-Lipschitz and a ``garbage'' piece of small image, is typical and appears in the works \cite{Jo88}, \cite{Da91}, \cite{Se00}, \cite{Sc09}, and \cite{Me13} mentioned above.

\subsection{Definitions}\label{definitions}
Recall that if $(X,d)$ and $(Y,\rho)$ are metric spaces, then a map $f:X\rightarrow Y$ is called \textit{Lipschitz} if there is a constant $C$ such that
$$ \rho(f(x), f(y)) \leq C d(x,y). $$
The map $f$ is called \textit{bi-Lipschitz}, if there is a constant $C$ such that
$$ C^{-1} d(x,y) \leq \rho(f(x), f(y)) \leq C d(x,y). $$
If we wish to emphasize the constant, we will call such mappings $C$-Lipschitz or $C$-bi-Lipschitz.

The following definition makes sense for general measures, but, following Semmes \cite{Se00}, we will consider only Hausdorff measure $\mathcal{H}^s$.
\begin{definition}\label{AR}
A metric space $(X,d)$ is \textit{Ahlfors $s$-regular} if there is a constant $C_0$ such that for all $x\in X$ and $r\leq\diam X$, we have
$$ C_0^{-1} r^s \leq \mathcal{H}^s(\overline{B}(x,r)) \leq C_0 r^s. $$
\end{definition}

From now on, whenever we speak of a measure we will speak of $s$-dimensional Hausdorff measure in an Ahlfors $s$-regular space. To simplify notation, we therefore always write $|A|$ or, to avoid confusion, $|A|_X$ for the $s$-dimensional Hausdorff measure of a set $A$ in a space $X$.

\begin{definition}\label{LLC}
A metric space $(X,d)$ is called \textit{linearly locally contractible} if there are constants $L, r_0>0$ such that every open ball $B\subset X$ of radius $r<r_0$ is contractible inside a ball with the same center of radius $L r$.  We may abbreviate the condition as LLC or $(L,r_0)$-LLC to emphasize the constants.
\end{definition}

\begin{rmk}
In some contexts, the abbreviation LLC refers to the weaker condition of ``linear local connectivity''. We do not use this condition in this paper.
\end{rmk}

The class of source and target spaces we consider in this paper are complete, oriented topological $d$-manifolds that are Ahlfors $s$-regular and LLC. If $X$ is such a space, the phrase ``the data of $X$'' refers to the collection of constants associated to $X$: the dimensions $d$ and $s$, the constant $C_0$ appearing in the Ahlfors regularity of $X$, and the constants $L$ and $r_0$ appearing in the LLC property of $X$.

There is also an additional constraint on the class of target spaces for which our theorem applies. This requires the notion of convergence of a sequence of pointed metric spaces, which we introduce in Definition \ref{spaceconvergence} below.

\begin{definition}\label{manifoldtangents}
We say a complete metric space $(Y,\rho)$ has $d$-\textit{manifold weak tangents} if the following holds: Whenever ${r_i}$ is a sequence of positive real numbers that is bounded above, $p_i$ are points in $Y$, and $(Y, \frac{1}{r_i} \rho, p_i)$ converges (as in Definition \ref{spaceconvergence}) to a space $(Y_\infty, \rho_\infty, p_\infty)$, then $Y_\infty$ is a topological $d$-manifold.
\end{definition}

\begin{rmk}
Note that Definition \ref{manifoldtangents} includes the assumption that $Y$ itself is a topological $d$-manifold, by taking $r_i=1$ and $p_i=p$ for all $i$.
\end{rmk}

\begin{rmk}\label{manifoldtangentexamples}
While Definitions \ref{AR} and \ref{LLC} are rather standard, Definition \ref{manifoldtangents} is more unusual, and somewhat restrictive. Here are some examples of spaces that satisfy it:
\begin{itemize}
\item The simplest example is $\mathbb{R}^d$, for $d\geq 1$. Indeed, if $Y=\mathbb{R}^d$, then all the pointed metric spaces $(Y, \frac{1}{r_i} \rho, p_i)$ are isometric to $(\mathbb{R}^d,|\cdot|,0)$ by rescaling and translating. Therefore, the limiting space of this sequence is also $\mathbb{R}^d$, which is a topological $d$-manifold.
\item For the same reasons, every Carnot group $G$, equipped with its Carnot-Carath\'eodory metric, has $d$-manifold weak tangents, where $d$ is the topological dimension of $G$.
\item If $X$ is a compact, doubling metric space with $d$-manifold weak tangents, and $Y$ is quasisymmetric to $X$, then $Y$ has $d$-manifold weak tangents. This follows, e.g., from \cite{KL04}, Lemmas 2.4.3 and 2.4.7. (For the definition and properties of quasisymmetric mappings, see \cite{He01}.) 
\item Similarly, if $G$ is a topologically $d$-dimensional Carnot group, and $Y$ is quasisymmetric to $G$, then $Y$ has $d$-manifold weak tangents (even if $Y$ has larger Hausdorff dimension than $G$). This includes all ``snowflaked'' Carnot groups, i.e. metric spaces of the form $(G, \rho^\alpha)$, where $0< \alpha \leq 1$ and $(G,\rho)$ is a Carnot group. 
\item The Cartesian product of two spaces $(X,d_X), (Y, d_Y)$ with $n$- and $m$-manifold weak tangents, respectively, (equipped, e.g., with the metric $d((x,y),(x',y')) = d_X(x,x')+d_Y(y,y')$) has $(n+m)$-manifold weak tangents.
\item Any complete, doubling, linearly locally contractible topological $2$-manifold has $2$-manifold weak tangents. Indeed, by Proposition \ref{manifoldlimit} below, every weak tangent of such a space is a homology $2$-manifold (see Definition \ref{hommfld}), and the only homology $2$-manifolds are topological $2$-manifolds. (See \cite{Br97}, Theorem V.16.32.)
\item Suppose a compact metric space $Z$ has the property that every triple of points can be blown up to a uniformly separated triple by a uniformly quasi-M\"obius map. (This condition was studied by Bonk and Kleiner in \cite{BK02_rigidity} and is satisfied by boundaries of hyperbolic groups equipped with their visual metrics.) Then $Z$ has $d$-manifold weak tangents if and only if $Z$ is itself a topological $d$-manifold. This follows from \cite{BK02_rigidity}, Lemma 5.3. (Note that the definition of a weak tangent given in \cite{BK02_rigidity} is different than ours, in that it requires the sequence of scales $1/r_i$ tend to infinity. However, the proof of Lemma 5.3 in \cite{BK02_rigidity} works the same way without this restriction.)
\end{itemize}
\end{rmk}

\subsection{Dyadic cubes}\label{dyadiccubes}

If $X$ is a complete metric space that is Ahlfors $s$-regular with constant $C_0$, we can equip $X$ with a type of ``dyadic decomposition''. The formulation in \cite{Se00}, Section 2.3, is the easiest to apply here. It says that there exists $j_0\in\mathbb{N}\cup\{\infty\}$ (with $2^{j_0} \leq \diam X < 2^{j_0+1}$ if $X$ is bounded) such that for each $j<j_0$, there exists a partition $\Delta_j$ of $X$ into measurable subsets $Q\in \Delta_j$ such that

\begin{itemize}\label{cubes}
\item $Q\cap Q' = \emptyset$ if $Q,Q'\in \Delta_j$ and $Q\neq Q'$.
\item If $j\leq k < j_0$ and $Q\in \Delta_j, Q'\in \Delta_k$, then either $Q\subseteq Q'$ or $Q\cap Q' = \emptyset$. 
\item $C_0^{-1} 2^j \leq \diam Q \leq C_0 2^j$ and $C_0^{-1} 2^{sj} \leq |Q| \leq C_0 2^{sj}$.
\item For every $j< j_0$, $Q\in \Delta_j$, and $\tau>0$, we have
$$ |\{x\in Q : \dist(x, X\setminus Q) \leq \tau 2^j\}| \leq C_0\tau^{1/C_0} |Q|$$
$$ |\{x\in X\setminus Q : \dist(x, Q) \leq \tau 2^j\}| \leq C_0\tau^{1/C_0} |Q|$$
\end{itemize}

Note that these dyadic cubes are not necessarily closed or open, but merely measurable. They are also disjoint, and do not merely have disjoint interiors. In $\mathbb{R}^d$, one should think of these as analogous to ``half-open'' cubes of the form
$$ \left[ n_1 2^j, (n_1+1) 2^j \right) \times \left[ n_2 2^j, (n_2+1) 2^j )\right) \times \dots \times \left[ n_d 2^j, (n_d+1) 2^j )\right), $$
where $n_i\in\mathbb{Z}$.

It follows from the third and fourth conditions that for every $j<j_0$ and $Q\in \Delta_j$, there exists $x\in Q$ such that
\begin{equation*}\label{quasiround}
B(x, c_0 2^j) \subseteq Q \subseteq B(x, C_0 2^j)
\end{equation*}

All the constants in the cube decomposition depend only on $s$ and the Ahlfors-regularity constant of the space, and so we have denoted the larger cube constant also by $C_0$.

\subsection{Background and results}\label{background}

In \cite{Da88}, condition (9), David introduced the following condition for a Lipschitz map defined on a dyadic cube in an Ahlfors-regular space. Though David gave the condition for maps into $\mathbb{R}^d$, in \cite{Se00}, Condition 9.1, Semmes re-formulated David's condition for arbitrary target spaces. This is the formulation we give here. Recall that $|\cdot|$ denotes $s$-dimensional Hausdorff measure.

\begin{definition}\label{davidscondition}
Let $(X,d)$ be an Ahlfors $s$-regular metric space with a system of dyadic cubes as above. Let $(Y,\rho)$ be a metric space. Let $I_0$ be a $0$-cube in $X$, and $z:I_0\rightarrow Y$ be a Lipschitz map. We will say that $z$ satisfies \textit{David's condition} on $I_0$ if the following holds:

For every $\lambda, \gamma > 0$, there exist $\Lambda, \eta > 0$ such that, for every $x\in I_0$ and $j<j_0$, if $T$ is the union of all $j$-cubes intersecting $B(x,\Lambda 2^j)$, and if $T\subseteq I_0$ and $|z(T)|\geq \gamma |T|$, then either:
\begin{enumerate}[(i)]
\item \label{containsball} $z(T) \supseteq B(z(x), \lambda 2^{j})$, or 
\item \label{expandscube} there is a $j$-cube $R\subset T$ such that
$$ |z(R)|/|R| \geq (1+2\eta)|z(T)|/|T|$$
\end{enumerate}
\end{definition}

As in Theorem \ref{blp}, it is convenient to phrase David's condition for $0$-cubes because that is how we will use it, although it makes sense for cubes of all sizes. Note that, given the definition of our cubes in Subsection \ref{dyadiccubes}, a space may contain no $0$-cubes at all, but one can always create some by rescaling the space and relabeling the levels of the cubes.

In essence, David's condition says the following. At every location and scale within $I_0$, if the map $z$ does not collapse the measure of a ball too much, then one of two things must happen: either (\ref{containsball}) the image of this ball contains a ball of definite size (centered at the image of its center), or (\ref{expandscube}) some sub-cube of this ball is expanded by a larger factor than the ball itself. The upshot of (\ref{containsball}) is that the map $z$ does not ``fold'' at this location and scale.

To take a concrete example, suppose $I_0=[0,1]^2\subset \mathbb{R}^2$ and $z$ is the map
$$z(x,y) = \left(\left|x-\frac{1}{2}\right|,y\right),$$
which folds the square in half along its central vertical axis. If $T$ is well away from the folding line $\{x=\frac{1}{2}\}$, then $z$ essentially acts isometrically on $T$ and so condition (\ref{containsball}) of David's condition holds. If $T$ is centered on the folding line, then $|z(T)|/|T|=1/2$ and (\ref{containsball}) fails, but some sub-square $R$ of $T$ to the left or right of the folding line satisfies $|z(R)|/|R| = 1$, so (\ref{expandscube}) holds.

Theorem 10.1 of \cite{Se00}, which is a generalization of Theorem 1 of \cite{Da88}, says the following.

\begin{thm}[\cite{Se00}, Theorem 10.1]\label{davidtheorem}
Let $(X,d)$ be an Ahlfors $s$-regular metric space with a system of dyadic cubes as above. Let $(Y,\rho)$ be an arbitrary metric space. Let $I_0$ be a $0$-cube in $X$, and $z:I_0\rightarrow Y$ be a Lipschitz map. Suppose that $z$ satisfies David's condition on $I_0$. 

Then for every $\epsilon>0$, there are measurable subsets $E_1, \dots, E_l \subset I_0$, such that $z|_{E_i}$ is $M$-bi-Lipschitz, and
$$ \left|z\left(I_0 \setminus \bigcup_{i=1}^l  E_i\right)\right| < \epsilon \left|I_0\right|.$$

The constants $l$ and $M$ depend only on $\epsilon$, the constants associated to the Ahlfors-regularity of the space $X$, the Lipschitz constant of $z$, and the numbers $\Lambda$ and $\eta$ from David's condition (for $\lambda=1$ and $\gamma$ depending only on $\epsilon$ and the Lipschitz constant of $z$.)
\end{thm}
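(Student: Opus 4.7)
The plan is to prove Theorem \ref{davidtheorem} by a stopping-time argument on the dyadic tree below $I_0$, iterating David's dichotomy until one is left with a collection of cubes on which $z$ is forced to be bi-Lipschitz. Write $L$ for the Lipschitz constant of $z$, fix a small $\gamma = \gamma(\epsilon, L) > 0$ to be chosen, and let $\Lambda, \eta > 0$ be the constants produced by David's condition applied with $\lambda = 1$ and this $\gamma$.

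First I would peel off the \emph{light} cubes, i.e.\ those $Q \subseteq I_0$ with $|z(Q)| < \gamma |Q|$, by selecting a maximal disjoint family $\mathcal{G}_0$ of such. These contribute at most
\[
\Bigl| z\Bigl(\bigcup_{Q \in \mathcal{G}_0} Q\Bigr) \Bigr| \;\le\; \sum_{Q \in \mathcal{G}_0} |z(Q)| \;\le\; \gamma \sum_{Q \in \mathcal{G}_0} |Q| \;\le\; \gamma |I_0|
\]
to the image measure. For every non-light cube one is in the setting of David's condition, so either (i) holds or (ii) produces a sub-cube $R$ on which the density $|z(R)|/|R|$ jumps by a factor $(1+2\eta)$. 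Since $z$ is $L$-Lipschitz one has the a priori bound $|z(R)|/|R| \le L^s$ coming from $\mathcal{H}^s(z(R)) \le L^s \mathcal{H}^s(R)$, so option (ii) can be invoked at most $N := \lceil \log(L^s/\gamma)/\log(1+2\eta)\rceil$ times along any chain of non-light cubes. Repeating the light-cube stopping construction inside each (ii)-cube and iterating $N$ times, the total lost image measure is bounded by $(N+1)\gamma|I_0|$; taking $\gamma$ small enough in terms of $\epsilon$ and $L$ makes this less than $\epsilon|I_0|$ and leaves a family of ``good'' cubes on which option (i) of David's condition holds at every descendant scale.

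The remaining step is to extract the bi-Lipschitz pieces from the good set $G$ of points $x$ at which (i) holds at all relevant scales. The Lipschitz upper bound is automatic; the lower bound is the heart of the matter. At each such $x$ and each scale $2^j$, condition (i) forces $z(B(x, \Lambda 2^j)) \supseteq B(z(x), 2^j)$, so $z$ cannot ``fold'' significantly at that scale, and the non-light condition $|z(Q)| \ge \gamma|Q|$ further prevents mass cancellation. A countable Vitali-type selection then produces finitely many measurable $E_1,\dots,E_l \subseteq G$ on which this scale-by-scale non-folding translates into a uniform bi-Lipschitz constant $M = M(L, \Lambda, \eta, \gamma, C_0)$: two points at $X$-distance $\asymp 2^j$ inside the same $E_i$ must have $Y$-distance $\gtrsim 2^j/M$, because otherwise their two neighborhoods, whose images each contain a ball of radius $2^j$ about the respective image, would overlap in a way that forces ``excess'' preimages and violates the non-light lower bound together with Ahlfors regularity. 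The main obstacle is exactly this last passage from scale-by-scale covering to a uniform bi-Lipschitz lower bound; the cleanest route, following Semmes, is a nested-neighborhood iteration that tracks persistent separation of images across all dyadic scales and then pigeonholes the good points into a bounded number of pieces, with the bound on $l$ coming from the number of choices made in the stopping-time and selection procedures.
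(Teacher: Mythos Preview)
The paper does not prove Theorem \ref{davidtheorem} at all: it is quoted verbatim as Theorem 10.1 of \cite{Se00} and used as a black box. So there is no ``paper's own proof'' to compare your sketch against; what you have written is an attempt to reproduce the David--Semmes argument from \cite{Da88} and \cite{Se00}, not anything in this paper.

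On its own merits your outline is broadly correct in spirit---stopping-time on light cubes, geometric growth of density under option (ii), a priori Lipschitz ceiling $L^s$ on density, and then bi-Lipschitz extraction from the good set---but there are two places where it is not yet a proof. First, your bound $(N+1)\gamma|I_0|$ on the garbage image is problematic as written, because $N$ depends on $\eta$, and $\eta$ is produced by David's condition \emph{after} $\gamma$ is chosen; the theorem statement demands that $\gamma$ depend only on $\epsilon$ and $L$, so you cannot simply shrink $\gamma$ to beat $N$. In the actual David--Semmes argument the garbage is organized so that its image measure is controlled by a single factor of $\gamma$ (the maximal light cubes are disjoint in $I_0$), and $N$ enters only in the constants $l$ and $M$, not in the image-measure loss. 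Second, you yourself flag the passage from ``(i) holds at every scale'' to a uniform lower Lipschitz bound as the main obstacle, and your heuristic about overlapping image-balls forcing excess preimages is not yet an argument; this step genuinely requires the careful coding/separation scheme in \cite{Se00}, Section 10 (or \cite{Da88}), and is where most of the work lies.
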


We will apply Theorem \ref{davidtheorem} and a modification of the proof of Theorem 2 of \cite{Da88} to prove Theorem \ref{blp}. It is worth noting that, in Theorem \ref{blp}, the condition that $Y$ is Ahlfors $s$-regular can be relaxed to the condition that $Y$ is doubling and satisfies the upper mass bound
$$ \mathcal{H}^s(\overline{B}_Y(x,r)) \leq C_0 r^s. $$
It is only this half of the Ahlfors regularity of $Y$ that is used in the proof.

On the other hand, the fact that $X$ and $Y$ are have the same topological dimension $d$ is crucial in the setting of Theorem \ref{blp}. In Proposition \ref{spacefilling} below, we will give a counterexample to Theorem \ref{blp} in which $X$ and $Y$ satisfy all the assumptions of the theorem, except that they are manifolds of different topological dimensions.

A few further remarks on the statement of Theorem \ref{blp} are in order.

\begin{rmk}\label{dependence}
That Theorem \ref{blp} gives dependence of constants on the space $Y$ (and not just its data) is a consequence of our compactness style of proof. However, the proof of Theorem \ref{blp} can be modified slightly to reduce the dependence on $Y$ in the following manner. Let $Y$ be a complete, oriented $d$-manifold that is LLC, Ahlfors $s$-regular, and has $d$-manifold weak tangents. Suppose that $Y'$ is LLC, Ahlfors $s$-regular, and is $\eta$-quasisymmetric to $Y$, by a quasisymmetry that maps balls in $Y'$ of radius $1$ to sets of uniformly bounded diameter. Then Theorem \ref{blp} holds for maps $z:X\rightarrow Y'$ with constants depending only on the space $Y$, the data of $Y'$, and the quasisymmetry function $\eta$ (as well as the data of $z$ and $X$).

In particular, if $\xi\geq \xi_0>0$, then the theorem holds for target space $Y' = (Y,\xi\rho)$ with $l,M$ depending only on $Y$ and $\xi_0$ (as well as on $\epsilon$ and the data of $X$ and $z$), and not on $\xi$ itself. That is because this rescaling is quasisymmetric (with $\eta(t)=t$) and does not alter the data of $Y$, other than changing the contractibility radius $r_0$ to $r_0/\xi_0$.
\end{rmk}

\begin{rmk}\label{allcubes} 
We have phrased Theorem \ref{blp} for $0$-cubes to parallel Theorem 2 of \cite{Da88}. However, it is easy to see that the following statement also holds:

Suppose $j_1< j_0$, $Q_0$ is a dyadic $j$-cube in $X$, $j\leq j_1$, and $z:Q_0\rightarrow Y$ is Lipschitz.
Then the conclusion of Theorem \ref{blp} holds for $z$ on $Q_0$, i.e. for every $\epsilon>0$, there are measurable subsets $E_1, \dots, E_l \subset Q_0$, such that $z|_{E_i}$ is $M$-bi-Lipschitz, and
$$ \left|z\left(I_0 \setminus \bigcup_{i=1}^l  E_i\right)\right| < \epsilon \left|I_0\right|$$
Here $l$ and $M$ depend only on $\epsilon$, the Lipschitz constant of $z$, $j_1$, the space $Y$, and the data of $X$.

Indeed, if $Q_0$ is an $j$-cube for $j\leq j_1$, one need only apply Theorem \ref{blp} to the rescaled spaces $(X, 2^{-j} d)$ and $(Y, 2^{-j} \rho)$, and the same Lipschitz map $z$, relabeling the cubes so that $Q_0$ is a $0$-cube. The rescaled spaces $(X, 2^{-j} d)$ and $(Y, 2^{-j}\rho)$ have the same data as $X$ and $Y$, except that their LLC radii $r_0$ must be replaced by $2^{-j_1}r_0$. So we can apply Theorem \ref{blp} and Remark \ref{dependence} to obtain this result.
\end{rmk}

David proved Theorem \ref{blp} in the case $X=Y=\mathbb{R}^d$ (see \cite{Da88}, Theorem 2). In doing so, he used a compactness argument to verify a modified version of what we have called David's condition. The general idea is the following: Consider a sequence of counterexample maps $z_k$, which in the case of $\mathbb{R}^d$ may all be defined on the unit cube, that fail both conditions of Definition \ref{davidscondition} with increasingly worse constants as $k\rightarrow\infty$. Extract a sub-limit $z$, and by a careful argument show that $z$ has constant Jacobian. Because $z$ is in addition Lipschitz, it is a quasi-regular mapping, and a theorem of Reshetnyak implies that it is an open mapping. A degree argument then shows that, for $k$ large, the image of the maps $z_k$ must contain a fixed size ball around $z_k(0)$, with radius independent of $k$. For $k$ large, this contradicts the assumption that the maps $z_k$ fail the first condition of Definition \ref{davidscondition}.

In our setting, we follow a similar approach. The compactness argument of \cite{Da88} is modified to be a Gromov-Hausdorff compactness argument; to make the degree theory work in this setting we require some results on the Gromov-Hausdorff limits of locally contractible generalized manifolds: see Section \ref{maintools} below. In addition, the theory of quasi-regular mappings and the result of Reshetnyak are not available to us. They are replaced by a topological theorem of Bonk and Kleiner (Theorem \ref{bm} below) on mappings of bounded multiplicity.

A completely different method for verifying David's condition in some situations is a type of detailed homotopy argument, as in \cite{DS00}, Chapter 9. This approach allows for much weaker topological assumptions on $X$, but it seems to rely on having $s=d$, $Y=\mathbb{R}^d$, and $X$ embedded in some Euclidean space.

Even under the assumptions $s=d$ and $Y=\mathbb{R}^d$, Theorem \ref{blp} appears to be new if $X$ is not a subset of some Euclidean space. This observation has a consequence for the geometry of the space $X$. The following concept has many definitions, but the one we give is most natural for abstract metric spaces.

\begin{definition}
An Ahlfors $d$-regular space $X$ is called \textit{uniformly rectifiable} if there exist constants $\alpha>1$ and $0<\beta\leq 1$ such that for every open ball $B$ in $X$, there is a subset $E\subset B$ with $|E|\geq \beta |B|$ and an $\alpha$-bi-Lipschitz map $f:E \rightarrow \mathbb{R}^d$.

We will call $X$ \textit{locally uniformly rectifiable} if for every $r>0$, there exist constants $\alpha$ and $\beta$, depending on $r$, such that for every open ball $B$ in $X$ of radius less than $r$, there is a subset $E\subset B$ with $|E|\geq \beta |B|$ and an $\alpha$-bi-Lipschitz map $f:E \rightarrow \mathbb{R}^d$.
\end{definition}

We can apply Theorem \ref{blp} and a theorem of Semmes \cite{Se96} to show that some abstract manifolds are uniformly rectifiable. Note that in this case we require that the Ahlfors regularity dimension and the topological dimension of $X$ coincide. Snowflaked metric spaces such as $(\mathbb{R}^n, |\cdot|^{1/2})$ provide counterexamples in the absence of this assumption.

\begin{thm}\label{rect}
An Ahlfors $d$-regular, LLC, complete, oriented topological $d$-manifold is locally uniformly rectifiable. The local uniform rectifiability constants $\alpha$ and $\beta$ depend on the scale $r$ and otherwise only on the data of the space.

In particular, a compact, Ahlfors $d$-regular, LLC, oriented topological $d$-manifold is uniformly rectifiable, with constants depending only on $d, C_0, L,$ and $r_0/\diam(X)$.
\end{thm}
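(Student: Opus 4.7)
The plan is to combine Theorem \ref{blp}, with target space $\mathbb{R}^d$, with a result of Semmes from \cite{Se96} that, under the hypotheses of Theorem \ref{rect}, produces for each ball $B\subset X$ a Lipschitz map $B\to \mathbb{R}^d$ whose image has measure comparable to $|B|$. Theorem \ref{blp} will then upgrade this map to one that is bi-Lipschitz on a subset of $B$ of definite size. First one checks that $\mathbb{R}^d$ is a valid target space for Theorem \ref{blp}: it is complete, oriented, Ahlfors $d$-regular, LLC (with $L=1$ and $r_0=\infty$), a topological $d$-manifold, and has $d$-manifold weak tangents, since every pointed rescaling of $\mathbb{R}^d$ is isometric to $(\mathbb{R}^d,|\cdot|,0)$ as noted in Remark \ref{manifoldtangentexamples}.

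Now fix a ball $B=B(x,\rho)\subset X$ with $\rho<r$. Choose a dyadic cube $Q\subset B$ with $|Q|$ comparable to $|B|$; this is possible provided $\rho$ is small relative to $r_0$, by the quasi-roundness property stated in Subsection \ref{dyadiccubes}. By Semmes' theorem \cite{Se96}, there is an $L_0$-Lipschitz map $z:Q\to\mathbb{R}^d$ with $|z(Q)|_{\mathbb{R}^d}\geq c_0|Q|$, where $L_0$ and $c_0$ depend only on the data of $X$ and on the scale $r$. Apply Theorem \ref{blp}, in the cube version from Remark \ref{allcubes}, to $z$ with $\epsilon=c_0/2$ to obtain measurable sets $E_1,\dots,E_l\subset Q$ with each $z|_{E_i}$ being $M$-bi-Lipschitz and $|z(Q\setminus\bigcup_i E_i)|_{\mathbb{R}^d}<(c_0/2)|Q|$, so that $|z(\bigcup_i E_i)|_{\mathbb{R}^d}\geq (c_0/2)|Q|$. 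The Lipschitz bound on $z$ then gives $\sum_i|E_i|\geq (c_0/(2L_0^d))|Q|$, and pigeonhole produces some $E_i$ with $|E_i|\geq\beta|B|$, where $\beta$ depends only on the data of $X$ and on $r$. The restriction $z|_{E_i}$ is the desired $M$-bi-Lipschitz embedding of $E_i\subset B$ into $\mathbb{R}^d$.

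For the compact case, rescaling $X$ so that $\diam X\in[1,2)$ turns $X$ itself into a dyadic $0$-cube, and the same argument applied to $X=I_0$ directly yields the uniform rectifiability, with constants controlled by $d$, $C_0$, $L$, and $r_0/\diam X$ through the scale dependence of the previous step. The main non-trivial input is Semmes' theorem \cite{Se96}, which is where the orientation and LLC structure of $X$ enter in an essential way; once that initial Lipschitz map with image of definite measure is available, Theorem \ref{blp} does all the remaining work of extracting a bi-Lipschitz piece. The only subtlety beyond quoting these two results is keeping track of which constants depend on $r$ versus only on the data of $X$, but this is bookkeeping rather than a serious obstacle.
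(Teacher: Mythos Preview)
Your approach is essentially the same as the paper's: invoke Semmes' result from \cite{Se96} to produce, for each ball, a Lipschitz map into $\mathbb{R}^d$ with image of definite measure, and then feed this map into Theorem \ref{blp} (via Remark \ref{allcubes}) to extract a bi-Lipschitz piece. The paper is somewhat more explicit about the Semmes input: it quotes Theorem 1.29(a) of \cite{Se96}, which gives a surjective $Cr^{-1}$-Lipschitz map $X\to\mathbb{S}^d$ constant outside the ball, and then post-composes with the coordinate projection $\mathbb{S}^d\to\mathbb{R}^d$ and rescales to obtain a $1$-Lipschitz map with $|h(Q)|\geq\delta|Q|$; but this is exactly what you are packaging as ``Semmes' theorem.'' Your pigeonhole extraction of a single $E_i$ is spelled out slightly more than in the paper, which just says ``by choosing $\epsilon$ sufficiently small,'' but the content is the same.
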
 

If $X$ admits a bi-Lipschitz embedding into some Euclidean space, then Theorem \ref{rect} follows from work of David and Semmes in \cite{DS00}. However, examples of Semmes \cite{Se96_bilip} and Laakso \cite{La02} show that such an embedding need not always exist.

\subsection*{Acknowledgments}
The author wishes to thank Mario Bonk for a vast number of illuminating conversations, on the contents of this paper and on mathematics in general. He is also grateful for helpful discussions with Peter Petersen, Raanan Schul, and Matthew Badger.

\section{The main tools}\label{maintools}

In this section, we introduce the main concepts and results used in the proof of Theorem \ref{blp}.

\subsection{Convergence of metric spaces}
We will use the notion of convergence of ``mapping packages'', a version of Gromov-Hausdorff convergence, that is described in Chapter 8 of \cite{DS97}. All material in this sub-section is from that source. A brief exposition of this material is also given in \cite{Ke03}.

While the notation in this set-up is a bit more cumbersome than for other definitions of Gromov-Hausdorff convergence, the detailed results of \cite{DS97} make it very flexible for discussing simultaneous convergence of metric spaces and mappings. 

\begin{definition}\label{sets}
We say that a sequence $\{F_j\}$ of non-empty closed subsets of some Euclidean space $\mathbb{R}^N$ converges to a non-empty closed set $F\subseteq\mathbb{R}^N$ if
$$ \lim_{j\rightarrow\infty} \sup_{x\in F_j\cap B(0,R)} \dist(x,F) = 0 $$
and
$$  \lim_{j\rightarrow\infty} \sup_{y\in F\cap B(0,R)} \dist(y,F_j) = 0 $$
for all $R>0$. 
\end{definition}

This convergence is stable under taking products, in the sense that if $\{F_j\}$ converges to $F$ in $\mathbb{R}^N$ and $\{G_j\}$ converges to $G$ in $\mathbb{R}^M$, then
$\{F_j\times G_j\}$ converges to $F\times G$ in $\mathbb{R}^{N+M}$.

\begin{definition}\label{maps}
Suppose $\{F_j\}$ is a sequence of closed sets converging to a closed set $F$ in $\mathbb{R}^N$ as in the previous definition. Let $Y$ be a metric space and $\phi_j:F_j\rightarrow Y$, $\phi:F\rightarrow Y$ be mappings. We say that $\{\phi_j\}$ converges to $\phi$ if for each sequence $\{x_j\}$ in $\mathbb{R}^N$ such that $x_j\in F_j$ for all $j$ and $x_j\rightarrow x\in F$, we have that
$$ \lim_{j\rightarrow\infty} \phi_j(x_j) = \phi(x). $$
\end{definition}

A \textit{pointed metric space} is a triple $(X, d, p)$, where $(X,d)$ is a metric space and $p$ is a point in $X$. All metric spaces that we consider are complete and doubling.

\begin{definition}\label{spaceconvergence}
A sequence of pointed metric spaces $\{(X_j, d_j, p_j)\}$ converges to a pointed metric space $(X,d,p)$ if the following conditions hold. There exists $\alpha\in(0,1]$, $N\in\mathbb{N}$, and $L$-bi-Lipschitz embeddings $f_j:(X_j, d_j^\alpha)\rightarrow\mathbb{R}^N$, $f:(X,d^\alpha)\rightarrow\mathbb{R}^N$ with $f_j(p_j) = f(p) = 0$ for all $j$. Furthermore, we require that $f_j(X_j)$ converge to $f(X)$ in the sense of Definition \ref{sets}, and that the real-valued functions $d_j(f_j^{-1}(x), f_j^{-1}(y))$ defined on $f_j(X_j) \times f_j(X_j)$ converge to $d(f^{-1}(x), f^{-1}(y))$ on $f(X) \times f(X)$ in the sense of Definition \ref{maps}.
\end{definition}

We only use Definition \ref{spaceconvergence} when the metric spaces $\{(X_j, d_j)\}$ and $(X,d)$ are uniformly doubling. In that case, the embeddings $f_j$ and $f$ can always be found, by Assouad's embedding theorem (see \cite{He01}, Theorem 12.2).

\begin{definition}\label{mappingpackage}
A mapping package consists of a pair of pointed metric spaces $(M, d_M, p)$ and $(N, d_N, q)$ as well as a mapping $g:M\rightarrow N$ such that $g(p)=q$.
\end{definition}

\begin{definition}\label{mappingpackageconvergence}
A sequence of mapping packages $\{((X_j, d_j, p_j), (Y_j, \rho_j, q_j), h_j)\}$ is said to converge to another mapping package $((X, d, p), (Y, \rho, q), h)$ if the following conditions hold. The sequences $\{(X_j, d_j, p_j)\}$ and $\{(Y_j, \rho_j, q_j\}$ converge to $(X,d,p)$ and $(Y,\rho, q)$, respectively, in the sense of Definition \ref{spaceconvergence}. Furthermore, the maps $g_j \circ h_j \circ f_j^{-1}$ converge to $g \circ h \circ f^{-1}$ in the sense of Definition \ref{maps}, where $f_j, g_j, f, g$ are the embeddings of Definition \ref{spaceconvergence}.
\end{definition}

The following proposition is a special case of Lemma 8.22 of \cite{DS97}.

\begin{prop}\label{sublimit}
Let $\{((X_j, d_j, p_j), (Y_j, \rho_j, q_j), h_j)\}$ be a sequence of mapping packages, in which all the metric spaces are complete and uniformly doubling, and in which the maps $h_j$ are uniformly Lipschitz and satisfy $h_j(p_j)=q_j$. Then there exists a mapping package $((X, d, p), (Y, \rho, q), h)$ that is the limit of a subsequence of $\{((X_j, d_j, p_j), (Y_j, \rho_j, q_j), h_j)\}$.
\end{prop}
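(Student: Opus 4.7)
The plan is to reduce everything to compactness statements for closed sets and continuous functions in a fixed Euclidean space, using Assouad's embedding theorem as the bridge. Since the metric spaces $(X_j,d_j)$ are uniformly doubling, Assouad's theorem produces a single snowflake exponent $\alpha\in(0,1]$, a dimension $N$, and a bi-Lipschitz constant $L$ such that for every $j$ there is an $L$-bi-Lipschitz embedding $f_j\colon(X_j,d_j^\alpha)\to\mathbb{R}^N$; by translating we may arrange $f_j(p_j)=0$. Similarly we obtain $g_j\colon(Y_j,\rho_j^\alpha)\to\mathbb{R}^M$ with $g_j(q_j)=0$, for a common $M$, $L$, $\alpha$ (after shrinking $\alpha$ and enlarging $L$ if necessary to match). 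So after this embedding step, the entire problem takes place inside the fixed Euclidean spaces $\mathbb{R}^N$ and $\mathbb{R}^M$.

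First I would extract the limit spaces. The sets $F_j:=f_j(X_j)\subset\mathbb{R}^N$ are closed and all contain the origin, and because $f_j$ is $L$-bi-Lipschitz on the doubling snowflake, the traces $F_j\cap B(0,R)$ are uniformly totally bounded for each $R$. A standard Blaschke-type diagonal argument (extract along a sequence of radii $R_k\to\infty$, using that Hausdorff limits of closed subsets of a compact set exist along subsequences) yields a subsequence and a non-empty closed $F\ni 0$ in $\mathbb{R}^N$ with $F_j\to F$ in the sense of Definition \ref{sets}. The distance functions $D_j(x,y):=d_j(f_j^{-1}(x),f_j^{-1}(y))$ on $F_j\times F_j$ are $L$-bi-Hölder controlled by $|x-y|^{1/\alpha}$, so they form a uniformly equicontinuous family; extending them to $\mathbb{R}^N\times\mathbb{R}^N$ by McShane extension and passing to another subsequence via Arzelà--Ascoli produces a continuous function $D$ on $F\times F$. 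One checks that $D$ inherits the metric axioms in the limit and that $(F,D)$ is complete and doubling, with $F\hookrightarrow\mathbb{R}^N$ an $L$-bi-Lipschitz embedding of $(F,D^\alpha)$; this furnishes the candidate pointed space $(X,d,p):=(F,D,0)$. The convergence $D_j\to D$ in the sense of Definition \ref{maps} is automatic from uniform equicontinuity plus pointwise convergence on a dense set. The identical procedure, on a further subsequence, produces $(Y,\rho,q):=(G,E,0)$ with $g_j(Y_j)\to G$.

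Next I would pass to the maps. The composed maps $\phi_j:=g_j\circ h_j\circ f_j^{-1}\colon F_j\to G_j$ send $0$ to $0$, and since $h_j$ is $C$-Lipschitz and $f_j$, $g_j$ are $L$-bi-Lipschitz on the snowflaked metrics, each $\phi_j$ is Hölder continuous with exponent $\alpha$ and constant depending only on $C$, $L$: indeed $|\phi_j(x)-\phi_j(y)|\le L\rho_j(h_jf_j^{-1}(x),h_jf_j^{-1}(y))^\alpha\le L(Cd_j(f_j^{-1}(x),f_j^{-1}(y)))^\alpha\le LC^\alpha L^\alpha|x-y|^\alpha$. After extending each $\phi_j$ from $F_j$ to all of $\mathbb{R}^N$ with the same Hölder constant (McShane), another application of Arzelà--Ascoli yields a subsequence converging locally uniformly on $\mathbb{R}^N$ to a Hölder continuous map $\Phi\colon\mathbb{R}^N\to\mathbb{R}^M$. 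The limit $\Phi$ sends $F$ into $G$ (this follows from the set convergence $F_j\to F$, $G_j\to G$), so defining $h:=g^{-1}\circ\Phi|_F\circ f\colon X\to Y$ with $f,g$ the inclusion embeddings completes the package, and $h(p)=q$ from $\phi_j(0)=0$.

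The main obstacle, in my view, is not any single one of these steps individually but stitching them together so that the snowflake exponent, the embedding dimension, the bi-Lipschitz and Lipschitz constants, and hence the Arzelà--Ascoli and Blaschke compactness, all remain uniform in $j$ throughout a single diagonal extraction. The key leverage is the uniform doubling hypothesis: it is what lets Assouad's theorem fix $(\alpha,N,L)$ once and for all, and it is what makes the families of sets $\{F_j\cap B(0,R)\}$ and of distance functions $\{D_j\}$ precompact. One should also verify, in the end, that the chosen inclusions $f\colon X\hookrightarrow\mathbb{R}^N$ and $g\colon Y\hookrightarrow\mathbb{R}^M$ satisfy the bi-Lipschitz bound required by Definition \ref{spaceconvergence}; this is a straightforward limiting argument from the uniform bi-Lipschitz bounds on $f_j,g_j$ together with the convergence of the distance functions.
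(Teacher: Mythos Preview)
Your argument is correct and is essentially the standard proof; the paper itself does not prove this proposition but simply cites it as a special case of Lemma 8.22 in \cite{DS97}, whose proof proceeds along exactly the lines you describe (uniform Assouad embeddings, Blaschke-type compactness for the image sets, and Arzel\`a--Ascoli for the distance functions and the composed maps).

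One small arithmetic slip worth noting: in your H\"older estimate for $\phi_j$, the last inequality should read
\[
L\,C^\alpha\,(d_j(f_j^{-1}(x),f_j^{-1}(y)))^\alpha \le L\,C^\alpha\cdot L\,|x-y| = L^2 C^\alpha |x-y|,
\]
since $d_j(\cdot,\cdot)^\alpha$ is $L$-bi-Lipschitz equivalent to the Euclidean distance under $f_j$. So $\phi_j$ is in fact uniformly \emph{Lipschitz}, not merely $\alpha$-H\"older; this only strengthens your equicontinuity and does not affect the argument. Similarly, the distance functions $D_j$ have modulus of continuity $t\mapsto 2(Lt)^{1/\alpha}$, which is not concave when $\alpha<1$, so a literal McShane extension is not quite the right tool there; but any of the standard devices for passing to limits of equicontinuous functions on converging domains (as in \cite{DS97}, Section 8) handles this without difficulty.
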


We will now describe some consequences of the convergence of a sequence of mapping packages, which are Lemmas 8.11 and 8.19 of \cite{DS97}.

\begin{prop}\label{almostisos1}
Suppose a sequence of pointed metric spaces $\{(X_k, d_k, p_k)\}$ converges to the pointed metric space $(X,d,p)$, in the sense of Definition \ref{spaceconvergence}.

Then there exist (not necessarily continuous) mappings $\phi_k:X\rightarrow X_k$ and $\psi_k: X_k \rightarrow X$ such that:
\begin{itemize}
\item For all $k$, $\phi_k(p) = p_k$ and $\psi_k(p_k) = p$.

\item For all $R>0$,
$$ \lim_{k\rightarrow\infty} \sup\{d_X(\psi_k(\phi_k(x), x) : x\in B_X(p, R) \} = 0$$
and
$$ \lim_{k\rightarrow\infty} \sup\{d_{X_k}(\phi_k(\psi_k(x), x) : x\in B_{X_k}(p_k, R) \} = 0.$$

\item For all $R>0$,
$$ \lim_{k\rightarrow\infty} \sup\{|d_{X_k}(\phi_k(x) , \phi_k(y)) - d_X(x,y)| : x,y\in B_X(p, R) \} = 0$$
and
$$ \lim_{k\rightarrow\infty} \sup\{|d_{X}(\psi_k(x) , \psi_k(y)) - d_{X_k}(x,y)| : x,y\in B_{X_k}(p, R) \} = 0.$$
\end{itemize}
\end{prop}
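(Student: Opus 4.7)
The plan is to build $\phi_k$ and $\psi_k$ directly as nearest-point maps through the common Euclidean embeddings supplied by Definition \ref{spaceconvergence}, then read off each bullet from the Hausdorff convergence of the embedded images combined with the convergence of the distance functions. Fix $\alpha \in (0,1]$, $N$, and $L$-bi-Lipschitz embeddings $f: (X, d^\alpha) \rightarrow \mathbb{R}^N$, $f_k: (X_k, d_k^\alpha) \rightarrow \mathbb{R}^N$ with $f(p) = f_k(p_k) = 0$. Set $\phi_k(p) := p_k$ and $\psi_k(p_k) := p$; for any other $x \in X$ pick $\phi_k(x) \in X_k$ with
$$ |f_k(\phi_k(x)) - f(x)| \leq 2\, \dist(f(x), f_k(X_k)), $$
and define $\psi_k$ symmetrically. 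The first bullet then holds by construction.

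Next I would quantify the Hausdorff convergence by
$$ \epsilon_k(R') := \sup\{\dist(y, f_k(X_k)) : y \in f(X) \cap \overline{B}_{\mathbb{R}^N}(0, R')\} + \sup\{\dist(z, f(X)) : z \in f_k(X_k) \cap \overline{B}_{\mathbb{R}^N}(0, R')\}, $$
which tends to $0$ as $k\rightarrow\infty$ for each fixed $R'$, by Definition \ref{sets}. For $x \in B_X(p, R)$ the bi-Lipschitz estimate on $f$ gives $|f(x)| \leq L R^\alpha$, so $|f_k(\phi_k(x)) - f(x)| \leq 2\epsilon_k(LR^\alpha)$, and then $|f_k(\phi_k(x))|$ is confined to a fixed Euclidean ball $\overline{B}_{\mathbb{R}^N}(0, R'')$ independent of $k$. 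The construction of $\psi_k$ then yields $|f(\psi_k(\phi_k(x))) - f_k(\phi_k(x))| \leq 2\epsilon_k(R'')$, hence $|f(\psi_k(\phi_k(x))) - f(x)| \rightarrow 0$ uniformly on $B_X(p, R)$. Applying $L^{-1} d(\cdot,\cdot)^\alpha \leq |f(\cdot) - f(\cdot)|$ converts this to uniform convergence of $d(\psi_k(\phi_k(x)), x)$ to $0$, giving the second bullet; the statement for $\phi_k \circ \psi_k$ is symmetric.

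For the third bullet, fix $x, y \in B_X(p, R)$ and set $u_k := f_k(\phi_k(x))$, $v_k := f_k(\phi_k(y))$. The previous step gives $u_k \rightarrow f(x)$ and $v_k \rightarrow f(y)$ in $\mathbb{R}^N$, with $(u_k, v_k)$ remaining in a fixed Euclidean ball. The convergence of the distance functions built into Definition \ref{spaceconvergence} (via Definition \ref{maps}) then yields
$$ d_k(\phi_k(x), \phi_k(y)) = d_k(f_k^{-1}(u_k), f_k^{-1}(v_k)) \rightarrow d(x, y). $$
To upgrade this from a pointwise to a uniform statement on $B_X(p, R)$, I would use that in the complete doubling space $X$ the closed ball $\overline{B}_X(p, R)$ is compact, together with the uniform H\"older bound $d_k(f_k^{-1}(u), f_k^{-1}(v)) \leq L^{1/\alpha} |u - v|^{1/\alpha}$, in a standard Arzel\`a--Ascoli style argument. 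The bullet for $\psi_k$ is identical. The main technical hurdle is precisely this uniformization step, which is where the uniform doubling hypothesis (ensuring compactness of closed balls) plays its essential role; everything else is essentially a bookkeeping translation of the $\mathbb{R}^N$-side convergence through the bi-Lipschitz embeddings.
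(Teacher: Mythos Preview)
Your argument is correct. The paper does not prove this proposition at all; it simply quotes it as Lemmas 8.11 and 8.19 of \cite{DS97}, so your nearest-point construction through the common Euclidean embeddings is precisely the sort of argument that lies behind the citation. One small point worth making explicit in your write-up: for the $\psi_k$ half of the third bullet the domain $B_{X_k}(p_k,R)$ varies with $k$, so the compactness/subsequence argument should be run in $\mathbb{R}^N$ (the points $f_k(x_k),f_k(y_k)$ lie in a fixed Euclidean ball, and any subsequential limit lands in $f(X)$ because $f(X)$ is closed and $\dist(f_k(x_k),f(X))\rightarrow 0$), after which Definition~\ref{maps} and continuity of $d\circ(f^{-1}\times f^{-1})$ finish the job exactly as you describe.
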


\begin{prop}\label{almostisos}
Suppose a sequence of mappings packages $\{((X_k, d_k, p_k), (Y_k, \rho_k, q_k), h_k)\}$ converges to a mapping package  $((X, d, p), (Y,\rho,q),h)$, where the mappings $h_k$ are uniformly Lipschitz and satisfy $h_k(p_k)=q_k$. Then there exist (not necessarily continuous) mappings $\phi_k:X\rightarrow X_k$ and $\psi_k: X_k \rightarrow X$ satisfying exactly the conditions of Proposition \ref{almostisos1}, and mappings $\sigma_k:Y\rightarrow Y_k$ and $\tau_k: Y_k \rightarrow Y$ satisfying the analogous properties of Proposition \ref{almostisos1}, such that in addition we have the following:

For all $x\in X$, 
$$\lim_{k\rightarrow\infty} \tau_k(h_k(\phi_k(x))) = h(x)$$
and this convergence is uniform on bounded subsets of $X$.
\end{prop}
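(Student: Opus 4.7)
The plan is to reduce everything to Proposition \ref{almostisos1} (applied twice) together with the defining property of mapping-package convergence. First, I would apply Proposition \ref{almostisos1} to the convergence $(X_k, d_k, p_k) \to (X,d,p)$ to produce $\phi_k, \psi_k$, and separately to $(Y_k, \rho_k, q_k) \to (Y,\rho,q)$ to produce $\sigma_k, \tau_k$. The first three bulleted properties are then automatic; the only remaining task is to verify the compatibility $\tau_k \circ h_k \circ \phi_k \to h$.

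The key is to choose these almost-isometries in a manner compatible with the Assouad embeddings $f_k, f: X_k, X \to \mathbb{R}^N$ and $g_k, g: Y_k, Y \to \mathbb{R}^M$ from Definition \ref{spaceconvergence}. Inspecting the construction in the proof of Lemma 8.11 of \cite{DS97}, one may take $\phi_k(x) := f_k^{-1}(\pi_k(f(x)))$ and $\tau_k(y) := g^{-1}(\pi_k'(g_k(y)))$, where $\pi_k$ and $\pi_k'$ denote nearest-point projections onto $f_k(X_k)$ and $g(Y)$ respectively, followed by small corrections so that basepoints map to basepoints. The set-convergence of Definition \ref{sets} then yields
\begin{equation*}
\|f_k(\phi_k(x)) - f(x)\|_{\mathbb{R}^N} \to 0 \quad \text{and} \quad \|g(\tau_k(y_k)) - g_k(y_k)\|_{\mathbb{R}^M} \to 0,
\end{equation*}
uniformly for $x$ or $y_k$ in any fixed Euclidean ball.

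Now fix a bounded set $K \subset X$. The uniform Lipschitz bound on $h_k$ together with $\phi_k(p) = p_k$ confines $h_k(\phi_k(K))$ to a ball $B_{Y_k}(q_k, R')$ whose radius is independent of $k$. For $x \in K$ one has
\begin{equation*}
g\bigl(\tau_k(h_k(\phi_k(x)))\bigr) = g_k\bigl(h_k(\phi_k(x))\bigr) + o(1) = (g_k \circ h_k \circ f_k^{-1})\bigl(f_k(\phi_k(x))\bigr) + o(1),
\end{equation*}
and Definition \ref{mappingpackageconvergence} says $g_k \circ h_k \circ f_k^{-1} \to g \circ h \circ f^{-1}$ in the sense of Definition \ref{maps}. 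Because the $f_k$ and $g_k$ are uniformly bi-Lipschitz snowflake embeddings and the $h_k$ are uniformly Lipschitz, these compositions are in fact uniformly Lipschitz on bounded Euclidean sets; an Arzel\`a--Ascoli argument promotes the pointwise convergence in Definition \ref{maps} to uniform convergence on Euclidean balls. Substituting $f_k(\phi_k(x)) \to f(x)$ uniformly in $x \in K$ gives $g(\tau_k(h_k(\phi_k(x)))) \to g(h(x))$ uniformly, and since $g$ is a bi-Lipschitz embedding of $(Y, \rho^\alpha)$ this upgrades to $\tau_k(h_k(\phi_k(x))) \to h(x)$ uniformly on $K$.

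The main obstacle I anticipate is the first step: the almost-isometries supplied by Proposition \ref{almostisos1} are non-unique, so one must either reproduce their construction in a form visibly compatible with the Euclidean embeddings $f_k, g_k$, or check that any choice emerging from that proposition lies close enough to the nearest-point construction above for the final chase to go through. Once this bookkeeping is arranged, the rest of the argument is a direct unwinding of the definitions.
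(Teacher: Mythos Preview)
The paper does not give its own proof of this proposition: it simply records that Propositions \ref{almostisos1} and \ref{almostisos} are Lemmas 8.11 and 8.19 of \cite{DS97}, respectively. Your proposal is a correct sketch of how that argument goes, and in fact it is essentially the same construction as in \cite{DS97}: one builds $\phi_k$ and $\tau_k$ from nearest-point projections in the ambient Euclidean space of the Assouad embeddings, and then the compatibility $\tau_k\circ h_k\circ\phi_k\to h$ drops out of the definition of mapping-package convergence together with the uniform Lipschitz bound on the conjugated maps $g_k\circ h_k\circ f_k^{-1}$. The ``obstacle'' you flag is exactly the point: Proposition \ref{almostisos1} by itself does not pin down $\phi_k,\tau_k$, so one must go back to the explicit nearest-point construction rather than treat the proposition as a black box.

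One small comment on your write-up: when you invoke Arzel\`a--Ascoli to upgrade the convergence of Definition \ref{maps} to uniform convergence on Euclidean balls, note that the domains $f_k(X_k)$ vary with $k$, so the cleaner way to phrase this step is a direct subsequence-and-contradiction argument using compactness of $\overline{K}$ (valid since $X$ is complete and doubling) together with the pointwise convergence along converging sequences already built into Definition \ref{maps}. The uniform Lipschitz bound $L^2 C^\alpha$ on the conjugated maps is what makes that subsequence argument close.
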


We will be interested in mapping packages in which the mappings $h_k$ are defined only on subsets of the source spaces $X_k$. For this, we need the following fact, which is Lemma 8.32 of \cite{DS97}.

\begin{lemma}\label{subsetconvergence}
Suppose that $\{(X_k, d_k, p_k)\}$ is a sequence of pointed metric spaces that converges to the pointed metric space $\{(X, d, p)\}$ in the sense of Definition \ref{spaceconvergence}. Let $\{F_k\}$ be a sequence of nonempty closed sets with $F_k\subset X_k$ for each $k$. Suppose that 
$$ \sup_k d_k(F_k, p_k) < \infty. $$
Then we can pass to a subsequence to get convergence to a nonempty closed subset $F$ of $X$.
\end{lemma}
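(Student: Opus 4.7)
The plan is to do everything in the common Euclidean ambient space $\mathbb{R}^N$ supplied by Definition \ref{spaceconvergence}, apply a Blaschke-type selection theorem there, and then pull the limit back to $X$. The hypothesis $\sup_k d_k(F_k, p_k) < \infty$ will be used precisely to keep the selected limit nonempty.

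Concretely, let $f_k:(X_k,d_k^\alpha)\to\mathbb{R}^N$ and $f:(X,d^\alpha)\to\mathbb{R}^N$ be the $L$-bi-Lipschitz embeddings from Definition \ref{spaceconvergence}, with $f_k(p_k)=f(p)=0$, and set $F_k^\ast := f_k(F_k)$, a nonempty closed subset of $\mathbb{R}^N$. The first step is a standard compactness fact (implicit throughout Chapter 8 of \cite{DS97}): any sequence of closed subsets of $\mathbb{R}^N$ admits a subsequence that converges in the sense of Definition \ref{sets} to some closed set $F^\ast\subseteq\mathbb{R}^N$. This follows by applying a Hausdorff selection theorem to the (relatively compact) family of closed subsets of each ball $B(0,R)$ and then diagonalizing over $R\in\mathbb{N}$. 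After extracting such a subsequence, I have $F_k^\ast\to F^\ast$ in $\mathbb{R}^N$.

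Next, I need to check that $F^\ast\subseteq f(X)$ and $F^\ast\neq\emptyset$. The inclusion is immediate from $F_k^\ast\subseteq f_k(X_k)$ together with the convergence $f_k(X_k)\to f(X)$: for any $y\in F^\ast\cap B(0,R)$ one can choose $y_k\in F_k^\ast$ with $y_k\to y$, and then $\dist(y_k, f(X))\to 0$ forces $y\in f(X)$, as $f(X)$ is closed. For nonemptiness, the bound $\sup_k d_k(F_k, p_k) \leq D$ lets me pick $x_k\in F_k$ with $d_k(x_k, p_k)\leq 2D$; the bi-Lipschitz property of $f_k$ (applied to $d_k^\alpha$) gives $|f_k(x_k)|\leq L\,(2D)^\alpha$, so $\{f_k(x_k)\}$ stays in a fixed compact ball in $\mathbb{R}^N$. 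Passing to a further subsequence yields $f_k(x_k)\to y_\infty\in\mathbb{R}^N$, and Definition \ref{sets} forces $y_\infty\in F^\ast$. Setting $F := f^{-1}(F^\ast)\subseteq X$, I obtain a nonempty closed subset with $f(F)=F^\ast$, and the convergence $f_k(F_k)=F_k^\ast\to F^\ast=f(F)$ in $\mathbb{R}^N$ is precisely the notion of ``$F_k\to F$'' compatible with Definition \ref{spaceconvergence}.

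The only real obstacle is bookkeeping: one needs to confirm that the package-level notion of ``convergence of closed subsets $F_k\subset X_k$ to $F\subset X$'' really reduces to Definition \ref{sets} applied to their Euclidean images, and that the limit $F^\ast$ produced by selection actually sits inside $f(X)$. The essential quantitative input is the distance bound $\sup_k d_k(F_k,p_k)<\infty$, which prevents the sets from drifting to infinity in the embedded picture and is exactly what salvages nonemptiness in the limit.
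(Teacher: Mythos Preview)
Your argument is correct and is essentially the standard proof; the paper itself does not prove this lemma but simply cites it as Lemma 8.32 of \cite{DS97}. One small point of presentation: Definition \ref{sets} only speaks of convergence to a \emph{nonempty} closed set, so it is cleaner to invoke the distance bound before (or simultaneously with) the Blaschke selection, guaranteeing that each $F_k^\ast$ meets a fixed ball $\overline{B}(0,R_0)$ and hence that the diagonalized limit is nonempty from the outset, rather than first extracting a possibly-empty limit and repairing afterward.
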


We make one final remark in this Subsection, which is Lemma 8.29 of \cite{DS97}.

\begin{lemma}\label{ARlimit}
Let the pointed metric spaces $(X_j, d_j, p_j)$ converge to $(X,d,p)$ in the sense of Definition \ref{spaceconvergence}. Suppose that $(X_j, d_j)$ are Ahlfors $s$-regular, with Ahlfors regularity constant uniformly bounded (see Definition \ref{AR}). Then $(X,d)$ is Ahlfors $s$-regular, with constant controlled by the Ahlfors regularity constants of the spaces $(X_j, d_j)$. 
\end{lemma}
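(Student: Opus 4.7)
The plan is to translate Ahlfors $s$-regularity into a purely combinatorial condition on maximal $\delta$-separated nets, which behaves well under the almost-isometries supplied by Proposition \ref{almostisos1}. Along the way the Hausdorff measure itself plays essentially no role, which avoids having to worry about lower semicontinuity of $\mathcal{H}^s$ under this kind of convergence.

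First I would recall the standard fact that a metric space $(Z,e)$ is Ahlfors $s$-regular with constant $C_0$ if and only if, for every $x\in Z$ and every $0<\delta\leq r\leq \diam Z$, the maximal cardinality $N(\bar B(x,r),\delta)$ of a $\delta$-separated subset of $\bar B(x,r)$ satisfies $c^{-1}(r/\delta)^s \leq N(\bar B(x,r),\delta) \leq c(r/\delta)^s$ with $c=c(s,C_0)$. One direction is the standard volume argument: the open $\delta/2$-balls centered at a maximal $\delta$-separated net are disjoint, together cover $\bar B(x,r)$, and sit inside $\bar B(x,r+\delta/2)$, and then Ahlfors regularity converts the disjointness and covering into the packing bounds (and vice versa).

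Next I would apply Proposition \ref{almostisos1} to obtain, for each $R>0$ and each $\eta>0$, maps $\phi_k\colon X\to X_k$ and $\psi_k\colon X_k\to X$ (for $k$ sufficiently large, depending on $R$ and $\eta$) that distort distances by at most $\eta$ on the $R$-ball at the basepoint, satisfy $\psi_k\circ\phi_k\approx \mathrm{id}$ up to error $\eta$ on that ball, and send $p\mapsto p_k$, $p_k\mapsto p$. Fix $x\in X$ and scales $0<\delta\leq r\leq \diam X$ (which, if needed, one verifies is dominated by $\liminf\diam X_k$). For the upper bound, any $\delta$-separated set $\{z_1,\dots,z_n\}\subset \bar B_X(x,r)$ pushes forward under $\phi_k$ to a $(\delta-2\eta)$-separated subset of $\bar B_{X_k}(\phi_k(x),r+\eta)$, so the uniform Ahlfors regularity of $X_k$ gives $n\leq c(C_0)\bigl((r+\eta)/(\delta-2\eta)\bigr)^s$. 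For the lower bound, start with a maximal $\delta$-separated subset of $\bar B_{X_k}(\phi_k(x),r-\eta)$ of cardinality at least $c(C_0)^{-1}((r-\eta)/\delta)^s$, and push it back by $\psi_k$ into $\bar B_X(x,r)$, losing only an additive $O(\eta)$ in the separation. Letting $\eta\to 0$ yields net bounds in $X$ with constants depending only on $s$ and the uniform bound on the Ahlfors constants of the $X_j$, which the equivalence in the previous paragraph then converts to the desired inequality $C^{-1}r^s\leq \mathcal{H}^s(\bar B_X(x,r))\leq Cr^s$.

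The main (but essentially clerical) obstacle is the bookkeeping of the three simultaneous error terms $\eta$ coming from (i) approximate distance preservation of $\phi_k$ and $\psi_k$, (ii) the slight discrepancy between the intended radius $r$ and the radius of the ball that $\phi_k(z_i)$ or $\psi_k(y_i)$ actually lies in, and (iii) the near-inverse relation $\psi_k\circ\phi_k\approx \mathrm{id}_X$, which is what allows one to conclude that $\psi_k(\bar B_{X_k}(\phi_k(x),r-\eta))\subset \bar B_X(x,r)$. Once $\eta$ is chosen small relative to $\delta$ and $r$, all of these are absorbed into the constant $c(C_0)$.
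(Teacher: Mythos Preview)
Your argument is correct. Note, however, that the paper does not actually prove this lemma: it is simply quoted as Lemma~8.29 of \cite{DS97}, with no argument given. So there is no ``paper's own proof'' to compare against; you have supplied a self-contained proof where the paper defers to an external reference.

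Your route via separated nets is a standard and clean way to handle this, precisely because net cardinalities, unlike Hausdorff measures, are transparently stable under the additive distance distortion of the maps $\phi_k,\psi_k$ from Proposition~\ref{almostisos1}. Two small remarks. First, the bookkeeping is slightly off in places: for instance, to land $\psi_k(\overline B_{X_k}(\phi_k(x),r'))$ inside $\overline B_X(x,r)$ you need $r' \le r-2\eta$ rather than $r-\eta$, since both the distance distortion of $\psi_k$ and the near-inverse error $d_X(\psi_k(\phi_k(x)),x)\le\eta$ contribute. Similarly, for the lower bound you should start from a $(\delta+\eta)$-separated set in $X_k$ so that its $\psi_k$-image is genuinely $\delta$-separated. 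As you say, this is clerical and is absorbed once $\eta$ is small relative to $\delta$ and $r$. Second, the ``standard fact'' you invoke has an easy direction (Ahlfors regularity gives the two-sided net bounds by the disjoint-ball packing and covering arguments you indicate) and a less trivial one: deducing the \emph{lower} bound $\mathcal H^s(\overline B(x,r))\ge C^{-1}r^s$ from two-sided net bounds requires a Frostman-type mass distribution argument (build a measure by distributing mass through a nested sequence of maximal nets, then apply the mass distribution principle). This is indeed standard, but it is where the real content of the equivalence lies, so it is worth being aware that you are invoking it.
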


\subsection{Convergence of LLC spaces}
Here we state some results that apply to the convergence of metric spaces (in the sense of the previous section) when those metric spaces also happen to be linearly locally contractible. The main goals are to show that a convergent sequence of uniformly LLC spaces has an LLC limit (essentially a result of Borsuk \cite{Bo55}), and to describe a result that improves Proposition \ref{almostisos} in this context.

The following basic fact about LLC spaces will be used a number of times.

\begin{lemma}\label{ballsareconnected}
Let $X$ be a $(L,r_0)$-LLC space. Fix $x\in X$ and $r<r_0$. Then there is a connected open set $U$ satisfying
$$ B(x,r/(2L)) \subset U \subset B(x,r). $$
\end{lemma}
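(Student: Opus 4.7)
The plan is to take $U$ to be the path component of $x$ in $B(x,r)$, and invoke the $(L, r_0)$-LLC property at two different scales: once at scale $r/(2L)$ to show that $U$ contains $B(x,r/(2L))$, and once at arbitrarily small scales near each $y \in U$ to show that $U$ is open. Connectedness comes for free from path-connectedness.

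First I would show $B(x, r/(2L)) \subset U$. Since $r/(2L) < r < r_0$, applying LLC to the ball $B(x, r/(2L))$ yields a homotopy $H: B(x, r/(2L)) \times [0,1] \to B(x, r/2)$ with $H(\cdot,0) = \mathrm{id}$ and $H(\cdot,1)\equiv x_0$ for some $x_0 \in B(x,r/2)$. For any $y \in B(x, r/(2L))$, the path $t \mapsto H(y,t)$ joins $y$ to $x_0$ inside $B(x,r/2) \subset B(x,r)$, and similarly $t \mapsto H(x,t)$ joins $x$ to $x_0$ inside $B(x,r)$. Concatenating these paths shows $y$ lies in the path component of $x$, i.e.\ $y \in U$.

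Next I would argue $U$ is open. Fix $y \in U$ and choose $\delta > 0$ small enough that $\delta < r_0$ and $L\delta < r - d(x,y)$ (so in particular $B(y, L\delta) \subset B(x,r)$). Applying LLC to $B(y, \delta)$ gives a contraction $H': B(y,\delta) \times [0,1] \to B(y, L\delta)$ to some basepoint $y_0$. For each $z \in B(y,\delta)$, the paths $t \mapsto H'(z,t)$ and $t \mapsto H'(y,t)$ lie in $B(y, L\delta) \subset B(x,r)$ and connect $z$ and $y$ to $y_0$; hence $z$ is path-connected to $y$, and so to $x$, inside $B(x,r)$. This gives $B(y,\delta) \subset U$.

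Combining the two steps, $U$ is an open, path-connected (hence connected) subset of $B(x,r)$ containing $B(x,r/(2L))$. No real obstacle arises; the only point demanding a bit of care is the choice of $\delta$ in the openness argument, where one needs the entire contracted image $B(y, L\delta)$ (not just $B(y,\delta)$) to sit inside $B(x,r)$, which forces the strict inequality $L\delta < r - d(x,y)$.
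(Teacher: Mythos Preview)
Your proof is correct and takes a genuinely different route from the paper's. The paper builds $U$ constructively: for each point $y$ and scale $r$ it sets $E(y,r)=H(B(y,r/(2L))\times[0,1])$, the full trace of the contraction homotopy, then inductively defines $E_0=E(x,r)$ and $E_i=\bigcup_{y\in E_{i-1}} E(y,2^{-i}r)$, and finally takes $U=\bigcup_i E_i$. Openness comes from the nesting $E_i\subset\interior E_{i+1}$, connectedness from each $E_i$ being connected, and containment in $B(x,r)$ from the geometric series $\sum 2^{-(i+1)}r<r$. Your argument bypasses all of this by taking $U$ to be the path component of $x$ in $B(x,r)$ and invoking LLC twice: once at scale $r/(2L)$ to swallow the inner ball, and once locally at each $y\in U$ to show openness. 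This is shorter and more conceptual; the paper's construction, on the other hand, yields an explicit $U$ built from homotopy images at dyadic scales, which in principle gives more control over the shape of $U$ (though that extra information is not used later in the paper). Your only delicate step, choosing $\delta$ so that $B(y,L\delta)\subset B(x,r)$, is exactly right.
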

\begin{proof}

Consider a point $y\in X$ and radius $0<r<r_0$. Let $H:B(y,r/(2L)) \times [0,1] \rightarrow B(x,r/2)$ be a homotopy contracting $B(y,r/(2L))$ to a point. Define
$$E(y,r) = H(B(y,r/(2L)) \times [0,1]).$$
Then $E(y,r)$ is a connected subset of $B(y,r/2)$ containing $B(y,r/(2L))$.

Let $E_0= E(x,r)$. For $i\in\mathbb{N}$, inductively define sets
$$ E_i = \bigcup_{y\in E_{i-1}} E(y,2^{-i}r).$$
By induction, each set $E_i$ is connected. In addition, for each $i$ we have the relation
\begin{equation}\label{int}
E_i \subset \interior E_{i+1}.
\end{equation}
Now let
$$ U = \bigcup_{i=1}^\infty E_i. $$
Then, as the union of connected sets that all contain the point $x$, $U$ is connected. In addition, by (\ref{int}) $U$ is open: if $x\in U$, then, for some $i$,
$$ x\in E_i \subset \interior E_{i+1} \subset \interior U. $$
Finally, if $y\in E_i \subset U$, then
$$ d(x,y) \leq \left(2^{-(i+1)} + 2^{-i} + \dots + 2^{-1}\right)r < r.$$
Thus, $U$ is a connected open set, $U\subset B(x,r)$, and $U\supseteq E_0\supseteq B(x,r/(2L))$.
\end{proof}

The following is our main lemma about convergence of uniformly LLC sets.

\begin{lemma}\label{LLClimitset}
Let $F_k$ be a sequence of closed sets in some Euclidean space $\mathbb{R}^N$ that are each $(L,r_0)$-LLC (as spaces equipped with the induced Euclidean metric). Suppose that $F_k\rightarrow F$ in the sense of Definition \ref{sets}. Then $F$ is LLC, with constants depending only on $L$ and $r_0$.
\end{lemma}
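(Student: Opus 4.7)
I would prove this by a Borsuk-style simplicial approximation argument that transfers contractions from the $F_k$ to $F$. The goal is to produce LLC constants $(L', r_0')$ for $F$ depending only on $L$, $r_0$, and the ambient dimension $N$ (via uniform doubling in $\mathbb{R}^N$). Fix $x \in F$ and small $r$; by Hausdorff convergence, pick $x_k \in F_k$ with $\|x_k - x\| \to 0$.

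\textbf{Transfer of polyhedral maps.} The main technical step is a transfer lemma: given a finite polyhedron $K$ and a continuous $f: K \to F$ with $f(K) \subset B_F(x, r)$, for every $\epsilon > 0$ and all large $k$ there is a continuous $f_k: K \to F_k$ with $\|f_k - f\|_\infty < \epsilon$ in $\mathbb{R}^N$ and $f_k(K) \subset B_{F_k}(x_k, r + \epsilon)$. To build $f_k$, subdivide $K$ into simplices of diameter $< \delta$, define $f_k$ on each vertex by Euclidean nearest-point projection into $F_k$ (a small perturbation, by Hausdorff convergence), and extend skeleton by skeleton using LLC of $F_k$: for each $m$-simplex $\sigma$, the already-defined map $\partial\sigma \to F_k$ has image in a ball of some radius $\rho_m$, which by LLC contracts in a ball of radius $L\rho_m$, yielding an extension over $\sigma$. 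Iterating over $m = 1, \dots, \dim K$ keeps $\rho_m \leq C L^{\dim K} \delta$, so for $\delta$ small the perturbation is $< \epsilon$. Importantly, the resulting $f_k$ has modulus of continuity controlled by $\delta$ and $L$ alone. The reverse transfer, approximating a map into $F_k$ by one into $F$, is obtained by running the same construction for a further subsequence $F_{k'}$ with controlled modulus, then extracting an Arzel\`a--Ascoli limit (which lies in $F$ by Hausdorff convergence).

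\textbf{From polyhedral nullhomotopies to ball contraction.} Using the transfer lemma, any continuous $f: K \to B_F(x, r)$ from a finite polyhedron is nullhomotopic in $B_F(x, L'r)$: transfer to $f_k$, extend via LLC of $F_k$ to a nullhomotopy on the cone $CK$ into $B_{F_k}(x_k, L(r + \epsilon))$, and reverse-transfer to obtain a nullhomotopy in $F$ inside $B_F(x, L'r)$, adjusting near the boundary of the cone by short homotopies. To contract $B_F(x, r)$ itself, approximate the identity by $\iota \circ \pi$, where $\pi: B_F(x, r) \to K$ sends each point to the nerve $K$ of a fine cover of $B_F(x, r)$ via a Lipschitz partition of unity, and $\iota: K \to B_F(x, r + \epsilon)$ is built by the transfer lemma (vertices to representatives, higher skeleta extended through $F_k$). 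The composition $\iota \circ \pi$ is uniformly $\epsilon$-close to the identity; a homotopy between them is built pointwise from short paths using the polyhedral nullhomotopy result applied to the $1$-simplex, and $\iota(K)$ is itself nullhomotopic in $B_F(x, L'r)$ by applying the same result to the identity on $K$. Composing these yields the desired contraction of $B_F(x, r)$.

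\textbf{Main obstacle.} The chief difficulty is that LLC provides no uniform modulus of continuity for contracting homotopies, so one cannot directly extract an Arzel\`a--Ascoli limit of the LLC contractions of $B_{F_k}(x_k, r)$. The simplicial approximation circumvents this by producing all maps with modulus controlled by the simplicial mesh, for which compactness is available. A secondary issue is bookkeeping: each transfer step multiplies radii by at most $L^{\dim K}$, and $\dim K$ is controlled by the doubling constant in $\mathbb{R}^N$, so the final $L'$ depends on $L, r_0, N$ and not on the specific sequence $F_k$.
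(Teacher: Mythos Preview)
Your approach is sound in outline and follows the Petersen--Semmes polyhedral-approximation strategy, which is a genuine alternative to the paper's argument. The paper instead exploits the ambient Euclidean structure directly: from the LLC property it extracts, for each $F_k$, a retraction $r_k$ from a uniform tubular neighborhood onto $F_k$ satisfying $|r_k(x)-x|\le C\dist(x,F_k)$ (a result of Borsuk). It then inductively splices these into maps $s_k$ defined on a fixed neighborhood of $F$ that converge uniformly to a retraction $s$ onto $F$. A ball $B$ in $F$ is contracted in three steps: slide linearly into $F_k$ via $(1-t)x + t\, s_k(x)$, contract there using the LLC of $F_k$, and push the resulting homotopy back to $F$ by composing with $s$. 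This is shorter and avoids simplicial bookkeeping, but it leans on straight-line homotopies in $\mathbb{R}^N$; your route is more intrinsic and would adapt to settings without a linear ambient space.

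One step in your sketch does need more care. You propose that the homotopy from the identity on $B_F(x,r)$ to $\iota\circ\pi$ be ``built pointwise from short paths using the polyhedral nullhomotopy result applied to the $1$-simplex.'' That produces, for each point $y$, a path in $F$ from $y$ to $\iota(\pi(y))$ lying in a small ball, but gives no reason these paths vary continuously in $y$; so you do not yet have a continuous map $B_F(x,r)\times[0,1]\to F$. The standard repair is to run the same nerve-and-extend machinery on the product: pass to a fine nerve $K'$ of $B_F(x,r)$, work on the polyhedron $K'\times[0,1]$ (dimension bounded via $N$), prescribe the two boundary faces, and fill skeleton by skeleton through the $F_k$. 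This is exactly the content of Proposition~5.8 of Semmes, which the paper invokes elsewhere; once inserted, your argument goes through with constants depending on $L$, $r_0$, and $N$.
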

In the case of compact sets converging in the usual Hausdorff metric, Lemma \ref{LLClimitset} is due to Borsuk \cite{Bo55}. A similar localized version of the result was noted in \cite{HK11}. Here we provide a proof, following the method of Borsuk.

The proof is somewhat technical, though the main idea is not difficult: For subsets of Euclidean space, the LLC property for a set $E$ implies the existence of a retraction to $E$, from an open neighborhood of $E$ of fixed size, that moves points by an amount proportional to their distance from $E$. We use the existence of these retractions on the limiting sets $F_k$ to construct a retraction onto the limit $F$. This retraction can then be used to show that $F$ is LLC. Because the convergence is local, there are some minor technical complications.

\begin{proof}[Proof of Lemma \ref{LLClimitset}]
For a set $E\subseteq\mathbb{R}^N$, let $U_\epsilon(E)$ denote the open $\epsilon$-neighborhood of $E$. Let $B_R = B(0,R) \subset \mathbb{R}^N$.

We note first that the LLC property implies that there exist constants $c<1$ and $C=c^{-1}>1$ such that each $F_k$ admits a continuous retraction $r_k:U_c(F_k)\rightarrow F_k$ satisfying
\begin{equation}\label{retraction}
|r_k(x) - x| \leq C \dist(x, F_k)
\end{equation}
for $x\in U_c(F_k)$.
The proof of this can be found in Section 13 of \cite{Bo55} (and does not require compactness of the sets).

Fix a ball $B=B(p,r)\cap F$ for $p\in F$ and $r<r_0/4$. Fix $R>\max\{4Lr, 12C\}$ large enough so that $\overline{B}\subset B_R$.

By passing to a subsequence, we may without loss of generality assume that, for all $k$,
$$ \sup\{ \dist(x, F_k) : x\in F\cap B_{10R}\} < c/4 $$
$$ \sup\{ \dist(x, F) : x\in F_k\cap B_{10R}\} < c/4 $$

It follows that
$$U:= \bigcap_{k=1}^\infty U_c(F_k)$$
contains a $c/2$-neighborhood of $B_{9R}\cap F$ as well as of $\displaystyle\bigcup_{k=1}^\infty (B_{9R} \cap F_k)$.

For $k\in\mathbb{N}$, fix decreasing sequences
\begin{equation}\label{eta}
 \eta_k = c 4^{-k},
\end{equation}
\begin{equation}\label{eta'}
 \eta'_k = c 4^{-k}/3.
\end{equation}

We may now pass to a further subsequence of our sets on which we assume that
\begin{equation}\label{close1}
\sup\{ \dist(x, F_k) : x\in F\cap B_{9R}\} < \eta'_k/8,
\end{equation}
\begin{equation}\label{close2}
\sup\{ \dist(x, F) : x\in F_k\cap B_{9R}\} < \eta'_k/8.
\end{equation}

Let $U_k = U_{\eta_k}(F_k)$ and $V_k = U_{\eta'_k}(F_k)$. Then, if $x\in U_{k+1} \cap B_{7R}$, we have, by (\ref{eta}), (\ref{eta'}), (\ref{close1}), and (\ref{close2}), that
$$ \dist(x,F_k\cap B_{8R}) < \eta'_k. $$
Therefore, for every $R'\leq 7R$,
\begin{equation}\label{inclusions}
(U_{k+1} \cap B_{R'}) \subset (V_{k} \cap B_{R'}) \subset (\overline{V}_{k} \cap B_{R'}) \subset (U_{k} \cap B_{R'}) \subset  (U \cap B_{R'})
\end{equation}

We will now inductively construct a new sequence of retractions $s_k:U\cap B_{5R} \rightarrow F_k$ by modifying the maps $r_k$.

Let $s_1 = r_1$. Suppose that $s_k$ has already been defined and in addition satisfies $s_k = r_k$ on $V_k \cap B_{5R}$. Let $f:U\rightarrow \mathbb{R}$ be a continuous function that is $0$ on $U\setminus U_{k+1}$ and $1$ on $V_{k+1}$. For $x\in U\cap B_{5R}$, define
$$ s_{k+1}(x) = r_{k+1}((1-f(x))s_k(x) + f(x)x). $$

We first check that $s_{k+1}$ is well-defined, i.e. that for $x\in U\cap B_{5R}$, the point $(1-f(x))s_k(x) + f(x)x$ is in $U$. If $x\in U\setminus U_{k+1}$, then $(1-f(x))s_k(x) + f(x)x = s_k(x)\in F_k \in U$, so $s_{k+1}$ is well-defined. In the case $x\in U_{k+1}$, we have by (\ref{inclusions}) that $x\in V_k$. By our inductive assumption that $s_k = r_k$ on $V_k \cap B_{5r}$, we get
$$ |x-s_k(x)| = |x-r_k(x)| \leq C\eta'_k < c. $$
Thus, every point on the line segment from $x$ to $s_k(x)$ is in the $c$-neighborhood of $F_k$ and so is in $U$.

That $s_{k+1}$ is the identity on points of $F_{k+1}$ follows from the fact that, by definition, $s_{k+1}=r_{k+1}$ on $F_{k+1}$.

We now make the following claim: If $x\in U\cap B_{5R}$ and $s_k(x)\in B_{6R}$, then 
\begin{equation}\label{previous}
|s_{k+1}(x) - s_k(x)| < 3C 4^{-k}.
\end{equation}
To prove this, we consider three cases.
\begin{enumerate}[(i)]
\item The case $x\in V_{k+1}$:

In this case, using (\ref{retraction}) and the definitions of $s_k, s_{k+1}$, we get
$$|s_{k+1}(x) - s_k(x)| = |r_{k+1}(x) - r_k(x)| \leq |r_{k+1}(x) - x| +|x - r_k(x)| \leq C(\eta'_{k+1} + \eta'_k) < 3C 4^{-k}.$$

\item The case $x\in U\setminus U_{k+1}$:

In this case, $s_{k+1}(x) = r_{k+1}(s_k(x))$. By assumption, $s_k(x)\in F_k \cap B_{6R}$ and therefore $\dist(s_k(x), F_{k+1}) < \eta'_k/4$ by (\ref{close2}). Therefore, by (\ref{retraction}),
$$ |s_{k+1}(x) - s_k(x)| = | r_{k+1}(s_k(x)) - s_k(x)| \leq C\eta'_k/4 < 3C 4^{-k}.$$

\item The case $x\in U_{k+1} \setminus V_{k+1}$:

Note that $x\in U_{k+1}\cap B_{5R} \subset V_{k} \cap B_{5R}$, so $s_k(x) = r_k(x)$. Let
$$y = (1-f(x))s_k(x) + f(x)x,$$
which is on the line segment $L$ joining $x$ to $s_k(x)=r_k(x)$. The diameter of $L$ is therefore $|x-r_k(x)|\leq C\eta'_k$, by (\ref{retraction}) and the fact that $x\in V_k$.

In addition, because $x\in U_{k+1}$, we have $\dist(x,F_{k+1})<\eta_k$. 

From these calculations, it follows that
$$ \dist(y,F_{k+1}) \leq \dist(x,F_{k+1}) + \diam(L) \leq \eta_k + C\eta'_k, $$
and therefore, by (\ref{retraction}), that
$$ |s_{k+1}(x) - x| = |r_{k+1}(y) - x| \leq |r_{k+1}(y) - y| + |y-x| \leq C(\eta_k + C\eta'_k) + C\eta'_k \leq 2C4^{-k}.$$
From this, we see that 
$$ |s_{k+1}(x) - s_k(x)| \leq |s_{k+1}(x) - x| + |x-r_k(x)| < 2C 4^{-k} + \eta_k < 3C4^{-k}. $$
\end{enumerate}

This concludes the proof of the claim that $|s_{k+1}(x) - s_k(x)|<3C 4^{-k}$ if $x\in U\cap B_{5R}$ and $s_k(x)\in B_{6R}$.

Now note that
$$|s_1(x) - x| = |r_1(x) - x| \leq Cc = 1.$$
Therefore $r_1(x) \in B_{5.5 R}$. Because $\displaystyle \sum_{k=0}^\infty (3C4^{-k}) \leq 6C < R/2$, it follows from the above claim that $s_k(x) \in B_{6R}$ for all $k$, and therefore that
$$ |s_{k+1}(x) - s_k(x)| < 3C 4^{-k} $$
for all $k$.

It follows immediately from this and from (\ref{retraction}) that $s_k|_{U\cap B_{5R}}$ converge uniformly to a map
$$s:U\cap B_{5R} \rightarrow F\cap B_{6R}$$
that is the identity on $F\cap B_{5R}$.

Note that the map $s$ is indeed the identity on $F$: if $x\in F\cap B_{5R}$, then by (\ref{close1}) and the definition of $s_k$ we see that $s_k(x) = r_k(x)$. It follows that
$$ |s(x) - x| = \lim_{k\rightarrow\infty} |s_k(x) - x| = \lim_{k\rightarrow\infty} |r_k(x) - x| \leq C\lim_{k\rightarrow\infty} \dist(x,F_k) = 0. $$

To finish the proof of the lemma, recall our fixed ball $B=B(p,r)\cap F$ in $F\cap B_{R}$. The map $s$, when restricted to $F\cap\overline{B}_{4R}$, is the identity. Therefore, for every positive number $\eta<r$ sufficiently small, there is a neighborhood $V\subset (U\cap B_{5R})$ of $F\cap\overline{B}_{4R}$ such that
$$ x\in V \Rightarrow |s(x) - x|<\eta.$$

We may now choose $k$ large so that $|s_k(x) - s(x)|<\eta$ for all $x\in U\cap B_{5R}$ (by uniform convergence) and in addition so that
$$ F_k \cap B_{3R} \subset V. $$

Now we contract $B$ in the following manner. First, consider the homotopy
$$ h(x,t) = (1-t)x + ts_k(x)$$
for $x\in B$ and $t\in[0,1]$. Because $|s_k(x) -x|=|s_k(x) - s(x)|<\eta$, we have $h(B\times [0,1])\subset B_{3R}$. In addition, $h$ deforms $B$ onto a set $E\subset F_k \cap B_{3R}$ of diameter no more than $2r+2\eta$. By our choices of $r$ and $\eta$, $2r+2\eta<4r<r_0$, and therefore $E$ is contractible inside a set $E'\subset F_k\cap B_{3R}$ of diameter $L(2r+2\eta)$.

Let $g:B\times [0,1]\rightarrow E'\subset (F_k\cap B_{3R})$ denote the homotopy of $B$ onto a point that first deforms by $h$ and then by the contraction in $F_k$. Then $s \circ g$ is a contraction of $B$ to a point within the set $s(E')\subset F$, which has diameter no more than $L(2r+2\eta) + 2\eta$.

In summary, if we recall that $\eta<r$, we have shown that the ball $B=B(p,r) \cap F$ is contractible within the ball $B' = B(p, (4L+2)r) \cap F$ whenever $r<r_0/4$. This completes the proof.
\end{proof}

\begin{lemma}\label{limitLLC}
Suppose the pointed metric spaces $(X_k, d_k, v_k)$ are $(L,r_0)$-LLC and converge to the pointed metric space $(X,d,v)$ in the sense of Definition \ref{spaceconvergence}. Then $(X,d)$ is LLC, with constants depending only on $L, r_0$.
\end{lemma}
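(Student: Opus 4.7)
The plan is to reduce Lemma \ref{limitLLC} to the Euclidean-set version, Lemma \ref{LLClimitset}, by transferring the LLC property through the bi-Lipschitz embeddings furnished by Definition \ref{spaceconvergence}. By that definition, fix $\alpha \in (0,1]$, $N \in \mathbb{N}$, and $C$-bi-Lipschitz embeddings $f_k : (X_k, d_k^\alpha) \rightarrow \mathbb{R}^N$ and $f : (X, d^\alpha) \rightarrow \mathbb{R}^N$ with $f_k(X_k) \rightarrow f(X)$ in the sense of Definition \ref{sets}, where I write $C$ for the bi-Lipschitz constant to avoid conflict with the LLC constant $L$.

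The proof rests on two elementary transfer principles. First, if $(Z, \rho)$ is $(L, r_0)$-LLC, then $(Z, \rho^\alpha)$ is $(L^\alpha, r_0^\alpha)$-LLC, since snowflaking rescales radii by the power $\alpha$ and preserves centers. Second, if $g : (Z_1, \rho_1) \rightarrow (Z_2, \rho_2)$ is a $C$-bi-Lipschitz bijection and $(Z_1, \rho_1)$ is $(L, r_0)$-LLC, then $(Z_2, \rho_2)$ is $(C^2 L, C^{-1} r_0)$-LLC: to contract a ball $B_{\rho_2}(g(x), r)$, pull it back into $B_{\rho_1}(x, Cr)$, contract there inside the concentric ball $B_{\rho_1}(x, L \cdot Cr)$, and push the resulting homotopy forward into $B_{\rho_2}(g(x), C^2 L r)$, keeping $g(x)$ as the center.

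Applying these in sequence, first snowflake $(X_k, d_k)$ to obtain uniform LLC constants for $(X_k, d_k^\alpha)$, then push forward through the $f_k$ to conclude that the Euclidean subsets $f_k(X_k) \subset \mathbb{R}^N$ are uniformly $(L_1, r_1)$-LLC with $L_1, r_1$ depending only on $L, r_0, C, \alpha$. Now invoke Lemma \ref{LLClimitset} on the convergent sequence $f_k(X_k) \rightarrow f(X)$ to conclude that $f(X) \subset \mathbb{R}^N$ is LLC in the Euclidean metric with controlled constants $(L_2, r_2)$. Finally, pull this back to $(X, d^\alpha)$ via the $C$-bi-Lipschitz map $f^{-1}$, and then undo the snowflake using the companion principle that $(Z, \rho^\alpha)$ being $(L, r_0)$-LLC forces $(Z, \rho)$ to be $(L^{1/\alpha}, r_0^{1/\alpha})$-LLC, recovering LLC for $(X, d)$ with constants depending only on data already controlled.

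The main (and modest) obstacle is the bookkeeping: at each step one must verify that the contracting homotopy lies inside a \emph{concentric} ball, not merely a nearby ball of larger diameter. Both transfer operations preserve the identity of the center of each ball, so concentricity is maintained throughout the chain. Apart from this check, the proof is a direct concatenation of the two transfer principles with Lemma \ref{LLClimitset}.
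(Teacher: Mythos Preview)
Your proof is correct and takes essentially the same approach as the paper: the paper's own proof is the one-line remark that the result follows from Lemma \ref{LLClimitset} together with the fact that the snowflake and bi-Lipschitz embeddings of Definition \ref{spaceconvergence} distort the LLC constants in a quantitative way. You have simply made those two transfer principles explicit and tracked the constants carefully (indeed more carefully than the paper, since your argument honestly records the additional dependence on the embedding constants $C$ and $\alpha$).
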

\begin{proof}
This follows immediately from Lemma \ref{LLClimitset} and Definition \ref{spaceconvergence}, as the ``snowflake'' transformations of Definition \ref{spaceconvergence} distort the LLC constants in a quantitative way.
\end{proof}

To conclude this section, we give two lemmas which improve Propositions \ref{almostisos1} and \ref{almostisos} in the setting of LLC spaces. They say that if a sequence of mapping packages converges, then the ``almost-isometries'' $\phi_k$ and $\psi_k$ between the limiting spaces and the limit space can be taken to be continuous.

\begin{definition}
For $\eta>0$, we say that continuous maps $f,g:M\rightarrow N$ between metric spaces are \textit{$\eta$-homotopic} if they are homotopic by a homotopy $H:M\times[0,1]\rightarrow N$ such that, for all $x\in M$ and $t\in [0,1]$, we have
$$ d_N (f(x), H(x,t)) < \eta. $$
\end{definition}

Note in particular that if $f$ and $g$ are $\eta$-homotopic, then $d_N(f(x), g(x))<\eta$ for all $x$.

The following is an immediate consequence of Proposition \ref{almostisos} above, combined with Propositions 5.4 and 5.8 of \cite{Se96}. (See also \cite{Pe90}, Section 3, for a cleaner statement in the compact case.) Note that all our spaces are Ahlfors $s$-regular and thus have topological dimension bounded above by $s$, so those results apply.

\begin{lemma}\label{ctsalmostiso}
Suppose the pointed metric spaces $(X_k, d_k, v_k)$ are $(L,r_0)$-LLC, uniformly Ahlfors $s$-regular, and converge to the pointed metric space $(X,d,v)$ in the sense of Definition \ref{spaceconvergence}.

Fix a point $x\in X$ and radius $R>0$. Then there exist continuous mappings $f_k:\overline{B}_X(x,R) \rightarrow X_k$ and $g_k:\overline{B}_{X_k}(f_k(x),R)\rightarrow X$ satisfying the following conditions:
\begin{enumerate}[\normalfont (i)]
\item\label{isom} They almost preserve distances, in the sense that
$$ \lim_{k\rightarrow\infty} \sup\{|d_{X_k}(f_k(p) , f_k(q)) - d_X(p,q)| : p,q\in B_X(x, R) \} = 0$$
and
$$ \lim_{k\rightarrow\infty} \sup\{|d_{X}(g_k(p) , g_k(q)) - d_{X_k}(p,q)| : p,q\in B_{X_k}(f_k(x), R) \} = 0.$$

\item\label{inverse} For every $0<r<R$, we have
\begin{equation*}
 \lim_{k\rightarrow\infty} \inf \{\eta: g_k \circ f_k|_{\overline{B}(x,r)} \text{ is $\eta$-homotopic to the inclusion of } \overline{B}(x,r) \text{ into } B(x,R) \} = 0
\end{equation*}
and
\begin{equation*}
\lim_{k\rightarrow\infty} \inf \{\eta: f_k \circ g_k|_{\overline{B}(f_k(x),r)} \text{ is $\eta$-homotopic to the inclusion of }\overline{B}(f_k(x),r) \text{ into } B(f_k(x),R) \} = 0
\end{equation*}

\item\label{basepoint} If $x$ is the basepoint $v\in X$, then in addition we have
$$ \lim_{k\rightarrow\infty} d_k(f_k(v), v_k) = 0$$
\end{enumerate}
\end{lemma}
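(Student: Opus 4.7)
The plan is to start from the (possibly discontinuous) almost-isometries $\phi_k$ and $\psi_k$ provided by Proposition \ref{almostisos1} and continuously approximate them, using the uniform LLC property together with the uniform upper bound $s$ on the topological dimensions of all the $X_k$ and $X$ (which comes from uniform Ahlfors $s$-regularity). This is precisely the situation in which Propositions 5.4 and 5.8 of \cite{Se96} apply: a nearly continuous map from a space of bounded topological dimension into a uniformly LLC target can be uniformly approximated by a continuous map, and any two continuous maps that are uniformly close enough are joined by a short homotopy built out of the LLC contractions.

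First I would apply Proposition \ref{almostisos1} with a radius $R' > R + 1$, obtaining possibly discontinuous $\phi_k:X \to X_k$ and $\psi_k:X_k \to X$ with $\phi_k(v)=v_k$, $\psi_k(v_k)=v$, that almost preserve distances on $B_X(v, R')$ and $B_{X_k}(v_k, R')$ respectively and satisfy $\psi_k \circ \phi_k \to \mathrm{id}_X$, $\phi_k \circ \psi_k \to \mathrm{id}_{X_k}$ uniformly on these balls. Restricting $\phi_k$ to $\overline{B}_X(x, R+\tfrac{1}{2})$ and invoking the Semmes approximation result, I would produce a continuous map $f_k: \overline{B}_X(x, R+\tfrac{1}{2}) \to X_k$ with
\[
\sup\bigl\{ d_{X_k}(f_k(p), \phi_k(p)) : p \in \overline{B}_X(x, R+\tfrac{1}{2}) \bigr\} \longrightarrow 0.
\]
Then I would do the analogous construction on the other side, starting from $\psi_k$ restricted to $\overline{B}_{X_k}(f_k(x), R)$ (which, for large $k$, sits inside $\overline{B}_{X_k}(\phi_k(x), R+\tfrac{1}{2})$ where $\psi_k$ is well-behaved by the proposition), obtaining continuous $g_k: \overline{B}_{X_k}(f_k(x), R) \to X$ with $\sup d_X(g_k(q), \psi_k(q)) \to 0$. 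Then I would restrict $f_k$ to $\overline{B}_X(x, R)$.

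Property (\ref{isom}) is then immediate from the triangle inequality: distances measured via $f_k$ differ from those measured via $\phi_k$ by at most twice the uniform approximation error, and $\phi_k$ already enjoys the required approximate isometry bound. For (\ref{inverse}), fix $r < R$. On $\overline{B}_X(x,r)$, the composition $g_k \circ f_k$ is within $o(1)$ of $\psi_k \circ \phi_k$, which is itself within $o(1)$ of the inclusion. Once this $o(1)$ quantity is smaller than the LLC radius $r_0$ (for all $k$ sufficiently large), Proposition 5.8 of \cite{Se96} produces a homotopy from $g_k \circ f_k|_{\overline{B}(x,r)}$ to the inclusion whose tracks have diameter comparable to the distance between the two maps, hence $\eta_k$-small with $\eta_k \to 0$; since $r < R$ and the error is eventually less than $R-r$, these tracks remain in $B(x,R)$ as required. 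The symmetric argument handles $f_k \circ g_k$. Finally, for (\ref{basepoint}), $\phi_k(v) = v_k$ by Proposition \ref{almostisos1}, so $d_k(f_k(v), v_k) = d_k(f_k(v), \phi_k(v)) \to 0$ follows from uniform closeness of $f_k$ to $\phi_k$.

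The main technical obstacle is verifying that the Semmes approximation and small-homotopy results can be applied with constants uniform in $k$. This is where all three hypotheses play a role: uniform Ahlfors $s$-regularity bounds the topological dimension of the domains by $s$ and makes them uniformly doubling, uniform LLC provides the fixed contractibility scale needed to build the nerve-style approximations and homotopies, and the fact that all balls in question live at scale $R$ (much smaller than some fixed rescaling of $r_0$ after passing to a subsequence) ensures we stay in the contractibility regime. Once uniformity is in hand, the three required properties fall out of the construction by the triangle inequality and the quantitative bounds on the homotopies.
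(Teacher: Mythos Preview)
Your proposal is correct and follows essentially the same approach as the paper: start from the rough almost-isometries $\phi_k,\psi_k$ of Proposition~\ref{almostisos1}, use Semmes's Proposition~5.4 to continuously approximate them, derive (\ref{isom}) and (\ref{basepoint}) by the triangle inequality, and obtain (\ref{inverse}) from Semmes's Proposition~5.8 once the compositions are uniformly close to the inclusions. The only cosmetic difference is that the paper organizes the argument around a chain of small parameters $\eta'' < \eta' < \eta$ rather than your enlarged radius $R+\tfrac12$ and $o(1)$ bookkeeping.
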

\begin{proof}
Take $\eta>0$. We will find, for all $k$ sufficiently large, continuous mappings $f_k$ and $g_k$ as above that preserve distances up to additive error $\eta$ and such that $f_k\circ g_k$ and $g_k\circ f_k$ are $\eta$-homotopic to the appropriate inclusion maps.

Fix numbers $\eta'', \eta'$ sufficiently small, with $0<\eta''<\eta'<\eta$. They will depend only on $\eta$ and the (uniform) data of the spaces $X$, $\{X_k\}$.

By Proposition \ref{almostisos1}, there is an index $k_0\in\mathbb{N}$ such that, for all $k\geq k_0$, the maps $\phi_k:X\rightarrow X_k$ and $\psi_k: X_k\rightarrow X$ preserve distances up to additive error $\eta''$ on $\overline{B}(x,R)$ and $\overline{B}(f_k(x),R)$, respectively. By \cite{Se96}, Proposition 5.4, if $\eta''$ was chosen sufficiently small compared to $\eta'$, then there exist continuous maps $f_k:\overline{B}(x,R)\rightarrow X_k$ and $g_k:\overline{B}(f_k(x),R)\rightarrow X$ such that
\begin{equation}\label{fnearphi}
d_k(f_k(z), \phi_k(z))<\eta' \text{   and   } d(g_k(y), \psi_k(y))<\eta'
\end{equation}
on their respective domains. Part (\ref{isom}) of the lemma follows immediately from this by taking $\eta'<\eta/10$. Part (\ref{basepoint}) also follows, because $\phi_k(v) = v_k$.

Now fix $0<r<R$. By Proposition \ref{almostisos1} we may also assume that, for all $k\geq k_0$, we have
$$ d(\phi_k(\psi_k(x)), x) < \eta' \text{ and } d(\psi_k(\phi_k(x)),x) < \eta',$$
in addition to the properties above.

If $\eta'$ was chosen sufficiently small, then $f_k(\overline{B}(x,r))\subset B(f_k(x),R)$ and so the composition $g_k \circ f_k$ is defined on $\overline{B}(x,r)$. Similarly, the composition $f_k\circ g_k$ is defined on $\overline{B}(f_k(x),r)$. By choosing $\eta''<\eta'/10$ and using equation (\ref{fnearphi}) and the properties of $\phi_k$ and $\psi_k$, we also see that
$$ d(f_k(g_k(x)), x) < 2\eta' \text{ and } d(g_k(f_k(x)),x) < 2\eta'.$$
Therefore, if $\eta'$ was chosen sufficiently small, depending on $\eta$ and the data of the spaces $X$, $\{X_k\}$, Proposition 5.8 of \cite{Se96} implies that
$$g_k \circ f_k|_{\overline{B}(x,r)} \text{ and } f_k \circ g_k|_{\overline{B}(f_k(x),r)}$$
are $\eta$-homotopic to the inclusions
$$ \overline{B}(x,r)\rightarrow B(x,R) \text{ and } \overline{B}(f_k(x),r)\rightarrow B(f_k(x),R).$$

This proves part (\ref{inverse}) of the lemma.
\end{proof}

Propositions 5.4 and 5.8 of \cite{Se96}, on which the proof of the previous lemma is based, are important consequences of the linear local contractibility of the spaces $X$ and $\{X_k\}$. Roughly speaking, they say that if a mapping into an LLC space is ``roughly continuous'' (as the maps $\phi_k$ and $\psi_k$ are), then it is close to a continuous mapping, and if two continuous mappings into an LLC space are close, then they are $\eta$-homotopic. The proofs of these facts use polyhedral approximations of the source space and an induction on the skeleta of the polyhedra. We encourage the reader to look at Semmes's paper \cite{Se96} or Petersen's work \cite{Pe90} for the details.

The following additional fact is an immediate consequence of Lemma \ref{almostisos} and equation (\ref{fnearphi}) above.
\begin{lemma}\label{ctsalmostiso2}
Suppose we have convergence of a sequence of mapping packages
$$ \left((X_k, d_k, p_k), (Y_k, \rho_k, q_k), h_k\right) \rightarrow \left((X, d, p), (Y, \rho, q), h\right) $$
in the sense of Definition \ref{mappingpackageconvergence}. Suppose that all the spaces involved are uniformly Ahlfors $s$-regular and uniformly LLC, and that the mappings $\{h_k\}$ and $h$ are uniformly $C$-Lipschitz and satisfy $h_k(p_k)=q_k$, $h(p)=q$. Then for all $R>0$, there exist continuous mappings
$$f_k:\overline{B}_X(p,R)\rightarrow X_k \text{ and } g_k: \overline{B}_{X_k}(p_k,R) \rightarrow X$$
satisfying exactly the conditions of Lemma \ref{ctsalmostiso}, and continuous mappings
$$\tilde{f}_k:\overline{B}_Y(q,R)\rightarrow Y_k \text{ and } \tilde{g}_k: \overline{B}_{Y_k}(q_k,R) \rightarrow Y$$
satisfying the analogous properties of Lemma \ref{ctsalmostiso}, such that in addition we have that for all $x\in X$, 
$$\lim_{k\rightarrow\infty} \tilde{g}_k(h_k(f_k(x))) = h(x)$$
uniformly on $\overline{B}_X(p,R/2C)$.
\end{lemma}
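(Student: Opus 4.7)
The plan is to apply Lemma \ref{ctsalmostiso} separately to the source sequence $(X_k, d_k, p_k) \to (X, d, p)$ and to the target sequence $(Y_k, \rho_k, q_k) \to (Y, \rho, q)$, obtaining the four continuous mappings $f_k, g_k, \tilde{f}_k, \tilde{g}_k$ directly. All the almost-isometry and $\eta$-homotopy properties required of each pair are then immediate. What remains is only the compatibility statement $\tilde{g}_k \circ h_k \circ f_k \to h$ uniformly on $\overline{B}_X(p, R/2C)$.

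To prove this, I would use not merely the statement of Lemma \ref{ctsalmostiso} but also the auxiliary inequality (\ref{fnearphi}) appearing in its proof, which says that the continuous maps constructed there are uniformly close on the relevant balls to the (possibly discontinuous) ``almost-isometries'' $\phi_k, \psi_k, \sigma_k, \tau_k$ from Propositions \ref{almostisos1} and \ref{almostisos}. Concretely, for any preassigned $\eta > 0$ and all sufficiently large $k$, one has $d_k(f_k(x), \phi_k(x)) < \eta$ on $\overline{B}_X(p, R)$ and $\rho(\tilde{g}_k(y), \tau_k(y)) < \eta$ on $\overline{B}_{Y_k}(q_k, R)$. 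The convergence of the mapping package also yields, by Proposition \ref{almostisos}, that $\tau_k(h_k(\phi_k(x))) \to h(x)$ uniformly on $\overline{B}_X(p, R/2C)$.

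Given these ingredients, I would insert the intermediate point $\tau_k(h_k(\phi_k(x)))$ and estimate
\begin{align*}
\rho(\tilde{g}_k(h_k(f_k(x))), h(x)) &\leq \rho(\tilde{g}_k(h_k(f_k(x))), \tau_k(h_k(f_k(x)))) \\
&\quad + \rho(\tau_k(h_k(f_k(x))), \tau_k(h_k(\phi_k(x)))) \\
&\quad + \rho(\tau_k(h_k(\phi_k(x))), h(x)).
\end{align*}
The first term is below $\eta$ by the closeness of $\tilde{g}_k$ to $\tau_k$. The second is controlled because $\tau_k$ approximately preserves distances (Proposition \ref{almostisos1}) and $h_k$ is uniformly $C$-Lipschitz; it is therefore bounded above by $C\, d_k(f_k(x), \phi_k(x)) + o(1) < C\eta + o(1)$. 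The third tends to $0$ uniformly on $\overline{B}_X(p, R/2C)$. Letting $\eta \to 0$ slowly as $k \to \infty$ yields the required uniform convergence.

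The only real obstacle I anticipate is bookkeeping rather than conceptual: one must check that the composition $\tilde{g}_k \circ h_k \circ f_k$ is actually defined on $\overline{B}_X(p, R/2C)$, i.e. that $h_k(f_k(x)) \in \overline{B}_{Y_k}(q_k, R)$. Parts (\ref{isom}) and (\ref{basepoint}) of Lemma \ref{ctsalmostiso} imply that $f_k$ sends $p$ to within $o(1)$ of $p_k$ and is almost distance-preserving, so it maps $\overline{B}_X(p, R/2C)$ into a slight enlargement of $\overline{B}_{X_k}(p_k, R/2C)$. The $C$-Lipschitz condition with $h_k(p_k) = q_k$ then forces $h_k(f_k(x)) \in \overline{B}_{Y_k}(q_k, R/2 + o(1)) \subset \overline{B}_{Y_k}(q_k, R)$ for large $k$. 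Once these inclusions are in place, applying $\tilde{g}_k$ is legal and the estimate above runs through without further complication.
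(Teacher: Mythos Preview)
Your proposal is correct and matches the paper's approach exactly: the paper gives no detailed proof, stating only that the lemma ``is an immediate consequence of Lemma \ref{almostisos} and equation (\ref{fnearphi}) above,'' and you have accurately filled in precisely those details.
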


\subsection{Convergence of manifolds}
Here we state some facts on the convergence of metric spaces that are LLC topological manifolds. Our main goal is to give a proof of Proposition \ref{manifoldlimit} below, which says that the limit of a sequence of uniformly Ahlfors regular, uniformly LLC, topological $d$-manifolds is a homology $d$-manifold (see Definition \ref{hommfld}). This result essentially goes back to Begle \cite{Be44} (see also \cite{GPW90}) and appears to be well-known, but we did not find a modern proof in the literature in the generality necessary here.

Below, $H_*$ denotes singular homology with integer coefficients.

\begin{lemma}\label{poinc}
Let $M$ be an $(L,r_0)$-LLC oriented topological $d$-manifold. Let $v\in M$ and let $K_1 \subset K_2$ be compact sets satisfying $v \in K_1 \subset B(v,r) \subset B(v, 2Lr) \subset K_2 \subset B(v, r_0)$. Then the following facts hold.
\begin{enumerate}[\normalfont (i)]
\item The map $j_{*}: H_p(M,M\setminus K_2) \rightarrow H_p(M, M\setminus K_1)$, induced by inclusion, is trivial if $p\neq d$.\label{poinci}

\item The map $i_{*}: H_d(M, M\setminus K_2) \rightarrow H_d(M, M\setminus \{v\}) \cong \mathbb{Z}$, induced by inclusion, is surjective.\label{poincii}

\item With this notation, we also have $\ker i_* \subseteq \ker j_*$ in the top degree $d$.\label{poinciii}
\end{enumerate}
\end{lemma}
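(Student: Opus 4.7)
My approach is to deduce all three parts from Poincar\'e-Lefschetz duality for oriented topological $d$-manifolds, combined with the observation that the LLC hypothesis lets us null-homotope $K_1$ to $v$ inside $K_2$. Recall that for an oriented topological $d$-manifold $M$ and any compact subset $K \subset M$, there is a natural isomorphism
$$ H_p(M, M \setminus K) \cong \check{H}^{d-p}(K), $$
where $\check{H}^*$ denotes \v{C}ech (equivalently, Alexander--Spanier) cohomology with integer coefficients; see \cite{Br97}, Chapter V. Naturality means that for $K_1 \subset K_2$ compact in $M$, the map $j_*$ on homology corresponds under duality to the restriction $j^*\colon \check{H}^{d-p}(K_2) \to \check{H}^{d-p}(K_1)$.

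The geometric input from the LLC hypothesis is that $B(v, r)$ is contractible inside $B(v, Lr) \subset B(v, 2Lr) \subset K_2$; restricting this contraction to $K_1 \subset B(v, r)$ yields a homotopy $H\colon K_1 \times [0,1] \to K_2$ from the inclusion $K_1 \hookrightarrow K_2$ to the constant map at $v$. Thus, as maps into $K_2$, the inclusion is homotopic to the composition $K_1 \to \{v\} \hookrightarrow K_2$, and on cohomology $j^*$ factors as
$$ \check{H}^q(K_2) \xrightarrow{i^*} \check{H}^q(\{v\}) \xrightarrow{e} \check{H}^q(K_1) $$
for every $q \geq 0$, where $i^*$ is restriction to $v$ and $e$ is induced by the constant map $K_1 \to \{v\}$.

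The three parts now follow at once. For (i), $p \neq d$ gives $q := d-p \neq 0$, so $\check{H}^q(\{v\}) = 0$, making $j^*$ (and hence $j_*$) the zero map. For (ii), taking $p = d$ and $q = 0$, the constant function $1 \in \check{H}^0(K_2)$ restricts under $i^*$ to $1 \in \check{H}^0(\{v\}) \cong \mathbb{Z}$, so $i^*$, and hence $i_*$, is surjective. For (iii), again at $p = d$, the factorization $j^* = e \circ i^*$ immediately yields $\ker i^* \subseteq \ker j^*$, which is exactly the statement $\ker i_* \subseteq \ker j_*$ under the duality isomorphism.

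The main technical obstacle is justifying Poincar\'e-Lefschetz duality in the required generality: $M$ is only assumed to be an oriented topological $d$-manifold (no smoothness, triangulability, or second countability is imposed), and $K_1, K_2$ may be arbitrary compacta with no ANR-type local structure. The right formulation uses \v{C}ech (or Alexander--Spanier) cohomology on the compact side, since singular cohomology can fail to be homotopy-invariant for pathological compacta. All of this is classical but requires some care; see \cite{Br97}, Chapter V.
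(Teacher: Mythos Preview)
Your proof is correct and follows essentially the same route as the paper: invoke Poincar\'e--Lefschetz duality $H_p(M, M\setminus K) \cong \check{H}^{d-p}(K)$ (the paper cites \cite{Sp81}, Theorem 6.2.17, rather than Bredon) and then use the LLC hypothesis to contract $K_1$ inside $K_2$. One small point worth tightening: the LLC definition only promises a contraction of $B(v,r)$ to \emph{some} point $p \in B(v, Lr)$, not necessarily to $v$; but since $v$ itself is carried to $p$ along a path in $B(v, Lr)$ by the homotopy, one can concatenate with the reverse of that path and arrange the contraction to terminate at $v$, so your factorization $j^* = e \circ i^*$ goes through as stated.

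Your handling of part (iii) is in fact slightly cleaner than the paper's. The paper argues (iii) by invoking the earlier Lemma~\ref{ballsareconnected} to show that $K_1$ lies in a single connected component of $K_2$, and then reasons directly with $\check{H}^0$ as locally constant integer-valued functions on components. Your factorization $j^* = e \circ i^*$ through $\check{H}^0(\{v\})$ subsumes this and yields all three parts uniformly, without appealing to Lemma~\ref{ballsareconnected}.
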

\begin{proof}
By use of the natural duality isomorphisms (\cite{Sp81},Theorem 6.2.17) we obtain the following commutative diagram. Here $\overline{H}$ denotes \v{C}ech cohomology, and all maps in the diagram are the natural maps induced by inclusion.
\begin{equation}
\begin{CD}
H_p(M, M\setminus K_2) @>j_*>> H_p(M, M\setminus K_1) @>k_*>> H_p(M, M\setminus \{v\})\\
@V{\cong}VV @V{\cong}VV @V{\cong}VV\\
\overline{H}^{d-p}(K_2) @>j^*>> \overline{H}^{d-p}(K_1) @>k^*>> \overline{H}^{d-p}(\{v\})
\end{CD}
\end{equation}

If $p\neq d$, then $j^*$ is trivial because $K_1$ is contractible in $K_2$, which proves (\ref{poinci}).

Now let $p=d$. The map $i^* = k^* j^*: \overline{H}^{0}(K_2) \rightarrow  \overline{H}^{0}(\{v\})$ is surjective, as $v\in K_2$, which proves (\ref{poincii}).

Finally, by Lemma \ref{ballsareconnected}, $K_1$ is entirely contained in a connected component $E$ of $K_2$. Therefore, every connected component $E'$ of $K_2$ that does not contain $\{v\}$ is in fact disjoint from $K_1$. It follows that if $i^*\phi = k^*j^* \phi$ is trivial in $\overline{H}^0(\{v\})$, then $j^*\phi$ is already trivial in $\overline{H}^0(K_1)$. This proves claim (\ref{poinciii}).
\end{proof}

We now set up some definitions for the main result of this sub-section. A \textit{Euclidean Neighborhood Retract} (ENR) is a space $X$ which, for every $N\in\mathbb{N}$ and every topological embedding $e:X\rightarrow \mathbb{R}^N$, has the property that $e(X)$ is a retract of some open neighborhood of $e(X)$ in $\mathbb{R}^N$. Every LLC space with finite topological dimension is a Euclidean Neighborhood Retract (see \cite{Hu65}, Theorem V.7.1).

\begin{definition}\label{hommfld}
A space $M$ that is an ENR and that satisfies the condition
$$ H_*(M, M\setminus \{x\}) = H_*(\mathbb{R}^d, \mathbb{R}^d\setminus\{0\}), $$
for all $x\in M$, is called a \textit{homology} $d$\textit{-manifold}.
\end{definition}

\begin{prop}\label{manifoldlimit}
Suppose $(X_k, d_k)$ are uniformly Ahlfors $s$-regular, $(L,r_0)$-LLC oriented topological $d$-manifolds, $v_k\in X_k$, and the sequence of pointed metric spaces $(X_k, d_k, v_k)$ converges to $(X,d,v)$ in the sense of Definition \ref{spaceconvergence}. Then $(X,d)$ is an LLC homology $d$-manifold.
\end{prop}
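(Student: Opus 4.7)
The LLC property of $X$ is immediate from Lemma~\ref{limitLLC}. Since $X$ is Ahlfors $s$-regular with controlled constant by Lemma~\ref{ARlimit}, it has topological dimension at most $s$, and by \cite{Hu65}, Theorem V.7.1, any finite-dimensional LLC space is an ENR. The remaining content is the local homology computation $H_p(X, X\setminus\{v\}) \cong H_p(\mathbb{R}^d, \mathbb{R}^d\setminus\{0\})$ for each $v\in X$.

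The plan is to transfer Lemma~\ref{poinc}, which applies to the manifolds $X_k$, to the limit $X$ via the continuous almost-isometries of Lemma~\ref{ctsalmostiso}. Fix $v\in X$ and choose radii $r_1 < r_2 < r_3 < r_0/(10L)$ with $2Lr_i < r_{i+1}$; set $K_i = \overline{B}_X(v,r_i)$. Apply Lemma~\ref{ctsalmostiso} at $v$ with a large radius $R\gg r_3$ to obtain continuous maps $f_k:\overline{B}_X(v,R)\to X_k$ and $g_k:\overline{B}_{X_k}(f_k(v),R)\to X$ that preserve distances up to additive error $\eta_k\to 0$ and whose compositions $g_k\circ f_k$, $f_k\circ g_k$ are $\eta_k$-homotopic to the respective inclusions. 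Write $v_k = f_k(v)$ and $K_i' := \overline{B}_{X_k}(v_k, r_i+\eta_k)$. For $k$ large, the pair $(K_1', K_2')$ satisfies the hypotheses of Lemma~\ref{poinc} in the manifold $X_k$, and $f_k, g_k$ induce a commutative diagram
\[
\begin{CD}
H_p(X, X\setminus K_3) @>{j_*}>> H_p(X, X\setminus K_1)\\
@V{(f_k)_*}VV @AA{(g_k)_*}A\\
H_p(X_k, X_k\setminus K_2') @>>> H_p(X_k, X_k\setminus K_1')
\end{CD}
\]
whose horizontal maps are inclusion-induced; commutativity comes from the fact that $g_k\circ f_k$ is $\eta_k$-homotopic to the inclusion, and for large $k$ this homotopy, when restricted to the relevant chains, stays in $X\setminus K_1$.

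For $p\ne d$, Lemma~\ref{poinc}(\ref{poinci}) applied to $X_k$ makes the bottom map vanish, hence $j_*$ vanishes. Passing to the direct limit over nested shrinking $K_i$ yields $H_p(X, X\setminus\{v\}) = 0$ for $p\ne d$. For $p=d$, the analogous diagrams combined with Lemma~\ref{poinc}(\ref{poincii}),(\ref{poinciii}) and the identification $H_d(X_k, X_k\setminus\{v_k\})\cong\mathbb{Z}$ let me identify the colimit system for $H_d(X, X\setminus\{v\})$ with (a cofinal subsystem of) the one for $H_d(X_k, X_k\setminus\{v_k\})\cong\mathbb{Z}$, giving $H_d(X, X\setminus\{v\})\cong\mathbb{Z}$; a non-vanishing generator is produced by transporting the local fundamental class of some $X_k$ at $v_k$ back through $g_k$.

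The principal technical obstacle is maintaining the pair structure under the $\eta_k$-homotopies: the homotopy from $g_k\circ f_k$ to the inclusion could in principle drag a point of $X\setminus K_3$ across the excluded set $K_1$, destroying the factorization on relative homology. I control this by choosing the gap $r_2/(2L) - r_1$ much larger than $\eta_k$, which is possible since $\eta_k \to 0$; an analogous estimate is needed on the $X_k$ side for $f_k\circ g_k$. The $p=d$ case requires additional bookkeeping to check that the fundamental-class generator is preserved across the almost-isometries simultaneously at all scales, not merely that $H_d$ is nontrivial. Once these verifications are carried out, the argument reduces cleanly to the manifold case handled by Lemma~\ref{poinc}.
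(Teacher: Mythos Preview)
Your proposal is correct and follows essentially the same strategy as the paper: reduce to the ENR property via Ahlfors regularity and LLC, then transfer Lemma~\ref{poinc} from the $X_k$ to $X$ using the continuous almost-isometries of Lemma~\ref{ctsalmostiso} and a direct-limit argument over shrinking balls. The paper organizes the $p=d$ case as a longer explicit diagram chase (Claims~\ref{earlysurj}--\ref{psiwelldef}) in which the index $k$ is allowed to depend on the element being chased before being fixed at the end; your phrase ``identify the colimit system \dots with a cofinal subsystem of the one for $H_d(X_k,X_k\setminus\{v_k\})$'' slightly understates this, since no single $k$ supplies diagonal arrows at all scales, but your closing caveat already anticipates exactly this bookkeeping.
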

\begin{proof}

The fact that $(X,d)$ is LLC is Lemma \ref{limitLLC} above. As this statement is quantitative, we will denote the LLC constants of $(X,d)$ also by $(L, r_0)$.

The fact that $X$ is a homology $d$-manifold can be proven by the methods of Begle \cite{Be44}, again as remarked in \cite{HK11}. For convenience, we provide a proof using the tools introduced in this section.

We know that $X$ is Ahlfors $s$-regular, and therefore it has finite Hausdorff dimension and thus finite topological dimension. Because $X$ is also LLC, it is an ENR, as noted above. It now suffices to show that for every $x\in X$, the local integer (singular) homology groups $H_p(X, X\setminus\{x\})$ are isomorphic to $\mathbb{Z}$ if $p=d$ and trivial otherwise.

To set up the proof we need some notation.

Let $L'=4L$. Fix an integer $p\geq 0$, a point $x\in X$, and a radius $R>0$. In addition, for each $k\in\mathbb{N}$, fix continuous maps
$$f_k: B_X(x,R) \rightarrow X_k$$
$$g_k: B_{X_k}(f_k(x),R) \rightarrow X $$
as in Lemma \ref{ctsalmostiso}. These maps have the property that, up to arbitrarily small additive error (decreasing to zero with $k$), they preserve distances and are inverses of each other.

For $n\in\mathbb{N}$, let
$$ F_n = H_p(X, X\setminus \overline{B}(x, (L')^{-n}r_0)) $$
and
$$ G^k_n = H_p(X_k, X_k\setminus \overline{B}(f_k(x), (L')^{-n}r_0)) $$
(Of course these groups depend on $p$, but we will make it clear from context which value of $p$ we take.)

Note that for $m\geq n$ there are natural maps $(i_{n,m})_*: F_n\rightarrow F_{m}$ and $(j^k_{n,m})_*:G^k_n\rightarrow G^k_{m}$ induced by inclusion. 

\begin{claim}\label{directlimits}
We have the direct limits
$$ F_\infty:= \varinjlim F_n \cong H_p(X, X\setminus \{x\})$$
and
$$ G^k_\infty:= \varinjlim G^k_n \cong H_p(X_k, X_k\setminus \{f_k(x)\}) \cong
\begin{cases}
\mathbb{Z}, & \text{if }p=d \\
0, & \text{if }p\neq d
\end{cases} $$
\end{claim}
\begin{proof}[Proof of Claim \ref{directlimits}]
We will show the first direct limit; the proof of the second is identical. The proof follows from standard properties of direct limits and singular homology. There are natural maps $\phi_n:F_n\rightarrow H_p(X, X\setminus \{x\})$ induced by inclusion. To show that $F_\infty \cong H_p(X,X\setminus\{x\})$, we must show two things (see, e.g., \cite{Ma78}, Proposition A.4):
\begin{enumerate}
\item For every $a\in H_p(X,X\setminus\{x\})$, there exists $n\in\mathbb{N}$ and $b\in F_n$ such that $\phi_n(b) = a$. \label{dirlimsurj}
\item If $b\in F_n$ and $\phi_n(B)=0$, then $(i_{n,m})_*(b)=0$ for some $m\geq n$. \label{dirliminj}
\end{enumerate}

To show (\ref{dirlimsurj}), consider $a\in H_p(X,X\setminus\{x\})$. By excision and the fact that singular homology has compact support (see \cite{Sp81}, 4.8.11), $a = j_*(c)$, where $c\in H_p(X, X\setminus U)$ for some open set $U$ containing $x$, and
$$ j_*: H_p(X, X\setminus U) \rightarrow H_p(X, X\setminus\{x\})$$
is the mapping induced by inclusion.

We now choose $n\in\mathbb{N}$ large enough so that $\overline{B}(x, (L')^{-n}r_0) \subset U$. There is a mapping
$$ k_*: H_p(X, X\setminus U) \rightarrow H_p(X, X\setminus\overline{B}(x, (L')^{-n}r_0)) $$
induced by inclusion.

Because all mappings are induced by inclusion, we have $\phi_n k_* = j_*$. Thus, if we let $b=k_*(c)\in H_p(X, X\setminus\overline{B}(x, (L')^{-n}r_0))$, we see that $\phi_n(b) = \phi_n k_* (c) = j_*(c) = a$. This proves part (\ref{dirlimsurj}) of Claim \ref{directlimits}.

To show part (\ref{dirliminj}), suppose that $b\in F_n$ is such that $\phi_n(b) =0\in H_p(X,X\setminus\{x\})$. As before, using the fact that singular homology has compact support, we can write $b=l_*(c)$, where $c\in H_p(X, X\setminus U)$ for some open set $U$ containing $x$, and
$$ l_*: H_p(X, X\setminus U) \rightarrow F_n$$
is the mapping induced by inclusion.

By excision and \cite{Sp81}, Theorem 4.8.13, we see that $i_*(c)=0\in H_p(X, X\setminus V)$, where $V\subset U$ is an open set containing $x$ and
$$ i_*: H_p(X, X\setminus U) \rightarrow H_p(X, X\setminus V)$$
is the mapping induced by inclusion.

We now choose $m\in\mathbb{N}$ large enough so that  $\overline{B}(x, (L')^{-n}r_0) \subset V$. Let 
$$ h_*: H_p(X, X\setminus V) \rightarrow F_m$$
be induced by inclusion.
Again because all mappings commute, we have
$$ (i_{n,m})_* (b) = h_* i_*(c) = h_*(0) = 0 \in F_m.$$
This completes the proof of Claim \ref{directlimits}.
\end{proof}

Let $(i_n)_*:F_n\rightarrow F_\infty$ and $(j^k_n)_*:G^k_n\rightarrow G^k_\infty$ denote the natural inclusion maps.

The excision property of homology and the properties of $f_k$ and $g_k$ allow us to conclude the following: For all $n_0\in\mathbb{N}$, there exists $k_0\in\mathbb{N}$ such that for all $k\geq k_0$ and $n\leq n_0$, there are group homomorphisms $a^k_n:F_n\rightarrow G^k_{n+1}$ and $b^k_n:G^k_n\rightarrow F_{n+1}$ that commute with the inclusion maps above, and that satisfy
$$ b^k_{n+1} a^k_n  = i_{n, n+2} \hspace{0.5in}\text{and}\hspace{0.5in}  a^k_{n+1} b^k_n= j^k_{n, n+2}. $$
Indeed, $a^k_n$ and $b^k_n$ are simply the maps on homology induced by $f_k$ and $g_k$, and so these properties follow from Lemma \ref{ctsalmostiso}. The fact that $a^k_n$ maps into $G^k_{n+1}$ if $n\leq n_0$ $k$ is sufficiently large follows from the fact that $f_k$ preserves distances up to a small additive error, by Lemma \ref{ctsalmostiso}.

In summary, for each $n_0$ there exists a $k$ so that we have the following commutative diagram, in which the diagonal arrows do not exist past column $n_0$:
\begin{equation}\label{diagram}
\begin{tikzcd}[row sep=huge, column sep=huge]
F_1 \arrow{r}{i_{1,2}}\arrow{rd}[near start]{a^k_1} &F_2 \arrow{r}{i_{2,3}}\arrow{rd}[near start]{a^k_2} &F_3 \arrow{r}{i_{3,4}}\arrow{rd}[near start]{a^k_3} &\dots \arrow{r}{i_{n_0-1, n_0}}\arrow{rd}[near start]{a^k_{n_0-1}} &F_{n_0} \arrow{r}{i_{n_0,n_0+1}} &\dots F_\infty \\
G^k_1 \arrow{r}{j^k_{1,2}}\arrow{ru}[near start]{b^k_1} &G^k_2 \arrow{r}{j^k_{2,3}}\arrow{ru}[near start]{b^k_2} &G^k_3 \arrow{r}{j^k_{3,4}}\arrow{ru}[near start]{b^k_3} &\dots\arrow{ru}[near start]{b^k_{n_0-1}} \arrow{r}{j^k_{n_0-1, n_0}} &G^k_{n_0}\arrow{r}{j^k_{n_0,n_0+1}} &\dots G^k_\infty
\end{tikzcd}
\end{equation}

Note that Lemma \ref{poinc} translates to the following information in this setting:
\begin{enumerate}[(i)]
\item \label{i} If $p\neq d$, then for each $k$ and for each $m>n$, the map $j^k_{n,m}: G^k_n \rightarrow G^k_m$ is trivial. 
\item \label{ii} If $p=d$, then for each $k,n$ the map $j^k_n:G^k_n \rightarrow G^k_\infty$ is surjective.
\item \label{iii} If $p=d$, then for each $k,n$, we have $\ker j^k_n \subseteq \ker j^k_{n,n+1}$.
\end{enumerate}

We wish to show that $F_\infty$ is isomorphic to $\mathbb{Z}$ if $p=d$ and is trivial if $p\neq d$, just as each of the spaces $G^k_\infty$ are. 

Consider first the case $p\neq d$. By (\ref{i}), we have that for all $k$ and for all $m>n$, the maps $j^k_{n,m}$ are trivial. It follows by the diagram that the maps $i_{n,n+3}$ are all trivial (as they factor through $j^k_{n,n+1}$ for some $k$) and therefore that $F_\infty$ is trivial when $p\neq d$. 

Now we consider the case $p=d$. 

\begin{claim}\label{earlysurj}
In degree $p=d$, $i_2:F_2\rightarrow F_\infty$ is surjective.
\end{claim}
\begin{proof}[Proof of Claim \ref{earlysurj}]
This is just diagram-chasing. We will freely use the three properties of the diagram (\ref{diagram}) described above, and we encourage the reader to simply trace the proof in that diagram.

Fix $x\in F_\infty$. Then $x=i_m (x_m)$ for some $m\in\mathbb{N}$, by the definition of the direct limit. Fix $k$ large so that diagram (\ref{diagram}) has diagonal arrows $a_l:=a^k_l$ and $b_l:=b^k_l$ up to $l=m+3$. (We will suppress all superscripts $k$ in the proof of this claim.)

Let $y_{m+1} = a_m (x_m) \in G_{m+1}$. Then some $y_1\in G_1$ satisfies $j_1 (y_1) = j_{m+1}(y_{m+1})$, by (\ref{ii}), and so
$$ j_{m+1}(y_{m+1}) = j_{m+1} j_{1, m+1} (y_1).$$
It follows, by (\ref{iii}), that
$$ j_{m+1, m+2} (y_{m+1}) = j_{1,m+2} (y_1). $$
Denote this element by $y_{m+2} \in G_{m+2}$.

Let $x_{m+3} = b_{m+2} (y_{m+2}) \in F_{m+3}$. We have
$$ i_{2,m+3} b_1 (y_1) = b_{m+2} j_{1,m+2} y_1 = b_{m+2} (y_{m+2}) = x_{m+3}.$$

In addition,
\begin{align*}
x_{m+3} &= b_{m+2}(y_{m+2})\\
&= b_{m+2} j_{m+1, m+2} (y_{m+1})\\
&= b_{m+2} j_{m+1, m+2} a_m (x_m)\\
&= i_{m,m+3} (x_m)
\end{align*}

It follows that $i_{m+3} (x_{m+3}) = i_{m}  (x_m) = x$, and so
$$ i_2 b_1 (y_1) = i_{m+3} i_{2,m+3} b_1 (y_1) = i_{m+3} (x_{m+3}) = x $$

and so $i_2$ is surjective.
\end{proof}

The following claim is also proven by a similar diagram chase.
\begin{claim}\label{earlyinj}
In dimension $p=d$, $\ker i_n \subset \ker i_{n,n+3}$.
\end{claim}
\begin{proof}[Proof of Claim \ref{earlyinj}]
Suppose that for some $x_n\in F_n$, $i_n(x_n)=0$. Then for some $m\geq n$, $i_{n,m}(x_n) = 0$. As in the previous claim, we now fix $k$ large so that diagram (\ref{diagram}) has diagonal arrows $a_l:=a^k_l$ and $b_l:=b^k_l$ up to column $l=m$. We then see that
$$ j_{n+1, m+1} a_n (x_n) = a_m i_{n,m} (x_n) = a_m(0) = 0. $$
By (\ref{ii}) above, it follows that $j_{n+1,n+2}a_n(x_n)=0$. Thus,
$$ i_{n,n+3}(x_n) = b_{n+2} j_{n+1,n+2} a_n (x_n) = b_{n+2}(0) = 0.$$
This completes the proof of Claim \ref{earlyinj}.
\end{proof}

Now fix $k$ so that the diagonal arrows in diagram (\ref{diagram}) exist up to $n=10$. Let $G_\infty = G^k_\infty \cong \mathbb{Z}$. (Now that $k$ is fixed we will again suppress the superscripts $k$.) We now define homomorphisms $\psi_n: F_n \rightarrow G_\infty\cong \mathbb{Z}$ by
$$ \psi_n (x_n) = j_3 a_2 i_2^{-1} i_n (x_n). $$

Note that $i_2$ is surjective but not necessarily injective; nonetheless we have the following fact:

\begin{claim}\label{psiwelldef}
The maps $\psi_n$ are well-defined homomorphisms (i.e. independent of the choice of $i_2^{-1}i_n(x_n)$) and are compatible, in the sense that $\psi_m i_{n,m}(x_n) = \psi_n(x_n)$. 
\end{claim}
\begin{proof}[Proof of Claim \ref{psiwelldef}]
Suppose first that $i_2(x_2) = i_2(x'_2)$ for some $x_2, x'_2\in F_2$. To show that $\psi_n$ is well-defined we must show that
$$ j_3 a_2 (x_2) = j_3 a_2 (x'_2). $$
By Claim \ref{earlyinj}, $i_{2,3} x_2 = i_{2,3} x'_2$. Thus,
$$ j_3 a_2 (x_2) = j_4 a_3 i_{2,3} (x_2) = j_4 a_3 i_{2,3} (x'_2) = j_3 a_2 (x'_2).$$
This shows that $\psi_n$ is well-defined.

To see that $\psi_m(i_{n,m}x_n) = \psi_n(x_n)$, we note that if $i_2(x_2) = i_n(x_n)$, then $i_2(x_2) = i_m i_{n,m} (x_m)$. Thus,
$$\psi_m(i_{n,m}(x_n)) = j_3 a_2 (x_2) = \psi_n(x_n). $$
\end{proof}

It follows that the maps $\psi_n$ induce a homomorphism $h:F_\infty \rightarrow G_\infty\cong\mathbb{Z}$ satisfying $h\circ i_n = \psi_n$ for all $n$. We will show that $h$ is injective and surjective.

Suppose $h(x) = 0$ for $x\in F_\infty$. By Claim \ref{earlysurj}, we can write $x=i_2 x_2$, for $x_2\in F_2$. Therefore,
$$ 0 = h i_2 (x_2) = \psi_2 (x_2) = j_3 a_2 (x_2). $$
Because $\ker j_3 \subseteq \ker j_{3,4}$, we have 
$$ j_{3,4} a_2 (x_2) = 0 \in G_4. $$
It follows that 
$$ i_{2,5} (x_2) = b_4  j_{3,4} a_2 (x_2) = 0 \in F_5$$
and therefore that $x = i_5 i_{2,5} (x_2) = 0\in F_\infty$. This shows that $h$ is injective.

To show that $h$ is surjective, it suffices to show that $\psi_2 = j_3 a_2$ is surjective. Consider $y\in G_\infty$. Because $j_1$ is surjective, $y=j_1 (y_1)$ for some $y_1 \in G_1$. Letting $x_2 = b_1 (y_1)$, we see that $\psi_2 (x_2) = y$. This shows that $h$ is surjective.
\end{proof}

\subsection{Some basic degree theory}\label{localdegree}
In this section, we give a degree-type lemma for close mapping packages. The idea here is quite simple, though the notation is cumbersome: If the limit of a suitable sequence of mappings is a homeomorphism, then sufficiently close limiting mappings should have images which contain a ball of fixed radius.

We now fix a set-up and some notation.

Let $\{(X_k, p_k)\}$ and $\{(Y_k, q_k)\}$ be two sequences of pointed metric spaces converging to $(X,p)$ and $(Y,q)$, respectively. Suppose that all the spaces are uniformly Ahlfors regular, $(L,r_0)$-LLC, homology $d$-manifolds, and furthermore that $\{Y_k\}$ and $Y$ are topological $d$-manifolds. 

For some fixed $R>0$, let $F_k = \overline{B}(p_k, R)$ and assume also that the sequence of pointed metric spaces $\{(F_k,p_k)\}$ converge to the pointed metric space $(F,p)$, where $F\subset X$ and $F\supset B(p,R)$ in $X$. 

Finally, assume that $w_k:F_k \rightarrow Y_k$ are uniformly $C$-Lipschitz and that we have convergence of the sequence of mapping packages:
$$ \left\{ (F_k, p_k), (Y_k, q_k), w_k \right\} \rightarrow \left\{ (F, p), (Y, q), w \right\} $$

By Lemma \ref{ctsalmostiso}, there are continuous mappings
\begin{align*}
f_k&: F \rightarrow X_k,\\
g_k&: F_k \rightarrow X,\\
\tilde{f_k}&: \overline{B}_{Y}(q, 3CR) \rightarrow Y_k,\\
\tilde{g_k}&: \overline{B}_{Y_k}(q_k, 3CR) \rightarrow Y,
\end{align*}
that almost preserve distances and are almost inverses, up to additive error that decreases to zero with $k$.

Fix an open set $A\subseteq F$.

\begin{lemma}\label{degree} 
Suppose that, for some $r, r'<r_0$, the ball $B_X(z,4r)\subset A$, the map $w|_{A}$ is a homeomorphism, and $w(B_X(z,r)) \supseteq B_Y(w(z), r')$. Then for all $k$ sufficiently large, $w_k(B_{X_k}(f_k(z),2r)) \supseteq B_{Y_k}(w_k(f_k(z)), r'/2)$.
\end{lemma}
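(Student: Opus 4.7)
The plan is a local homological degree argument, exploiting that $w|_A$ is a homeomorphism and transferring this via the almost-isometries $f_k, g_k, \tilde{f}_k, \tilde{g}_k$ provided by Lemma \ref{ctsalmostiso2}.

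First I would produce a candidate preimage. Fix $y \in B_{Y_k}(w_k(f_k(z)), r'/2)$ and set $y' := \tilde{g}_k(y) \in Y$. Since $\tilde{g}_k$ almost preserves distances and $\tilde{g}_k(w_k(f_k(z))) \to w(z)$ by Lemma \ref{ctsalmostiso2}, we have $y' \in B_Y(w(z), r')$ for $k$ large, uniformly in $y$. Since $w|_A$ is a homeomorphism and $w(B_X(z,r)) \supseteq B_Y(w(z), r')$, there is a unique $x_0 \in B_X(z,r) \cap A$ with $w(x_0) = y'$. Set $\tilde{x} := f_k(x_0)$; then $\tilde{x} \in B_{X_k}(f_k(z), r + o(1)) \subset B_{X_k}(f_k(z), 2r)$ by the almost-isometry of $f_k$, and $w_k(\tilde{x})$ is within $o(1)$ of $y$ by Lemma \ref{ctsalmostiso2}.

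Next I would set up the local degree. Since $B_X(z, 3r) \subset A$ and $w|_A$ is a homeomorphism into the topological $d$-manifold $Y$, invariance of domain yields that $w_\ast: H_d(X, X \setminus \{x_0\}) \to H_d(Y, Y \setminus \{y'\})$ is an isomorphism. The $\eta$-homotopy content of Lemma \ref{ctsalmostiso}(\ref{inverse}) makes $(f_k)_\ast$ and $(\tilde{f}_k)_\ast$ isomorphisms on the relevant local $H_d$ at $x_0$ and $y'$ for $k$ large; and Lemma \ref{ctsalmostiso2}, combined with the LLC hypothesis via Semmes's homotopy lemma of \cite{Se96} (on which Lemma \ref{ctsalmostiso} is based), ensures that $w_k \circ f_k$ and $\tilde{f}_k \circ w$ induce the same map on local $H_d$ near $x_0$. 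Composing yields that $(w_k)_\ast: H_d(X_k, X_k \setminus \{\tilde{x}\}) \to H_d(Y_k, Y_k \setminus \{w_k(\tilde{x})\})$ is an isomorphism. For $k$ large, $y$ and $w_k(\tilde{x})$ lie in a common small open topological disc $D \subset Y_k$, yielding canonical isomorphisms
$$ H_d(Y_k, Y_k \setminus \{w_k(\tilde{x})\}) \xleftarrow{\cong} H_d(Y_k, Y_k \setminus D) \xrightarrow{\cong} H_d(Y_k, Y_k \setminus \{y\}). $$
Composing with the previous iso defines a nontrivial $\Phi: H_d(X_k, X_k \setminus \{\tilde{x}\}) \to H_d(Y_k, Y_k \setminus \{y\})$.

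Finally, if $y \notin w_k(B_{X_k}(f_k(z), 2r))$, then for $s > 0$ small with $\overline{B}_{X_k}(\tilde{x}, s) \subset B_{X_k}(f_k(z), 2r)$, the restriction $w_k|_{\overline{B}_{X_k}(\tilde{x}, s)}$ lands in $Y_k \setminus \{y\}$. Via excision $H_d(\overline{B}_{X_k}(\tilde{x}, s), \overline{B}_{X_k}(\tilde{x}, s) \setminus \{\tilde{x}\}) \cong H_d(X_k, X_k \setminus \{\tilde{x}\})$, the pair map $(\overline{B}_{X_k}(\tilde{x}, s), \overline{B}_{X_k}(\tilde{x}, s) \setminus \{\tilde{x}\}) \to (Y_k, Y_k \setminus \{y\})$ induced by $w_k$ represents $\Phi$ by naturality; but all chains in its image lie inside $Y_k \setminus \{y\}$, so this map vanishes in $H_d(Y_k, Y_k \setminus \{y\})$. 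This contradicts $\Phi \neq 0$, so $y \in w_k(B_{X_k}(f_k(z), 2r))$. The main technical obstacle is the degree transfer in the third paragraph: promoting the pointwise $o(1)$-closeness of $w_k \circ f_k$ and $\tilde{f}_k \circ w$ to an identification of induced maps on local homology, which relies essentially on the LLC hypothesis via the Semmes-style homotopy lemma. A secondary issue is that maps like $(w_k)_\ast$ into local $H_d$ at $w_k(\tilde{x})$ are \emph{a priori} only well-defined when no other preimages of $w_k(\tilde{x})$ lurk near $\tilde{x}$; localizing to small open balls about $\tilde{x}$ and invoking bounded-multiplicity-type control (in the spirit of the Bonk--Kleiner theorem used elsewhere in the paper) addresses this. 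Uniformity in $y$ follows from the compactness of $\overline{B}_{Y_k}(w_k(f_k(z)), r'/2)$.
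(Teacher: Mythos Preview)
Your overall strategy is right---use the $\eta$-homotopy between $w_k\circ f_k$ and $\tilde f_k\circ w$ to push the known nontriviality of the degree of $w$ over to $w_k$---but your implementation via \emph{point}-complements creates a real problem, and it is precisely what you flag as a ``secondary issue'' that breaks the argument, not the degree transfer you call the ``main technical obstacle.''

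The map $(w_k)_\ast: H_d(X_k, X_k\setminus\{\tilde x\}) \to H_d(Y_k, Y_k\setminus\{w_k(\tilde x)\})$ is well-defined as a map of pairs only if $\tilde x$ is isolated in $w_k^{-1}(w_k(\tilde x))$, and you cannot invoke Bonk--Kleiner here: that theorem is applied in the paper only to the \emph{limit} map $w$, after bounded multiplicity is established for it, never to the approximants $w_k$, about which we know nothing beyond their being Lipschitz. Even if you instead \emph{define} $\Phi$ via $\tilde f_k\circ w\circ g_k$ (which is a legitimate map of pairs, since $w|_A$ is a homeomorphism), the final naturality step fails: to identify that with the map induced by $w_k$ on $(\overline B_{X_k}(\tilde x,s),\overline B_{X_k}(\tilde x,s)\setminus\{\tilde x\})\to(Y_k,Y_k\setminus\{y\})$ you need the $\eta$-homotopy to be a homotopy \emph{of pairs}, i.e.\ to carry the punctured ball into $Y_k\setminus\{y\}$ throughout. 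Since the hole on the target side is a single point, $\eta$-smallness cannot guarantee this.

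The paper avoids the difficulty by replacing point-complements with \emph{ball}-complements: it works with $H_d(A, A\setminus B)\to H_d(Y_k, Y_k\setminus B'')$, where $B=\overline B_X(z,r)$ and $B''=\overline B_{Y_k}(y,r'')$ with $r''=r'/(10L)$. Now the pair condition for $w_k\circ f$---that $(w_k\circ f)(A\setminus B)$ miss $B''$---is a \emph{coarse} separation statement: since $w|_A$ is a homeomorphism and $w(B)\supseteq B(w(z),r')$, the set $(\tilde f\circ w)(A\setminus B)$ misses a ball of radius roughly $r'/2$ around $y$, and $\eta$-closeness (with $\eta\ll r'$) transfers this to $w_k\circ f$. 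The $\eta$-homotopy is then automatically a homotopy of pairs. One passes from $B''$ down to $\{y\}$ only at the end, via Lemma~\ref{poinc}(\ref{poinciii}) applied in the manifold $Y_k$, and nontriviality of $(w_k\circ f)_\ast$ into $H_d(Y_k,Y_k\setminus\{y\})$ forces $y\in (w_k\circ f)(B)\subset w_k(B_{X_k}(f_k(z),2r))$.
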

\begin{proof}

Let $0<\eta<r'/(100L)$. For all $k$ sufficiently large, the maps $f_k$, $g_k$, $\tilde{f}_k$, and $\tilde{g}_k$ preserve distances up to additive error $\eta$. In addition, again by Lemma \ref{ctsalmostiso}, we may assume that, for all $k$ large and for all $r<2CR$, the map
$$\tilde{g}_k \circ \tilde{f}_k|_{B(p,r)}$$
is $\eta$-homotopic to the inclusion map of $B(p,r)$ into $B(p,R)$.

Fix $k\in\mathbb{N}$ large enough for this to hold; from now on, this $k$ will be fixed, so we drop the subscript and denote these maps by $f$, $g$, $\tilde{f}$, and $\tilde{g}$. By Lemma \ref{ctsalmostiso2} and Lemma \cite{Se96}, Proposition 5.8, we can also arrange that the maps  $\tilde{f} \circ w$ and $w_k \circ f$, when restricted to $A$, are $\eta$-homotopic on $F$.

Let $B=\overline{B}_X(z,r)$. Fix $y\in B_{Y_k}(w_k(f_k(z)), r'/2)$.

First, because $w$ is a homeomorphism on $A$, the induced mapping on relative homology,
$$ w_*: H_d(A, A\setminus B) \rightarrow H_d(Y, Y\setminus w(B)) $$
is an isomorphism. Note that $H_d(Y, Y\setminus w(B))$ is non-trivial, by duality (e.g. \cite{Sp81}, Theorem 6.2.17).

Let $V=B_Y(q,2CR)$. The map $\tilde{f}$ induces a non-trivial map
$$ \tilde{f}_* : H_d(V, V \setminus w(B)) \rightarrow H_d(Y_k, Y_k \setminus B'') $$
where $B'' = \overline{B}(y, r'')$, $r''=r'/(10L)$. Indeed, if this map were trivial, then the map
$$ \tilde{g}_* \tilde{f}_* : H_d(V, V \setminus w(B)) \rightarrow H_d(Y, Y \setminus \{p\})$$
would be trivial for some $p\in w(B)$. But this map on homology is the same as that induced by inclusion, so this cannot be the case by the duality argument of Lemma \ref{poinc} (ii).

It follows that the map
$$ (\tilde{f}\circ w)_* = \tilde{f}_* w_* :  H_d(A, A\setminus B) \rightarrow  H_d(Y_k, Y_k \setminus B'') $$
is non-trivial.

Because $\tilde{f}\circ w$ and $w_k \circ f$  are $\eta$-homotopic, the map 
$$(w_k\circ f)_*:  H_d(A, A\setminus B) \rightarrow  H_d(Y_k, Y_k \setminus B'')  $$
is non-trivial.

This implies that 
$$(w_k\circ f)_*:  H_d(A, A\setminus B) \rightarrow  H_d(Y_k, Y_k \setminus \{y\})  $$
is non-trivial. Indeed, if not, then by Lemma \ref{poinc} (iii),
$$(w_k\circ f)_*:  H_d(A, A\setminus B) \rightarrow  H_d(Y_k, Y_k \setminus B'')  $$
would be trivial, but we just showed that it is not.

So we have shown that 
$$(w_k\circ f)_*:  H_d(A, A\setminus B) \rightarrow  H_d(Y_k, Y_k \setminus \{y\})  $$
is non-trivial. It follows from this that $y\in (w_k\circ f)(B)$.

Because $f(B)\subset B(f_k(z), 2r)$, we get that
$$y\in w_k(f(B)) \subset w_k(B(f_k(z), 2r)).$$
\end{proof}

Later on, it will be convenient to work with a cohomological notion of local degree, which we introduce now. The following material is taken from \cite{HR02}. For proofs, see \cite{RR55}.

Let $H^*_c$ denote the Alexander-Spanier cohomology groups with compact supports and coefficients in $\mathbb{Z}$. (For the definition and properties of Alexander-Spanier cohomology, see \cite{Ma78}.) The following definition is taken from \cite{HR02}, I.1.

\begin{definition}\label{genmfld}
A locally compact, Hausdorff, connected, and locally connected space $M$ is called a \textit{generalized} $d$\textit{-manifold} if
\begin{itemize}
\item $H^p_c(U) = 0$ whenever $U\subseteq M$ is open and $p\geq d+1$.
\item For every $x\in M$ and every open neighborhood $U$ of $x$, there is another open neighborhood $V$ of $x$ contained in $U$ such that
$$
H^p_c(V) =
\begin{cases}
\mathbb{Z} & \text{if }p=d \\
0 & \text{if }p=d-1
\end{cases}
$$ 
and the standard homomorphism $H^n_c(W)\rightarrow H^n_c(V)$ is surjective whenever $W$ is an open neighborhood of $x$ contained in $V$.
\item $X$ has finite topological dimension.
\end{itemize}

A generalized $d$-manifold $X$ is called \textit{orientable} if $H^d_c(X) = \mathbb{Z}$, and \textit{oriented} if we fix a choice of generator in $H^d_c(X)$.
\end{definition}

\begin{rmk}\label{homisgen}
Any homology $d$-manifold is a generalized $d$-manifold, as noted in \cite{HR02}, Example 1.4 (c).
\end{rmk}

A generalized $d$-manifold $X$ is said to be \textit{oriented} if $H^d_c(X) = \mathbb{Z}$. In this case we can simultaneously orient all connected open subsets $U$ of $X$ via the isomorphism between $H^d_c(U)$ and $H^d_c(X)$. 

We will not use any sophisticated facts about cohomology below, but only the following object and its basic properties: Let $X$ and $Y$ be oriented generalized $d$-manifolds, and let $f:X\rightarrow Y$ be continuous. For any relatively compact domain $D$ in $X$, and for every $y\in Y\setminus f(\partial D)$, we can associate an integer called the local degree $\mu(y,D,f)$. In the following lemma, we collect the only properties of $\mu$ we will need.

\begin{lemma}\label{ld_props}
For continuous maps $f,g$ between oriented generalized $d$-manifolds $X$ and $Y$, and a relatively compact domain $D\subseteq X$, the local degree $\mu$ has the following properties:
\begin{itemize}
\item The function $y\rightarrow \mu(y,D,f)$ is constant on each connected component of $Y\setminus f(\partial D)$. 

\item If $y\notin f(\overline{D})$, then $\mu(y,D,f)=0$.

\item If $f : D \rightarrow f(D)$ is a homeomorphism, then $\mu(y, D, f ) = \pm 1$ for each $y \in f (D)$.

\item If $y\in Y\setminus f(\partial D)$ and if $f^{-1}(y)\subset D'$, where $D'$ is a domain contained in $D$ such that $y\in Y\setminus f(\partial D')$, then
$$ \mu(y,D,f) = \mu(y,D',f) $$
\end{itemize}
\end{lemma}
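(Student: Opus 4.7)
The plan is to invoke the standard construction of the local degree via Alexander--Spanier cohomology with compact supports, as developed in \cite{RR55} and summarized in \cite{HR02}, and then verify each of the four properties as an essentially immediate consequence of the definition together with naturality and a small excision argument.

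Concretely, given $f, D, y$ as in the statement, let $V$ denote the connected component of $Y\setminus f(\partial D)$ containing $y$ and set $U = f^{-1}(V)\cap D$. Since $\overline{D}$ is compact and $f(\partial D)\cap V = \emptyset$, the restriction $f|_U:U\to V$ is a proper continuous map. Because $X$ and $Y$ are oriented generalized $d$-manifolds, the groups $H^d_c(V)$ and $H^d_c(U)$ each carry a distinguished generator inherited from the orientations (the latter after summing over components), and properness of $f|_U$ makes the induced homomorphism $f^{*}:H^d_c(V)\to H^d_c(U)$ well-defined. Pairing with the orientation generator on the right-hand side then produces the integer $\mu(y,D,f)$.

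With this definition in hand, properties (i)--(iii) are essentially immediate. Property (i) holds because the data $(V, U, f^{*})$ used to compute $\mu(y, D, f)$ depends on $y$ only through the choice of component $V$, which is locally constant in $Y\setminus f(\partial D)$. Property (ii) follows by observing that when $y\notin f(\overline{D})$, one may shrink $V$ to a smaller connected neighborhood of $y$ disjoint from $f(\overline{D})$, whence $U=\emptyset$, $H^d_c(\emptyset)=0$, and $f^{*}=0$. Property (iii) is likewise straightforward: if $f|_{D}$ is a homeomorphism onto $f(D)$, then $f|_U:U\to V$ is a homeomorphism of oriented generalized $d$-manifolds, so $f^{*}$ carries the generator of $H^d_c(V)$ to $\pm$ the generator of $H^d_c(U)$.

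Property (iv) is the only step with any genuine content, and the plan there is a short excision argument. Since $f^{-1}(y)\subset D'$ and $y\notin f(\partial D)\cup f(\partial D')$, the set $\overline{D}\setminus D'$ is compact and avoids $f^{-1}(y)$, so continuity of $f$ produces a connected open neighborhood $V_{0}\subset V$ of $y$ with $f^{-1}(V_{0})\cap (D\setminus D')=\emptyset$. Then $f^{-1}(V_{0})\cap D = f^{-1}(V_{0})\cap D'$, so the same $f^{*}$-calculation on $V_{0}$ computes both $\mu(y, D, f)$ and $\mu(y, D', f)$; that shrinking $V$ to $V_{0}$ leaves the resulting integer unchanged is precisely the constancy statement of (i). The main obstacle is purely bookkeeping: one must track the orientation generators carefully under restriction to smaller open sets and under passage from $U$ to its components. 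These details, along with the full cohomological framework, are carried out explicitly in \cite{RR55}, so the present lemma amounts to extracting the four needed statements from that work.
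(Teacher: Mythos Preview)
Your proposal is correct and aligns with the paper's own treatment: the paper simply cites \cite{HR02}, Section 2.3 and \cite{RR55}, II.2 for these standard facts, and your sketch invokes the same sources while additionally outlining the cohomological construction and the excision argument behind each property. In effect you have expanded what the paper leaves as a bare citation, but the approach is the same.
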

\begin{proof}
These facts can all be found in \cite{HR02}, 2.3 or \cite{RR55}, II.2.
\end{proof}

\subsection{The Bonk-Kleiner theorem on mappings of bounded multiplicity}
This material is taken from \cite{BK02_rigidity}.

\begin{definition}\label{defbm}
A map $f$ between spaces $X$ and $Y$ is of \textit{bounded multiplicity} if there is a constant $N\in\mathbb{N}$ such that $\# f^{-1}(y) \leq N$ for all $y\in Y$.
\end{definition}

The following result of Bonk and Kleiner provides a partial substitute, in our setting, for Reshetnyak's theorem on quasi-regular mappings. (See the discussion of David's proof in Subsection \ref{background}.)

\begin{thm}[\cite{BK02_rigidity}, Theorem 3.4] \label{bm}
Suppose $X$ is a compact metric space, every non-empty open subset of $X$ has topological dimension at least $d$, and $f:X \rightarrow \mathbb{R}^d$ is a continuous map of bounded multiplicity. Then there is an open subset $V\subseteq f(X)$ with $\overline{V}=f(X)$ such that $U=f^{-1}(V)$ is dense in $X$ and $f|_U:U\rightarrow V$ is a covering map.
\end{thm}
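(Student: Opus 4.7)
The plan is to proceed in three stages: produce a large candidate open set $V_{0}\subseteq f(X)$, verify that its preimage is dense in $X$, and then refine $V_{0}$ to a smaller open $V$ on which $f$ actually restricts to a covering map.

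First, I would show $f(X)$ has non-empty interior in $\mathbb{R}^{d}$. Since $X$ is compact and $f$ has finite (hence $0$-dimensional) fibers, the map $f:X\to f(X)$ is closed, and Hurewicz's dimension-raising theorem gives $\dim X\le\dim f(X)+\sup_{y}\dim f^{-1}(y)=\dim f(X)$. The hypothesis applied to $X$ itself yields $\dim X\ge d$, so $f(X)$ is a compact subset of $\mathbb{R}^{d}$ of topological dimension $d$, and hence has non-empty interior $V_{0}$. The same inequality then yields density of $V_{0}$ in $f(X)$ and of $U_{0}:=f^{-1}(V_{0})$ in $X$: if either failed, one would produce a non-empty open $W\subseteq X$ whose closure maps under $f$ into $\partial_{\mathbb{R}^{d}}f(X)$, a closed subset of $\mathbb{R}^{d}$ with empty interior and therefore of dimension at most $d-1$, contradicting $\dim W\ge d$ via Hurewicz applied to $f|_{\overline{W}}$.

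The second stage is to shrink $V_{0}$ to an open $V$ on which $f$ is genuinely a covering map. I would set $m(y):=|f^{-1}(y)|$, valued in $\{1,\dots,N\}$ on $V_{0}$, and define
$$V\;=\;\bigcup_{k=1}^{N}\interior\bigl(\{y\in V_{0}:m(y)=k\}\bigr),$$
which is open by construction. To show $V$ is dense in $V_{0}$, given a ball $B\subseteq V_{0}$ let $k^{\ast}$ be the maximum of $m$ on $B$, attained at some $y^{\ast}\in B$ with fiber $\{x_{1},\dots,x_{k^{\ast}}\}$. Choose pairwise disjoint open neighborhoods $U_{i}\ni x_{i}$ small enough that $x_{i}$ is the only preimage of $y^{\ast}$ in $U_{i}$, and shrink $B$ to a neighborhood $W\ni y^{\ast}$ with $f^{-1}(W)\subseteq\bigsqcup_{i}U_{i}$. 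A local surjectivity argument showing that $f(U_{i})$ contains a neighborhood of $y^{\ast}$ for each $i$, together with the maximality of $k^{\ast}$, then forces $m\equiv k^{\ast}$ on some smaller neighborhood $W'\subseteq W$, with exactly one preimage of every $y'\in W'$ in each $U_{i}$. Hence $y^{\ast}\in V$, so $V$ is dense; on $W'$ each restriction $f|_{U_{i}\cap f^{-1}(W')}\to W'$ is a continuous bijection, and since $f$ is a closed map on compacta, these upgrade to homeomorphisms, exhibiting $f|_{f^{-1}(V)}$ as a covering map.

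The main obstacle I expect is the local surjectivity of $f$ at each $x_{i}$: without any manifold structure on $X$, ordinary invariance of domain is unavailable, so one must leverage the hypothesis that every non-empty open subset of $X$ has topological dimension at least $d$. I would attempt this via a local degree computation in \v{C}ech or Alexander--Spanier cohomology of the type set up in Subsection \ref{localdegree} (see especially Definition \ref{genmfld} and Lemma \ref{ld_props}), combined with Hurewicz's theorem applied locally to show that $f(U_{i})$ cannot be too thin near $y^{\ast}$. The bounded-multiplicity hypothesis is what prevents the pathology of infinitely many sheets of $f$ accumulating at $x_{i}$ and obstructing a clean local-degree argument; this is where the topological work of Bonk and Kleiner is genuinely needed, beyond bookkeeping with dimension alone.
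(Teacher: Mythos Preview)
The paper does not prove Theorem~\ref{bm}; it is quoted without proof from Bonk and Kleiner \cite{BK02_rigidity}, Theorem~3.4, and is used as a black box in the proof of Proposition~\ref{reduction}. So there is no ``paper's own proof'' to compare against.

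That said, your sketch is very much in the spirit of the original Bonk--Kleiner argument, which likewise rests on Hurewicz's dimension-lowering theorem rather than on any degree theory. Two remarks on your outline. First, your worry about local surjectivity at the $x_{i}$ is the right place to focus, but the degree-theoretic machinery of Subsection~\ref{localdegree} is not available here: $X$ is only assumed to be a compact metric space whose open sets have dimension at least $d$, not a generalized manifold, so Definition~\ref{genmfld} and Lemma~\ref{ld_props} do not apply. The dimension hypothesis alone is what does the work. Concretely, with the $\overline{U_{i}}$ chosen pairwise disjoint and $W$ shrunk so that $f^{-1}(W)\subseteq\bigsqcup_{i}U_{i}$, each set $A_{i}:=f(U_{i})\cap W$ is closed in $W$ (by properness of $f$ and disjointness of the closures), and a Baire/iteration argument combined with Hurewicz applied to $f|_{\overline{U_{i}}}$ forces some non-empty open $W'\subseteq W$ on which the multiplicity is constant; one does not need $y^{*}$ itself to lie in $V$, only that $V$ meets every ball in $V_{0}$. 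Second, once $m$ is locally constant on $V$, the verification that $f|_{f^{-1}(V)}$ is a covering map again uses that the $A_{i}$ are closed in $W$ together with connectedness of $W$, rather than an \emph{a priori} openness statement. With these adjustments your plan goes through, and it matches what Bonk and Kleiner do.
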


\section{Warm-up: Getting bi-Lipschitz tangents}\label{warmup}

In this section, we prove a result that is much weaker than Theorem \ref{blp}, but whose proof illustrates some of the techniques used in the proof of Theorem \ref{blp}. Nothing in this section is needed in the proof of Theorem \ref{blp}, so a reader who is solely interested in that proof can skip this section without missing anything needed later in the paper.

Let $((X,d,x), (Y,\rho,y), f)$ denote a mapping package, in the sense of Definition \ref{mappingpackage}. Thus, $(X,d)$ and $(Y,\rho)$ are metric spaces with $x\in X$ and $y\in Y$, and $f:X\rightarrow Y$ is a mapping such that $f(x)=y$. We will also assume that $f$ is Lipschitz.

Define a \textit{tangent of $f$ at $x$} to be a mapping package
$$ ((X_\infty,d_\infty, x_\infty), (Y_\infty, \rho_\infty, y_\infty), f_\infty)$$
for which there is a sequence of real numbers $\lambda_n\rightarrow\infty$ such that, in the sense of Definition \ref{mappingpackageconvergence}, we have
$$ ((X,\lambda_n d, x), (Y, \lambda_n \rho, y), f) \rightarrow ((X_\infty,d_\infty, x_\infty), (Y_\infty, \rho_\infty, y_\infty), f_\infty) $$
as $n\rightarrow\infty$.

Taking a tangent of $f$ at $x$ amounts to blowing up $f$ (as well as its source and target spaces) at the point $x$, along some sequence of scales which grows to infinity. Note that the spaces $X_\infty$ and $Y_\infty$ are also ``weak tangents'' of the spaces $X$ and $Y$, as in Definition \ref{manifoldtangents}.

We will say that $f$ has a bi-Lipschitz tangent at $x$ if, for one of its tangent mapping packages at $x$, the mapping $f_\infty$ which arises is bi-Lipschitz.

Suppose that $f$ is Lipschitz and that $X$ and $Y$ are doubling metric spaces. Suppose also that $X$ is equipped with a doubling measure, and that $x$ is a point of density of a set $E\subset X$ such that $f|_E$ is bi-Lipschitz. Then every tangent of $f$ at $x$ yields a mapping $f_\infty$ that is bi-Lipschitz. This is a standard fact, and its proof is very similar to that given in Proposition \ref{spacefilling} below.

Thus, a mapping having a positive-measure set on which it is bi-Lipschitz is a much stronger condition than a map merely having a bi-Lipschitz tangent. 

In the setting of Theorem \ref{blp}, one can give a simpler argument which shows that the mapping has a bi-Lipschitz tangent. This argument is really contained in \cite{BK02_rigidity}, though our context is slightly different.

In the proof, we will need one definition that we have not yet introduced, coming from Chapter 12 of \cite{DS97}. (This will not be used in the proof of the main Theorem \ref{blp}.)

\begin{definition}\label{DSregular}
A Lipschitz mapping $f:M\rightarrow N$ between two metric spaces is said to be \textit{David-Semmes regular} if there is a constant $C>0$ such that, for every ball $B\subseteq N$ of radius $r$, the set $f^{-1}(B)$ can be covered by at most $C$ balls of radius $Cr$.  
\end{definition}

In particular, David-Semmes regular maps always have bounded multiplicity.

\begin{prop}\label{bltangent}
Let $X$ and $Y$ be Ahlfors $s$-regular, linearly locally contractible, complete, oriented, topological $d$-manifolds, for $s,d\geq 1$. Suppose in addition that $Y$ has $d$-manifold weak tangents.

Suppose that $f:X\rightarrow Y$ has $|f(X)|>0$. Then, for some $x\in X$, $f$ has a bi-Lipschitz tangent.
\end{prop}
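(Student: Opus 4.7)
The plan is to find, via a density-point and blow-up argument, a point $x\in X$ at which the rescaled mapping package converges to a package whose limit map is bi-Lipschitz. The first ingredient is a bounded-multiplicity reduction: since $f(X)=\bigcup_{N\in\mathbb{N}}\{y\in Y:\#f^{-1}(y)\leq N\}$ and $|f(X)|>0$, one of the sets $A_N=\{y:\#f^{-1}(y)\leq N\}$ must have $|A_N|>0$. Because $f$ is Lipschitz, $E_N:=f^{-1}(A_N)$ also has positive measure, and $f|_{E_N}$ has multiplicity at most $N$. A standard Vitali-covering argument applied to the function $x\mapsto\limsup_{r\to 0}|f(B(x,r))|/r^s$ (using that $|f(E_N)|\geq |A_N|>0$ and the Ahlfors regularity of $X$) lets me pick $x\in E_N$ which is simultaneously a Lebesgue density point of $E_N$ and at which $\limsup_{r\to 0}|f(B(x,r))|/r^s>0$.

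With $x$ fixed, I choose a subsequence $r_i\to 0$ realizing the $\limsup$ and apply Proposition \ref{sublimit} to the packages $((X,r_i^{-1}d,x),(Y,r_i^{-1}\rho,f(x)),f)$ to extract a converging subsequence with limit $((X_\infty,d_\infty,x_\infty),(Y_\infty,\rho_\infty,y_\infty),f_\infty)$. By Proposition \ref{manifoldlimit}, $X_\infty$ is an LLC homology $d$-manifold; by the $d$-manifold weak-tangent hypothesis on $Y$, $Y_\infty$ is a topological $d$-manifold; and by Lemma \ref{ARlimit}, both are Ahlfors $s$-regular. Using the continuous almost-isometries of Lemma \ref{ctsalmostiso2} together with the density of $E_N$ at $x$, one checks that $f_\infty$ has multiplicity at most $N$ globally on $X_\infty$ and that $|f_\infty(B(x_\infty,1))|\geq c$ for some $c>0$.

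The main work is then applying Theorem \ref{bm} locally in a topological chart of $Y_\infty$ near $y_\infty$: composing $f_\infty$ with a chart $\psi:V\to\mathbb{R}^d$ gives a continuous bounded-multiplicity map from a compact neighborhood of $x_\infty$ (of topological dimension $d$, as an Ahlfors $s$-regular ENR homology $d$-manifold) into $\mathbb{R}^d$. The Bonk-Kleiner theorem then yields a covering over a dense open set. Since $X_\infty$ is LLC, small balls around $x_\infty$ are connected (Lemma \ref{ballsareconnected}) and carry the cohomological structure of Lemma \ref{poinc}, so the local-degree machinery of Subsection \ref{localdegree} forces the covering to have degree one near $x_\infty$, making $f_\infty$ a local homeomorphism there. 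Combining this with the Ahlfors regularity of $Y_\infty$ and the degree argument of Lemma \ref{degree} forces $f_\infty(B(x_\infty,r))\supseteq B(y_\infty,c'r)$ for all small $r$, whence $f_\infty$ is bi-Lipschitz in a neighborhood of $x_\infty$; finally, replacing $f_\infty$ by one of its own tangents at $x_\infty$ (which is itself a tangent of $f$ at $x$ by a diagonal argument) upgrades the local bi-Lipschitz property to a global one, producing the desired bi-Lipschitz tangent.

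The main obstacle I anticipate is the passage from Bonk-Kleiner's non-quantitative ``covering over a dense open set'' to a statement pinned at the specific basepoint $x_\infty$ with quantitative control. Theorem \ref{bm} says nothing about any particular point, whereas the bi-Lipschitz conclusion requires uniform control at every point and every scale. Bridging this gap requires combining the local-degree theory of Subsection \ref{localdegree}, the LLC structure via Lemma \ref{ballsareconnected}, the cohomological structure of homology $d$-manifolds via Lemma \ref{poinc}, and the diagonal tangent argument indicated above to propagate a single-point, single-scale statement into a genuine bi-Lipschitz estimate on all of the final tangent space.
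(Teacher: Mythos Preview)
Your overall architecture---blow up to a tangent, apply Bonk--Kleiner, then upgrade a local homeomorphism to a bi-Lipschitz map and blow up again---matches the paper's. But there is a genuine gap in the last step, and it stems from a difference in what you extract at the tangent level.

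The paper does not perform a bounded-multiplicity reduction on $f$. Instead it invokes Proposition~12.8 of \cite{DS97}, which produces a tangent $f_\infty$ that is \emph{David--Semmes regular}: for every ball $B\subset Y_\infty$ of radius $r$, the preimage $f_\infty^{-1}(B)$ is covered by $C$ balls of radius $Cr$. This is strictly stronger than bounded multiplicity, and it is precisely what powers the bi-Lipschitz bound. Once Bonk--Kleiner gives an open set $U$ on which $f_\infty$ is a homeomorphism, the paper picks any small ball $U'\subset U$, uses the LLC property of $Y_\infty$ to connect $f_\infty(x)$ to $f_\infty(y)$ by a compact connected set $S$ of diameter $\approx\rho_\infty(f_\infty(x),f_\infty(y))$, and then uses David--Semmes regularity to bound $\diam f_\infty^{-1}(S)$---hence $d_\infty(x,y)$---by a constant times $\diam S$. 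That is the whole co-Lipschitz argument.

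Your route does not reach this. First, the passage ``one checks that $f_\infty$ has multiplicity at most $N$ globally'' is not justified: multiplicity is not upper semicontinuous under Gromov--Hausdorff limits of mapping packages, so even if $f|_{E_N}$ has multiplicity $\leq N$ and $x$ is a density point of $E_N$, the tangent can acquire extra preimages. Second---and this is the essential gap---bounded multiplicity plus local homeomorphism does not yield a co-Lipschitz bound. Your claimed deduction ``Ahlfors regularity of $Y_\infty$ and Lemma~\ref{degree} force $f_\infty(B(x_\infty,r))\supseteq B(y_\infty,c'r)$'' has no mechanism: Lemma~\ref{degree} transfers ball-containment from a limit map to approximants, not the other direction, and it gives no uniform $c'$ across scales. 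Without David--Semmes regularity (or something quantitatively equivalent), there is no way to rule out $f_\infty$ behaving like $x\mapsto x^3$ near a point of $U$. Note also that the paper does not try to pin the homeomorphism at the basepoint $x_\infty$; it takes the second blow-up at an arbitrary point of $U'$, which sidesteps your anticipated obstacle entirely.
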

\begin{proof}

The first step is to examine the spaces $X_\infty$ and $Y_\infty$. By Proposition \ref{manifoldlimit} and the assumption that $Y$ has $d$-manifold weak tangents, we see that $X_\infty$ is a homology $d$-manifold and $Y_\infty$ is an topological $d$-manifold. We also have, by Proposition \ref{limitLLC}, that $Y_\infty$ is $(L,r_0)$-LLC, for some constants $L$ and $r_0$.

The next step is to apply Proposition 12.8 of \cite{DS97}. This says that, for some $x\in X$, we can find a tangent
$$ ((X_\infty,d_\infty, x_\infty), (Y_\infty, \rho_\infty, y_\infty), f_\infty) $$
of $f$ at $x$ such that $f_\infty$ is a David-Semmes regular map. In particular, this means that $f_\infty$ is a mapping of bounded multiplicity, in the sense of Definition \ref{defbm}.

We would now like to apply Theorem \ref{bm} to $f_\infty$. Fix a small open ball $B\subset X_\infty$. We can choose $B$ so small that $f_\infty(\overline{B})$ lies in a set $V\subset Y_\infty$ which is homeomorphic to an open set in $\mathbb{R}^d$, and which has diameter less than the contractibility radius $r_0$ of $Y_\infty$. 

Let $K=\overline{B}$. Then every open subset of $K$ contains an open subset of the homology $d$-manifold $X_\infty$ and thus has topological dimension at least $d$. Because we also know that $f_\infty$ has bounded multiplicity on $K$, we can apply Theorem \ref{bm}.

In particular, we obtain an open set $U\subset K \subset X$ such that $f_\infty$, when restricted to $U$, is a homeomorphism. Let $V' \subset f_\infty(U)$ be a small open set such that
$$ \dist(V', Y_\infty \setminus f_\infty(U)) > L\diam V' $$
and let $U' = f_\infty^{-1}(V') \cap U$.

We claim that $f_\infty$ is in fact bi-Lipschitz on $U'$. We already know it to be Lipschitz, so it suffices to establish the other bound. Fix $x,y\in U'$ and consider $f(x), f(y)\in V'\subset Y_\infty$. Let $r= \rho_\infty(f(x),f(y))$; note that $r<r_0$ by our assumptions.

First of all, there is a compact connected set $S\subset f(U)$ containing $f(x)$ and $f(y)$ such that $\diam S \leq Lr$. Indeed, by our assumptions, the compact set $\overline{B}(f(x), r)$ is contractible within $\overline{B}(f(x), Lr)$. If $H$ is the homotopy realizing this contractibility, then
$$ S = H( \overline{B}(f(x), r) \times [0,1] ) $$
contains $f(x)$ and $f(y)$ and is compact, connected, and contained in $\overline{B}(f(x), Lr) \subset f_\infty(U).$

Now consider $E=f_\infty^{-1}(S) \cap U$. Because $f_\infty$ is a homeomorphism on $U$, we have that $E$ is a compact, connected set in $U$ that contains $x$ and $y$. Because $f_\infty$ is David-Semmes regular, $E$ is contained in the union of $C$ balls of radius $CLr$ in $X_\infty$. It follows that $\diam E \leq 2C^2L r$.

Thus,
$$ d_\infty(x,y) \leq \diam E \leq 2C^2L r = 2C^2L \rho_\infty(f(x),f(y)), $$
and so $f_\infty$ is bi-Lipschitz on $U'$.

To complete the proof of the Proposition, we take another tangent of $f_\infty$ at a point of $U$. This yields a tangent of $f_\infty$ which is globally bi-Lipschitz. That this is also a tangent of $f$ itself is a standard fact.
\end{proof}


\section{Setting up the proof of Theorem \ref{blp}}
We first introduce the following notation:
$$ \tilde{B}_n(x,r) = \bigcup\{Q\in\Delta_n: Q\cap B(x,r)\neq\emptyset\}$$

By Theorem \ref{davidtheorem}, to prove Theorem \ref{blp} it suffices to show the following proposition, which is just a restatement of David's condition, formulated in Definition \ref{davidscondition}.

\begin{prop}\label{david9}
Let $d\in\mathbb{N}$ and $s>0$. Suppose $(Y,\rho)$ is LLC, Ahlfors $s$-regular, and has $d$-manifold weak tangents. For all $C_0$, $L$, $r_0$, $M$ and for all $\lambda, \gamma\geq 0$, there exist $\Lambda, \eta >0$ such that the following holds:

Let $X$ be a complete, oriented, topological $d$-manifold which is Ahlfors $s$-regular with constant $C_0$ and $(L,r_0)$-LLC. Let $I_0$ be a $0$-cube and $z:I_0\rightarrow Y$ an $M$-Lipschitz map. If $x\in X$, $n\in\mathbb{Z}$, and  $T=\tilde{B}_n(x,\Lambda 2^{n})\subseteq I_0$ satisfies $|z(T)|/|T| \geq \gamma$, then one of the following holds:

\begin{enumerate}[\normalfont (i)]
\item $z(T) \supseteq B(z(x), \lambda 2^{n})$, or
\item there is an $n$-cube $R\subset T$ such that
$$ |z(R)|/|R| \geq (1+2\eta)|z(T)|/|T|$$
\end{enumerate}
\end{prop}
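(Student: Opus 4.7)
The plan is to argue by contradiction in the manner of \cite{Da88}, Theorem 2, but with Gromov--Hausdorff convergence of mapping packages replacing compactness of Euclidean balls, Proposition \ref{manifoldlimit} and Lemma \ref{limitLLC} controlling the topology of limits, the Bonk--Kleiner theorem (Theorem \ref{bm}) substituting for Reshetnyak's theorem on open mappings, and the degree theory of Subsection \ref{localdegree} providing the final transfer back to the prelimit.

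If the proposition fails, then for some fixed $\lambda, \gamma > 0$ there are sequences $\Lambda_k \to \infty$ and $\eta_k \to 0$, spaces $X_k$ with $M$-Lipschitz maps $z_k : I_0^{(k)} \to Y$, basepoints $x_k \in X_k$, integers $n_k$, and sets $T_k = \tilde{B}_{n_k}(x_k, \Lambda_k 2^{n_k}) \subseteq I_0^{(k)}$ with $|z_k(T_k)|/|T_k| \geq \gamma$ for which both (i) and (ii) fail. I would first rescale the metrics on $X_k$ and on $Y$ by the factor $2^{-n_k}$, relabeling the dyadic $n_k$-cubes as $0$-cubes, so that each $T_k$ becomes a union of $0$-cubes of diameter $O(1)$ occupying a ball of radius $\asymp \Lambda_k \to \infty$ about $x_k$. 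Proposition \ref{sublimit} then lets me pass to a subsequence along which the rescaled mapping packages converge to a limit
\[
\bigl((X_\infty, d_\infty, x_\infty),\; (Y_\infty, \rho_\infty, y_\infty),\; z_\infty\bigr).
\]
By Lemma \ref{ARlimit}, Lemma \ref{limitLLC}, and Proposition \ref{manifoldlimit}, $X_\infty$ is an Ahlfors $s$-regular, LLC, oriented homology $d$-manifold; $Y_\infty$ is Ahlfors $s$-regular and LLC, and moreover (since $Y$ has $d$-manifold weak tangents) $Y_\infty$ is a topological $d$-manifold. The limit map $z_\infty$ is $M$-Lipschitz, and since $\Lambda_k \to \infty$ it is defined on all of $X_\infty$.

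The heart of the argument is to exploit the failure of (ii). Setting $c_k = |z_k(T_k)|/|T_k| \to c \geq \gamma$, the assumption gives $|z_k(R)|/|R| \leq (1+2\eta_k)c_k$ for every rescaled $0$-cube $R \subseteq T_k$. In the limit this becomes $|z_\infty(R)|/|R| \leq c$ for every ``limiting $0$-cube'' $R \subseteq T_\infty$, while $|z_\infty(T_\infty)|/|T_\infty| = c$. Chaining $c|T_\infty| = |z_\infty(T_\infty)| \leq \sum_i |z_\infty(R_i)| \leq c \sum_i |R_i| = c|T_\infty|$ forces equality throughout, so each $|z_\infty(R)| = c|R|$ and the images of distinct $0$-cubes are essentially disjoint. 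Iterating this via a tangent-of-a-tangent argument (applying Proposition \ref{sublimit} to blow up $z_\infty$ at its own points, using Proposition \ref{manifoldlimit} and stability of the $d$-manifold weak tangent hypothesis for $Y$) upgrades the disjointness to arbitrarily fine scales and yields that $z_\infty$ has bounded multiplicity on every compact subset. Fixing a coordinate ball $V \subseteq Y_\infty$ (available since $Y_\infty$ is a topological $d$-manifold) and a compact $K \subseteq X_\infty$ with $z_\infty(K) \subseteq V$, every open subset of $K$ has topological dimension $\geq d$ (being open in the homology $d$-manifold $X_\infty$), so Theorem \ref{bm} supplies an open dense $U \subseteq K$ on which $z_\infty|_U$ is a covering map, hence a local homeomorphism.

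On $U$ the local degree $\mu(\cdot, D, z_\infty)$ for small domains $D \subseteq U$ equals $\pm 1$ by Lemma \ref{ld_props}. Combining the local constancy of the degree with the LLC structure of $Y_\infty$ (via Lemmas \ref{ballsareconnected} and \ref{poinc}), I would propagate nonvanishing degree from $U$ into a neighborhood of $y_\infty = z_\infty(x_\infty)$, obtaining a ball $B(y_\infty, r') \subseteq z_\infty(\overline{B}(x_\infty, R_0))$ with $r' > 2\lambda$ and $R_0$ a fixed constant. Lemma \ref{degree} then transfers this surjectivity to the prelimit: for all sufficiently large $k$, $z_k(T_k) \supseteq B(z_k(x_k), (r'/2) 2^{n_k}) \supseteq B(z_k(x_k), \lambda 2^{n_k})$, contradicting the failure of (i). The principal obstacle will be the bounded-multiplicity step together with the degree propagation: the scale-$1$ density and disjointness information must be refined to a pointwise multiplicity bound via the diagonal tangent argument, and the degree propagation from the Bonk--Kleiner covering region to the basepoint $y_\infty$ must carefully use the topological $d$-manifold structure of $Y_\infty$ together with the duality computation in Lemma \ref{poinc} to avoid cancellation of signs.
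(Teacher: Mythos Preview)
Your overall architecture is right---Gromov--Hausdorff compactness, Proposition \ref{manifoldlimit}, Bonk--Kleiner, degree transfer via Lemma \ref{degree}---and matches the paper's strategy. But the central measure-theoretic step, as you describe it, does not go through, and the paper handles this point quite differently.

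The gap is in the passage ``In the limit this becomes $|z_\infty(R)|/|R| \leq c$ for every limiting $0$-cube $R$, while $|z_\infty(T_\infty)|/|T_\infty| = c$.'' Under your rescaling by $2^{-n_k}$, the set $T_k$ has radius $\asymp \Lambda_k \to \infty$, so $T_\infty = X_\infty$ and $|T_\infty| = \infty$; the ratio you write is undefined, and the chain $c|T_\infty| \leq \cdots \leq c|T_\infty|$ collapses. More fundamentally, even on bounded pieces it is not clear that the non-expansion bound $|z_k(R)| \leq (1+2\eta_k)c_k|R|$ survives in the limit: Hausdorff measure behaves only semicontinuously under this convergence, and not in the direction that would yield $|z_\infty(R)| \leq c|R|$. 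The paper explicitly flags this in the Remark after Lemma \ref{wboundedmult}: ``it is not clear that this `non-expanding' property of the maps $w_k$ passes directly to the limit map $w$.'' Your proposed tangent-of-a-tangent repair inherits the same problem. A secondary issue is the claim $r' > 2\lambda$: the compactness argument produces only \emph{some} $r'>0$, with no control in terms of $\lambda$.

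The paper resolves both issues as follows. First, it inserts an intermediate statement (Proposition \ref{reduction}) in which the region $T$ has a \emph{fixed} radius after rescaling and the cube scale shrinks; a short computation then reduces Proposition \ref{david9} to Proposition \ref{reduction} by choosing $\Lambda$ large in terms of the $\tau$ that compactness eventually produces. This decoupling is exactly what allows a qualitative compactness output ($\exists\,\tau>0$) to meet the quantitative requirement ($\lambda$ prescribed in advance). Second---and this is the key technical point---bounded multiplicity of the limit map (Lemma \ref{wboundedmult}) and the single-preimage property (Lemma \ref{1preimage}) are proven by \emph{counting cubes in the prelimit} $w_k:S_k\to Y_k$, using the non-expansion bound there (where it is an assumption, not a limit statement) and pushing the conclusion across via the almost-isometries of Lemma \ref{ctsalmostiso}. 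No constant-Jacobian statement is ever asserted about the limit map, and no further tangent is taken. From these two lemmas one shows $w$ is open (Lemma \ref{open}, via local degree), hence a homeomorphism on a small ball, and Lemma \ref{degree} then provides the ball in the image of $w_k$, contradicting (\ref{noballinw}).
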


We emphasize that in Proposition \ref{david9} the constants $\Lambda$ and $\eta$ depend only on the ``input'' constants $\lambda$ and $\gamma$, as well as the ``data'' $d, s, C_0, L, r_0, M$, and the space $Y$. 

We will actually prove the following similar statement, which implies Proposition \ref{david9}. (This is analogous to Lemma 4 of \cite{Da88}.)

For $r>0$, define $n_r$ to be the largest integer $n$ such that
\begin{equation}\label{nr}
10 C_0 2^n \leq r.
\end{equation} 

\begin{prop}\label{reduction}
Let $d\in\mathbb{N}$ and $s>0$. Suppose $(Y,\rho)$ is LLC, Ahlfors $s$-regular, and has $d$-manifold weak tangents. For all $C_0, L, r_0$ and for all $\gamma> 0$, there exist $\tau, \sigma >0$ such that the following holds:

Let $X$ be a complete, oriented, topological $d$-manifold which is Ahlfors $s$-regular with constant $C_0$ and $(L,r_0)$-LLC. If $v\in X$, $0<r\leq C_0$, $T=\tilde{B}_{n_r}(v,r)$, and $z:T\rightarrow Y$ is $1$-Lipschitz satisfying $|z(T)|/|T| \geq \gamma$, then one of the following holds:
\begin{enumerate}[\normalfont (i)]
\item $z(T) \supseteq B(z(v), \tau r)$, or
\item there is a dyadic cube $R\subset T$ of diameter at least $\tau r$ such that
$$ |z(R)|/|R| \geq (1+\sigma)|z(T)|/|T|$$
\end{enumerate}
\end{prop}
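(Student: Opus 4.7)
The plan is a Gromov--Hausdorff compactness argument in the spirit of Lemma 4 of \cite{Da88}, using the tools developed in Sections~2 and~3 in place of Euclidean technology. Assume the proposition fails for some data $d, s, C_0, L, r_0, \gamma$. Then there is a sequence of counterexamples $(X_k, v_k, r_k, z_k)$ with failure parameters $\tau_k = \sigma_k = 1/k$. Rescale each $(X_k, d_k)$ and $(Y, \rho)$ by $r_k^{-1}$, so that $T_k := \tilde{B}_{n_{r_k}}(v_k, r_k)$ has bounded diameter and bounded measure, and $z_k$ remains $1$-Lipschitz. By Proposition~\ref{sublimit} and Lemma~\ref{subsetconvergence}, passing to a subsequence yields convergence of mapping packages
\[
\bigl((T_k, v_k), (Y, z_k(v_k)), z_k\bigr) \to \bigl((T_\infty, v_\infty), (Y_\infty, y_\infty), z_\infty\bigr).
\]
By Proposition~\ref{manifoldlimit}, the ambient limit $X_\infty$ is an LLC homology $d$-manifold. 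By the $d$-manifold weak tangent assumption on $Y$, together with Lemmas~\ref{ARlimit} and~\ref{limitLLC}, $Y_\infty$ is an LLC, Ahlfors $s$-regular, topological $d$-manifold. The limit $z_\infty : T_\infty \to Y_\infty$ is $1$-Lipschitz and satisfies $|z_\infty(T_\infty)| = \mu_\infty |T_\infty|$ with $\mu_\infty := \lim \mu_k \geq \gamma$.

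Next I derive a ``constant Jacobian'' property for $z_\infty$. The failure of (ii) for the $k$-th counterexample says $|z_k(R)|/|R| \leq (1+\sigma_k)\mu_k$ for every dyadic sub-cube $R \subseteq T_k$ of rescaled diameter at least $\tau_k$. Passing this to the limit---approximating each limit sub-region by sequences of dyadic cubes via Lemmas~\ref{subsetconvergence} and~\ref{ARlimit}---yields $|z_\infty(R)|/|R| \leq \mu_\infty$ for each sub-region $R$. Partitioning $T_\infty$ into such sub-regions $\{R_\alpha\}$ and summing,
\[
\mu_\infty |T_\infty| = |z_\infty(T_\infty)| \leq \textstyle\sum_\alpha |z_\infty(R_\alpha)| \leq \mu_\infty \sum_\alpha |R_\alpha| = \mu_\infty |T_\infty|,
\]
so all inequalities are equalities and $|z_\infty(R_\alpha)| = \mu_\infty |R_\alpha|$ for each $\alpha$. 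A packing argument now shows that $z_\infty$ is David--Semmes regular (Definition~\ref{DSregular}): the preimage $z_\infty^{-1}(B(y,r))$ has measure at most $C\gamma^{-1} r^s$, so any maximal $r$-separated subset has cardinality at most a constant $N_0(\gamma, \text{data})$ by Ahlfors regularity of $X_\infty$, and the corresponding balls of radius $2r$ cover the preimage. In particular, $z_\infty$ has bounded multiplicity.

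The remainder of the argument follows the pattern of Proposition~\ref{bltangent}. Choose a small $R>0$ and let $K$ be the closure of $B(v_\infty, R)$ in $X_\infty$, small enough that $z_\infty(K)$ lies in a chart of $Y_\infty$ at $y_\infty$ with diameter less than the contractibility radius of $Y_\infty$. Every non-empty relatively open subset of $K$ meets $B(v_\infty, R)$ (dense in $K$ by construction), hence contains an open subset of the homology $d$-manifold $X_\infty$, and so has topological dimension at least $d$. Theorem~\ref{bm}, applied to $z_\infty|_K$ composed with the chart, yields an open dense $U \subseteq K$ on which $z_\infty$ is a local homeomorphism. As in the second half of the proof of Proposition~\ref{bltangent}, David--Semmes regularity together with LLC of $Y_\infty$ upgrades this to bi-Lipschitzness on an open subset $U' \subseteq U$; hence for any $p \in U'$ with a sufficiently small $\overline{B}(p, 4\delta) \subseteq U'$, the image $z_\infty(B(p,\delta))$ contains a ball $B(z_\infty(p), \delta')$ with $\delta' > 0$ depending only on $\delta, \gamma$, and the data.

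Finally, pick $p \in U'$ close to $v_\infty$, possible by density. Apply Lemma~\ref{degree} with $A = B(p, 4\delta)$: for all $k$ sufficiently large,
\[
z_k\bigl(B_{X_k}(f_k(p), 2\delta)\bigr) \supseteq B_{Y_k}\bigl(z_k(f_k(p)), \delta'/2\bigr),
\]
where $f_k$ is the continuous almost-isometry of Lemma~\ref{ctsalmostiso}. Since $p$ is close to $v_\infty$, $f_k(p)$ is close to $v_k$, and since $z_k$ is $1$-Lipschitz, $z_k(f_k(p))$ is close to $z_k(v_k)$; also $B_{X_k}(f_k(p), 2\delta) \subseteq T_k$ for $k$ large. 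Combining these yields $z_k(T_k) \supseteq B_{Y_k}(z_k(v_k), \delta'/4)$ for all large $k$, in the rescaled metric. As $\tau_k \to 0$, this contradicts the failure of (i). The main obstacle I anticipate is the careful derivation of the constant-Jacobian equality at the limit: dyadic cubes in $T_k$ do not pass directly to cubes in $T_\infty$, so one must carry along the dyadic structure via the almost-isometries of Propositions~\ref{almostisos1} and~\ref{almostisos}, and verify that the measure and containment estimates survive this process.
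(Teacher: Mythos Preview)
Your overall architecture matches the paper's: contradiction, rescale, pass to a limit mapping package, show the limit map is a local homeomorphism near the basepoint, and use Lemma~\ref{degree} to push a ball back into $z_k(T_k)$ for large $k$. The difference, and the gap, is exactly where you flag it.

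You try to establish a ``constant Jacobian'' property $|z_\infty(R)|/|R| \leq \mu_\infty$ for sub-regions of the limit, and then run the packing and bounded-multiplicity arguments entirely in $(X_\infty, Y_\infty)$. The paper explicitly avoids this. In the Remark following Lemma~\ref{wboundedmult} the author notes that it is \emph{not clear} that the non-expanding property $|w_k(R)|/|R| \leq (1+1/k)\gamma$ passes to the limit map $w$: Hausdorff measure of images does not behave well enough under Gromov--Hausdorff convergence to justify your limiting inequality. Instead, the paper proves both bounded multiplicity (Lemma~\ref{wboundedmult}) and the crucial one-preimage statement (Lemma~\ref{1preimage}) by pulling back to the approximating spaces $X_k, Y_k$ via the almost-isometries, using Lemma~\ref{bigcubes} to get genuine dyadic cubes of the right scale inside $S_k$, and running the measure computation there with the actual inequality $|w_k(Q)|/|Q| \leq (1+1/k)\gamma$. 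Only after those two lemmas are in hand does the paper return to the limit.

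There is also a structural difference downstream. You upgrade the Bonk--Kleiner local homeomorphism to bi-Lipschitz on some open $U'$ (as in Proposition~\ref{bltangent}) and then pick $p \in U'$ near $v_\infty$. The paper does more: Lemma~\ref{1preimage} gives global injectivity on $K$ (not just local), and Lemma~\ref{open} uses local degree theory on the oriented generalized $d$-manifold to show $w|_A$ is open, hence a homeomorphism on a ball $A$ \emph{centered at} $v$. This yields $w(B(v,r)) \supseteq B(w(v), r')$ directly, so Lemma~\ref{degree} produces a ball centered exactly at $w_k(v_k)$ without any proximity juggling. Your route via density of $U'$ risks a circularity (the size $\delta'$ of the image ball depends on the size of $U'$, which you are simultaneously trying to place near $v_\infty$), and in any case rests on the unjustified limit inequality.
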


As before, the constants $\tau$ and $\sigma$ in Proposition \ref{reduction} depend only on $d, s, C_0, L, r_0$, and  $\gamma$, as well as the space $Y$.

\begin{lemma}
Proposition \ref{reduction} implies Proposition \ref{david9}.
\end{lemma}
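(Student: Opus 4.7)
The plan is a rescaling argument that matches the parameters of the two propositions. Given inputs $\lambda,\gamma$ and data $C_0,L,r_0,M$, I will first normalize the setup so that Proposition~\ref{reduction} applies, then choose $\Lambda,\eta$ so its conclusions translate into those of Proposition~\ref{david9}.

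First, replace $\rho$ on $Y$ by $\tilde\rho:=\rho/M$, making $z$ into a $1$-Lipschitz map. This preserves Ahlfors $s$-regularity and the weak-tangent property (which is scale-invariant), and changes the LLC radius to $r_0/M$. The Hausdorff $s$-measure on $Y$ scales by $M^{-s}$, so the hypothesis $|z(T)|/|T|\ge\gamma$ becomes $|z(T)|/|T|\ge\gamma':=\gamma M^{-s}$. Applying Proposition~\ref{reduction} with parameter $\gamma'$ and the new data of $Y$ yields constants $\tau,\sigma>0$. Set $\eta:=\sigma/2$ and
\[ \Lambda:=\max\bigl(10C_0,\;C_0/\tau,\;\lambda/(\tau M)\bigr), \]
chosen so that (a)~$T$ has enough cubes to fit the $\tilde B_{n_r}(x,r)$ format required by Proposition~\ref{reduction}, (b)~the cube produced in its conclusion~(ii) has diameter at least $C_0\cdot 2^n$ (so sits at dyadic level $\geq n$), and (c)~the ball produced in its conclusion~(i) has radius at least $\lambda 2^n$ after undoing the rescaling.

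Given $T=\tilde B_n(x,\Lambda 2^n)\subseteq I_0$, rescale the source metric by $\alpha_X:=2^{-n-K}$, where $K$ is chosen so that $r:=\Lambda\cdot 2^{-K}\le C_0$ and $n_r=-K$ in the dyadic decomposition of $(X,\alpha_X d)$. In this rescaled metric, $T$ is precisely $\tilde B_{-K}(x,r)$, so Proposition~\ref{reduction} applies. Its conclusion~(i) gives $z(T)\supseteq B_{\tilde\rho}(z(x),\tau r)$; unwinding the rescalings, this is $B_\rho(z(x),\tau\Lambda M\cdot 2^n)\supseteq B_\rho(z(x),\lambda 2^n)$, which is conclusion~(i) of Proposition~\ref{david9}. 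Its conclusion~(ii) produces a dyadic cube $R\subseteq T$ of diameter $\ge\tau r$ in the rescaled metric, hence $\diam_d R\ge\tau\Lambda\cdot 2^n\ge C_0\cdot 2^n$, so $R$ sits at a level $\ge n$. Partitioning $R$ into its $n$-subcubes $\{R_i\}$ and using $|z(R)|\le\sum_i|z(R_i)|$ together with $|R|=\sum_i|R_i|$, some $R_i$ must satisfy
\[|z(R_i)|/|R_i|\ge|z(R)|/|R|\ge(1+\sigma)|z(T)|/|T|=(1+2\eta)|z(T)|/|T|,\]
which is conclusion~(ii) of Proposition~\ref{david9}.

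The subtle bookkeeping point is that the identity $r=\Lambda 2^{-K}$ combined with the definition of $n_r$ essentially forces $\Lambda\in[10C_0,20C_0)$ for the exact matching of $T$ with $\tilde B_{-K}(x,r)$; when the max defining $\Lambda$ exceeds $20C_0$ (which can occur for large $\lambda$ or small $\tau$), this direct matching breaks. In that case one has to apply Proposition~\ref{reduction} at an auxiliary level, replacing $T$ by a union-of-larger-cubes superset whose density ratio is still comparable to $\gamma$ (up to a constant from Ahlfors regularity) and then carefully transferring the resulting conclusions back to $T$ using the quasi-round property of dyadic cubes. Reconciling these competing constraints is the main technical obstacle in the reduction.
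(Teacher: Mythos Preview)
Your reduction has a genuine gap in the passage from Proposition~\ref{reduction} back to Proposition~\ref{david9}. The direct rescaling you describe only achieves an exact match $T=\tilde B_{n_r}(x,r)$ when $\Lambda\in[10C_0,20C_0)$, as you note. But your choice $\Lambda\ge C_0/\tau$ and $\Lambda\ge\lambda/(\tau M)$ forces $\Lambda$ to be large whenever $\tau$ is small (and you have no lower bound on $\tau$), so the constraints are typically incompatible. Your proposed repair, passing to a union-of-larger-cubes \emph{superset} $T''\supseteq T$, does not work: first, $T''$ need not be contained in $I_0$, so $z$ may not even be defined there; second, even granting that, the conclusions of Proposition~\ref{reduction} on $T''$ do not transfer back to $T$. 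Conclusion~(i) would give $z(T'')\supseteq B(z(x),\tau r)$, not $z(T)\supseteq B(z(x),\tau r)$; conclusion~(ii) would produce a cube $R\subseteq T''$, not $R\subseteq T$.

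The paper's approach avoids this by passing instead to a \emph{subset} $T'=\tilde B_{n_r}(x,r)\subseteq T$ with $r=\Lambda 2^n/10$, and arguing by contradiction. Since $T'\subseteq T\subseteq I_0$, the map $z$ is defined on $T'$, and any conclusion on $T'$ (ball in the image, or a large-ratio subcube) transfers directly to $T$. The price is that one must control the density $|z(T')|/|T'|$ from below. This is where the contrapositive structure is essential: assuming conclusion~(ii) of Proposition~\ref{david9} fails for every $n$-cube in $T$, one gets $|z(Q)|/|Q|\le(1+2\eta)|z(T)|/|T|$ for all such $Q$, and summing over the $n$-cubes in $T\setminus T'$ yields $|z(T')|\ge(1-C'\eta)|T'|\cdot|z(T)|/|T|$. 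For $\eta$ small this gives the density hypothesis needed to apply Proposition~\ref{reduction} to $T'$. This use of the assumed failure of~(ii) to propagate density control to the subset is the missing idea in your argument.
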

\begin{proof}

Suppose that Proposition \ref{reduction} is true but that Proposition \ref{david9} fails. The failure of Proposition \ref{david9}, first of all, implies the existence of dimensions $d\in\mathbb{N}$, $s>0$ and a space $(Y,\rho)$. It also implies that for some data $C_0, L, r_0, M$, some constants $\lambda, \gamma>0$ and every $\Lambda, \eta>0$, there exists an Ahlfors $s$-regular, LLC, complete oriented topological $d$-manifold $X$ (with data given by $C_0, L, r_0$), a $0$-cube $I_0\subset X$, and
$$T = \tilde{B}_n(x, \Lambda 2^n) \subset I_0, $$
as well as an $M$-Lipschitz $z:T\rightarrow Y$ with $|z(T)|/|T|\geq \gamma$ such that 
\begin{itemize}
\item $z(T) \not\supset B(z(x), \lambda 2^n)$, and
\item for every $n$-cube $R\subset T$,
$$ |z(R)|/|R| \leq (1+2\eta) |z(T)|/|T|.$$
\end{itemize}

We now reduce to the $1$-Lipschitz case by letting $\tilde{z}:T\rightarrow (Y,\frac{1}{M}\rho)$. Then $\tilde{z}:T\rightarrow (Y,\frac{1}{M}\rho)$ satisfies
\begin{itemize}
\item $|\tilde{z}(T)|/|T|\geq \tilde{\gamma} = \gamma/M^s $
\item $\tilde{z}(T) \not\supset B(z(x), \lambda 2^n/M)$, and
\item for every $n$-cube $R\subset T$,
$$ |\tilde{z}(R)|/|R| \leq (1+2\eta) |\tilde{z}(T)|/|T|.$$
\end{itemize}

Let $T' = \tilde{B}_{n_r}(x,r)$ for $r=\Lambda 2^{n}/10$.  Note that, as $T\subset I_0$, we have $\diam{T} \leq \diam{I_0}$ and so $r\leq C_0$. Also note that $T'\subseteq T$, by a simple triangle inequality argument.

If $\Lambda>20C_0$, then $T'$, and therefore also $T\setminus T'$, is a disjoint union of $n$-cubes. Indeed, in this case $n_r\geq n$, and $T$ is a disjoint union of $n_r$-cubes, each of which is a disjoint union of $n$-cubes. 

It follows from the second property of $\tilde{z}$ above that
$$ \frac{|\tilde{z}(T')|}{|T'|} \leq (1+2\eta)  \frac{|\tilde{z}(T)|}{|T|} $$
and
$$ \frac{|\tilde{z}(T\setminus T')|}{|T\setminus T'|} \leq (1+2\eta) \frac{|\tilde{z}(T)|}{|T|}. $$
Therefore
\begin{align*}
|\tilde{z}(T')| &\geq |\tilde{z}(T)| - |\tilde{z}(T\setminus T')|\\
&\geq |\tilde{z}(T)| - (1+2\eta)\frac{|\tilde{z}(T)|}{|T|}|T\setminus T'|\\
&= \left(|T| - (1+2\eta)|T\setminus T'|\right)\frac{|\tilde{z}(T)|}{|T|}\\
&= \left((1+2\eta)|T'| - 2\eta|T|\right)\frac{|\tilde{z}(T)|}{|T|}\\
&\geq \left((1+2\eta)|T'| - 2\eta C|T'|\right)\frac{|\tilde{z}(T)|}{|T|}\\
&\geq (1-C'\eta)|T'|\frac{|\tilde{z}(T)|}{|T|}\\
&\geq \frac{\tilde{\gamma}}{3} |T'|
\end{align*}
if $\eta$ is small depending on $\gamma$. (Here $C$ and $C'$ depend only on the Ahlfors regularity constants $s$ and $C_0$.)

Now, apply Proposition \ref{reduction} to $\tilde{z}:T'\rightarrow (Y,\frac{1}{M}\rho)$ with $\gamma$ as $\tilde{\gamma}/3$. We obtain $\tau$ and $\sigma$. Note that $\tau$ and $\sigma$ depend only on the data $d,s,C_0,L,r_0, M$, the space $Y$, and the constant $\gamma$.

If $\Lambda>\max\{\frac{10 \lambda}{M\tau}, \frac{10C_0}{\tau}\}$ and $\eta$ is sufficiently small relative to $\sigma$, we get that either
\begin{itemize}
\item $\tilde{z}(T)\supseteq \tilde{z}(T') \supseteq B(\tilde{z}(x), \tau \Lambda 2^n/10) \supset B(\tilde{z}(x), \lambda 2^n/M)$, or
\item there is a dyadic cube $R\subset T'$ of diameter at least $\tau \Lambda 2^{n}$ such that
$$ |\tilde{z}(R)|/|R| \geq (1+\sigma)|\tilde{z}(T')|/|T'| \geq  (1+\sigma)(1-C'\eta)|\tilde{z}(T)|/|T| \geq (1+3\eta)|\tilde{z}(T)|/|T|$$ 
\end{itemize}
In the first case we contradict the assumption that the first conclusion in Proposition \ref{david9} fails. In the second case, note that $R$ is a cube at scale larger than $n$ (because $\tau \Lambda 2^n > C_0 2^n$) and therefore a disjoint union of $n$-cubes. At least one of those $n$-cubes $R'$ must then also satisfy
$$ |\tilde{z}(R')|/|R'| \geq (1+3\eta)|\tilde{z}(T)|/|T|,$$
which contradicts the assumption that the second conclusion of Proposition \ref{david9} fails.
\end{proof}

\section{Proof of Proposition \ref{reduction}}

We will use the notation of the previous section; recall especially the definition of $n_r$ from (\ref{nr}).

Suppose now that Proposition \ref{reduction} is false. Then there exists constants $d,s,C_0, L, r_0, \gamma$, and a space $(Y, \rho)$ that is LLC, Ahlfors $s$-regular and has $d$-manifold weak tangents, such that the following holds:

For every $k\in \mathbb{N}$, there are spaces $Z_k$ that are Ahlfors $s$-regular with constant $C_0$ and that are $(L,r_0)$-LLC, complete oriented topological $d$-manifolds. In addition, there are radii $0<r_k\leq C_0$, subsets $T_k = \tilde{B}_{n_{r_k}}(v_k, r_k)\subset Z_k$ and $1$-Lipschitz maps $z_k:Z_k\rightarrow Y$ satisfying $|z_k(T_k)|_Y \geq \gamma |T_k|_{Z_k}$ and such that:
\begin{enumerate}[(i)]
\item $z_k(T_k)\not\supseteq B(z_k(v_k), \frac{1}{k}r_k)$, and
\item for every dyadic cube $R\subseteq T_k$ of diameter at least $r_k/k$, we have
$$ \frac{|z_k(R)|}{|R|} \leq \left(1+\frac{1}{k}\right)\frac{|z_k(T_k)|}{|T_k|}.$$
\end{enumerate} 

Let $X_k$ be the metric space
$$ \left(Z_k, \frac{1}{r_k} d_{Z_k} \right)$$

Let $S_k\subset X_k$ denote the corresponding rescaled version of $T_k$. Then
$$ B(v_k,1)\subseteq S_k \subseteq B(v_k, 2). $$
and
$$ C_0 \leq |S_k|_{X_k} \leq 2^s C_0 $$

Note that $S_k$ has a dyadic cube decomposition given by the rescaled versions of cubes in $T_k$. The following additional technical fact about this decomposition of $S_k$ is obvious but useful.

\begin{lemma}\label{bigcubes}
For every $0<r<1/20$ and every $k\in\mathbb{N}$, the set $S_k$ can be written as a disjoint union of measurable sets $R_j$ satisfying
\begin{itemize}
\item $(2C_0^2)^{-1} r \leq \diam R_j \leq r$, and
\item $(2C_0^2)^{-s} r^s \leq |R_j|_{X_k} \leq r^s$
\end{itemize}
\end{lemma}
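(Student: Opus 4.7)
The plan is to locate an appropriate dyadic scale in the cube system of $Z_k$ so that, after the $1/r_k$ rescaling, the corresponding cubes have diameter of order $r$. Then we decompose the $n_{r_k}$-cubes of $T_k$ into sub-cubes at this scale; the rescaled versions of these sub-cubes will serve as the $R_j$.

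Concretely, I would take $m$ to be the largest integer with $C_0 \cdot 2^m \leq r\, r_k$, so that the refined cubes have the desired size in $X_k$. The first step is to verify $m < n_{r_k}$, which guarantees that the $m$-cubes are strict refinements of the $n_{r_k}$-cubes. By the definition of $n_{r_k}$ we have $2^{n_{r_k}} > r_k/(20 C_0)$, and by the hypothesis $r < 1/20$ and the choice of $m$ we have $2^m \leq r\, r_k/C_0 < r_k/(20C_0)$; combining these yields $m < n_{r_k}$, as needed.

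Since $T_k = \tilde{B}_{n_{r_k}}(v_k, r_k)$ is by definition a disjoint union of $n_{r_k}$-cubes in $Z_k$, and since the dyadic system is nested, each such $n_{r_k}$-cube decomposes into a disjoint union of $m$-cubes. Let $\{R_j\}$ consist of every $m$-cube lying inside $T_k$, viewed in $X_k$. These are pairwise disjoint and their union is $S_k$. The diameter bound is then immediate: the cube properties give $\diam R_j \in [C_0^{-1} 2^m/r_k,\, C_0\, 2^m/r_k]$ in $X_k$, and by the choice of $m$ we have $C_0\, 2^m/r_k \leq r$ and $C_0\, 2^{m+1}/r_k > r$, which together force $\diam R_j \in \bigl[(2C_0^2)^{-1} r,\, r\bigr]$. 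The measure bound follows analogously from $C_0^{-1} 2^{sm} \leq |R_j|_{Z_k} \leq C_0\, 2^{sm}$ combined with the scaling identity $|R_j|_{X_k} = |R_j|_{Z_k}/r_k^s$.

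There is no genuine obstacle here — the lemma is essentially a careful bookkeeping exercise about choosing the right level in a dyadic cube system and exploiting its nesting. The only point that requires any thought is the verification $m < n_{r_k}$, which is precisely where the numerical assumption $r < 1/20$ enters. I would keep the constants generic (absorbing factors of $C_0$ as is already the custom in Subsection \ref{dyadiccubes}) rather than optimize them.
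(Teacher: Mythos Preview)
Your proposal is correct and follows essentially the same route as the paper: choose the largest integer $m$ with $C_0 2^m \leq r r_k$, verify $m \leq n_{r_k}$ using $r<1/20$, and decompose $T_k$ into $m$-cubes whose rescalings in $X_k$ give the $R_j$. The paper's proof is nearly identical (with your $m$ written as $n$), and you have spelled out the diameter and measure estimates that the paper leaves implicit.
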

\begin{proof}
Choose $n$ such that
$$ C_0 2^n \leq rr_k \leq 2C_0 2^n $$
If $r<1/20$, then 
$$ 2C_0 2^n \leq 2rr_k < r_k/10 \leq  2C_0 2^{n_{r_k}}$$
and so $n\leq n_{r_k}$.

Therefore, we can write $T_k$ as a disjoint union of dyadic cubes in $\Delta_n$. The rescaled versions of these cubes in $S_k$ are now immediately seen to satisfy the required properties.
\end{proof}

For each $k$, we also consider the rescaled target spaces
$$Y_k = \left(Y, \rho_k\right) = \left(Y, \left(\frac{\gamma|T|}{|z_k(T)|}\right)^{1/s} \frac{1}{r_k}\rho\right).$$

Let $w_k:S_k\rightarrow Y_k$ be the map $z_k$ (making the natural identification between points of $Z_k$ and points of $X_k$). Then each $w_k$ is Lipschitz with constant
$$ \left(\frac{\gamma|T|}{|z_k(T)|}\right)^{1/s} \leq 1. $$
In addition, the maps $w_k$ satisfy 
$$|w_k(S_k)| = \gamma |S_k|$$
for all $k$. (The extra rescaling factor $\left(\frac{\gamma|T|}{|z_k(T)|}\right)^{1/s}$ in the target $Y$ is to ensure this last convenient fact.)

Finally, the two important properties of $z_k$ pass to $w_k$ in the following way:
\begin{equation}\label{noballinw}
w_k(S_k)\not\supseteq B(w_k(v_k), \frac{1}{k})
\end{equation}
and for every dyadic cube $R\subseteq S_k$ of diameter at least $1/k$, we have
\begin{equation}\label{wnoexpand}
\frac{|w_k(R)|}{|R|} \leq \left(1+\frac{1}{k}\right)\frac{|w_k(S_k)|}{|S_k|} = \left(1+\frac{1}{k}\right)\gamma
\end{equation}

Let $F_k = \overline{B(v_k, 1/2)} \subset S_k \subset X_k$. We may now consider the following sequence of mapping packages (see Definition \ref{mappingpackage}):
$$ \left\{((F_k, d_{X_k}, v_k) , (Y, \frac{1}{r_k} \rho_k, w_k(v_k)), w_k)\right\}. $$

Note that all the spaces in the above mapping packages are complete and uniformly doubling, and the mappings $w_k$ are uniformly $1$-Lipschitz. By applying Proposition \ref{sublimit}, we obtain a subsequence of this mapping package that converges to a limit $\left\{(F, d, v), (M, d', q)), w)\right\}$. In addition, by Lemma \ref{subsetconvergence} we may assume that along this subsequence we also have the convergence of the sequence of ambient source spaces $(X_k, d_{X_k}, v_k)$ to a space $(X,d,v)$ that contains $F$ as a subset. (We continue to index this sequence by the original parameter $k$.)

The following diagram may be useful for keeping track of this convergence. The dotted arrows represent convergence of spaces in the sense of Definition \ref{spaceconvergence}.

\begin{equation}\label{convergence}
\begin{tikzcd}[column sep=scriptsize]
X_k\arrow[dotted]{d}&\supset& S_k &\supset& F_k \arrow[dotted]{d}\arrow{r}{w_k} &Y_k\arrow[dotted]{d}\\
X &\phantom{\supset} &\phantom{S} &\supset &F \arrow{r}{w} &M
\end{tikzcd}
\end{equation}

We now know, by Proposition \ref{manifoldlimit}, that the space $X$ is an LLC, Ahlfors $s$-regular, homology $d$-manifold. In addition, by Lemmas \ref{ARlimit} and \ref{limitLLC} and the assumption that $Y$  has $d$-manifold weak tangents, the space $M$ is an Ahlfors $s$-regular, LLC, topological $d$-manifold. Finally, it is clear that the set $F\subset X$ contains the open ball $B(v,1/2)$.


The space $X$ is a generalized manifold (see Definition \ref{genmfld}), so we may now fix an open subset of $B(v,1/2)\subset F$ which has $H^d_c$ isomorphic to $\mathbb{Z}$, i.e. is itself an oriented generalized $d$-manifold. We will only work in this oriented subset of $X$ from now on.

Let $A$ be a small open ball in $X$ (of diameter smaller than half the contractibility radius of $X$) centered at $v$ and compactly contained in this oriented open subset. 
Because $M$ is a manifold and $w$ is Lipschitz, by making $A$ small enough, we may assume that $w(\overline{A})$ lies in a single chart of $M$. Let $K=\overline{A}$
, which is compact.

We now investigate the limit map $w$.

\begin{lemma}\label{wboundedmult}
The map $w|_K$ is of bounded multiplicity on $K$. In other words, there exists $N\in\mathbb{N}$ such that for every $x\in\mathbb M$, there are at most $N$ points in $w^{-1}(x) \cap K$.
\end{lemma}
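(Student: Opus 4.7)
\emph{Proof plan.} I will argue by contradiction, exploiting the two measure conditions on $w_k$ to force a uniform bound on the multiplicity. Suppose $x_1, \dots, x_N \in K$ are distinct with $w(x_i) = y$ for a common $y \in M$, where $N$ is a parameter to be chosen, and let $\delta = \min_{i \neq j} d_X(x_i, x_j) > 0$. Using the continuous almost-isometries $f_k, \tilde{f}_k$ from Lemma \ref{ctsalmostiso2} (applied with $R$ slightly larger than $\diam K$), set $x_i^k = f_k(x_i) \in X_k$ and $y^k = \tilde{f}_k(y) \in Y_k$. For all $k$ sufficiently large (depending on $\delta$), the balls $B_i^k := B(x_i^k, \delta/10)$ are pairwise disjoint in $X_k$ and $d_{Y_k}(w_k(x_i^k), y^k) < \delta/100$; combined with the $1$-Lipschitz property of $w_k$ this gives $w_k(B_i^k) \subset B(y^k, \delta/5)$.

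The heart of the argument is a pair of complementary consequences of the conditions $|w_k(S_k)| = \gamma|S_k|$ and $|w_k(R)| \leq (1+1/k)\gamma|R|$ for every dyadic cube $R \subseteq S_k$ of diameter at least $1/k$. Fixing a level $m$ with $2^m > 1/k$ and writing $S_k = \bigsqcup_j R_j$ as the corresponding disjoint union of level-$m$ cubes, one has
\[
\gamma|S_k| = |w_k(S_k)| \;\leq\; \sum_j |w_k(R_j)| \;\leq\; (1+1/k)\gamma|S_k|.
\]
Restricting to any sub-collection $\mathcal{C}$ of cubes and applying the cube-wise upper bound only to the complementary cubes yields the \emph{lower} estimate
\[
\sum_{R \in \mathcal{C}} |w_k(R)| \;\geq\; \gamma\Big|\bigcup \mathcal{C}\Big| - (\gamma/k)|S_k|,
\]
while a parallel subadditivity calculation bounds the \emph{overlap} of the image sets by the same error,
\[
\sum_{R \in \mathcal{C}} |w_k(R)| - \Big|\bigcup_{R \in \mathcal{C}} w_k(R)\Big| \;\leq\; (\gamma/k)|S_k|.
\]
Both error terms vanish as $k \to \infty$ for fixed data.

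To conclude, approximate each $B_i^k$ by the disjoint union $U_i$ of all level-$m$ dyadic cubes (with $2^m \approx \delta/100$, as produced by Lemma \ref{bigcubes}) that meet it, so that $B_i^k \subset U_i \subset B(x_i^k, \delta/8)$. The $U_i$ are themselves pairwise disjoint since each cube has diameter much smaller than the separation between the $B_i^k$, and $|U_i| \geq c_0(\delta/10)^s$ by Ahlfors regularity of $X_k$. Applying the two displayed bounds with $\mathcal{C} = \{R : R \subseteq \bigcup_i U_i\}$, and using $w_k(R) \subset B(y^k, \delta/5)$ for $R \in \mathcal{C}$ together with the Ahlfors upper bound on $Y_k$, yields
\[
\gamma N c_0 (\delta/10)^s - (\gamma/k)|S_k| \;\leq\; \sum_{R \in \mathcal{C}} |w_k(R)| \;\leq\; C_0 (\delta/5)^s + (\gamma/k)|S_k|.
\]
Choosing $k$ large enough (depending on $N$ and $\delta$) to absorb the $(\gamma/k)|S_k|$ terms leaves $N \leq 2^{s+1} C_0/(\gamma c_0) =: N_0$, a bound independent of the chosen configuration. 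Since this contradicts the choice of $N > N_0$ at the outset, the lemma follows.

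The main technical obstacle is bookkeeping: one has to keep straight the nested hierarchy of error scales — the fixed preimage separation $\delta$, the cube scale $\delta/100$, the additive errors $\epsilon_k$ from Lemma \ref{ctsalmostiso2}, and the $1/k$ error in the cube measure condition — and choose $k$ sufficiently large so that all these errors are negligible compared to the principal terms of order $\delta^s$. Once this hierarchy is fixed, each estimate is routine.
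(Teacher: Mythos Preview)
Your argument is correct and follows essentially the same route as the paper: push the $N$ preimages forward to $X_k$ via the almost-isometries, surround them by disjoint unions of cubes from Lemma \ref{bigcubes}, and play the identity $|w_k(S_k)|=\gamma|S_k|$ against the cube-wise upper bound $|w_k(R)|\le(1+1/k)\gamma|R|$ to force $N$ to be bounded. Your ``overlap bound'' is a valid intermediate step (it drops out of the same subtraction), though one can bypass it and get the final inequality in a single line, as the paper does. The one substantive difference is that the paper proves the slightly stronger David--Semmes regularity statement (for every $r<1/20$ and every $y\in M$, $w^{-1}(B(y,r))\cap K$ is covered by $N$ balls of radius $r$, with $N$ independent of $r$), whereas you bound only the pointwise multiplicity; for the purposes of this lemma either suffices.
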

\begin{proof}
We will show that there exists $N$ such that for all $r<1/20$ and every $y\in M$, $w^{-1}(B(y,r)) \cap K$ is contained in the union of $N$ balls of radius $r$ in $X$. This clearly suffices to prove the lemma. (This essentially shows the stronger statement that $w$ is a David-Semmes regular mapping, as in Definition \ref{DSregular}, but we do not need this here.)

Recall from Propositions \ref{almostisos1} and \ref{almostisos} that there are ``almost-isometries'' $\phi_k: F\rightarrow F_k\subset X_k$ and $\sigma_k:Y\rightarrow Y_k$, which, on some fixed ball, preserve distances up to an additive error that tends to zero as $k$ approaches infinity. In addition, it follows immediately from those propositions that
$$ \lim_{k\rightarrow\infty} \rho_k\left(w_k(\phi_k(x)), \sigma_k(w(x))\right) = 0$$
locally uniformly on $F\subset X$. 

Fix a ball $B(y,r)$ in $M$. Let $E = w^{-1}(B(y,r)) \cap K$. Let $E_k = \phi_k(E) \subseteq X_k$. Note that if $k$ is sufficiently large, we have both that $E_k\subset S_k$ and $w_k(E_k) \subset B(\sigma_k(y),2r)$. By Lemma \ref{bigcubes} we may write $S_k$ as a disjoint union of cubes $Q$, each satisfying
$$ (2C_0^2)^{-1} r \leq \diam Q \leq r $$
and
$$ (2C_0^2)^{-d} r^s \leq |Q| \leq r^s. $$
We will call these cubes ``$r$-sized''.

Let $\mathcal{Q}$ denote the collection of $r$-sized cubes in $S_k$ that intersect $E_k$, and let $N_k = \#\{Q\in\mathcal{Q}\}$. Because $w_k$ is $1$-Lipschitz on $S_k$,
$$w_k(Q) \subset  B(\sigma_k(y),(2+2C_0)r)\subset Y_k$$
for all $Q\in\mathcal{Q}$.

Therefore, dividing $S_k$ into those $r$-sized cubes that are in $\mathcal{Q}$ and those that are not (and taking all Hausdorff measures with respect to $X_k$ and $Y_k$) we see that
\begin{align*}
\gamma|S_k| = |w_k(S_k)| &\leq \left|\bigcup_{Q\in\mathcal{Q}} w_k(Q)\right| + \left|\bigcup_{Q\notin \mathcal{Q}}w_k(Q)\right|\\
&\leq |B(\sigma_k(y),(2+2C_0)r)| + \gamma(1+1/k)\sum_{Q\subset S_k, Q\in\Delta_{n_r}\setminus\mathcal{Q}} |Q|\\
&= |B(\sigma_k(y),(2+2C_0)r)| + \gamma(1+1/k)\left(|S_k| - \sum_{Q\in\mathcal{Q}} |Q|\right)\\
&\leq  C_1 r^s + \gamma(1+1/k)\left(|S_k| - N_kC_2 r^s\right)
\end{align*}
where $C_1$ depends only on $C_0$ and the Ahlfors-regularity constant of $M$, and $C_2= (2C_0^2)^{-s}$.

Rearranging this inequality yields
$$ N_k \leq \frac{C_1 r^s + \frac{1}{k}|S_k|}{\gamma(1+\frac{1}{k})C_2 r^s}.$$
 
Because the measures $|S_k|$ are uniformly bounded, we see that for all $k$ sufficiently large (depending on $r$, but that is fine), we have
$$N_k\leq \frac{2C_1}{C_2 \gamma}.$$

Since each cube in $\mathcal{Q}$ is contained in a ball of radius $2C_0r$ in $X_k$, and each $X_k$ is doubling with constant depending only on $C_0$ and $d$, we get that $E_k$ is contained in a union of $N$ balls of radius $r$, where $N$ depends only on $s, C_0$ and $\gamma$. (This holds for all $k$ sufficiently large.)

It immediately follows that the same holds for $E$ (with a possibly larger $N$) by using the distance-preserving properties of $\psi_k$ and $\phi_k$ for $k$ large, and the fact that $X$ is doubling. This proves the lemma.
\end{proof}

\begin{rmk}
In the proof of Lemma \ref{wboundedmult}, we used the fact that $w$ is a limit of mappings $w_k$, each of which does not multiply the measure of cubes of size at least $1/k$ by much more than the factor $\gamma$. The proof would be somewhat simpler if we knew that $w$ itself does not expand the measure of any cube by more than a factor $\gamma$, because then the computations above could all be carried out in the limit $w:X\rightarrow M$, rather than in the limiting objects $w_k:X_k\rightarrow Y_k$. Unfortunately, it is not clear that this ``non-expanding'' property of the maps $w_k$ passes directly to the limit map $w$. The same issue arises in Lemma \ref{1preimage} below.
\end{rmk}

Note now that the set $K$ is a compact set that is the closure of an open set in the homology $d$-manifold $X$. It follows that every relatively open subset of $K$ contains an open subset of $X$ and thus has topological dimension at least $d$ (see \cite{HR02}, Remark 1.3(b)). Recall our assumption that $w(K)$ lies in a single chart of $M$. As $w$ has bounded multiplicity on $K$, we can apply Theorem \ref{bm} to obtain a dense open subset $V$ in $w(K)$ such that $U=w^{-1}(V)\cap K$ is dense in $K$ and $w|_U$ is a covering map.

\begin{lemma}\label{1preimage}
Every point in $V$ has exactly one pre-image in $K$ under $w$.
\end{lemma}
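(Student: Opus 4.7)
The plan is to assume, for contradiction, that some $y \in V$ has two distinct preimages $x_0, x_1 \in K$, and then derive a contradiction by playing a tight upper bound on $|w(R)|$ against a matching lower bound on $|w(R \cup R')|$, where $R$ and $R'$ are small ``twinned'' open sets sitting over a common image.

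To set up the twinning, I will use the covering-map structure. Since $x_0, x_1 \in U$ and $w|_U \colon U \to V$ is a covering map, I can pick disjoint open neighborhoods $W_0 \ni x_0$ and $W_1 \ni x_1$ in $U$ on which $w$ restricts to a homeomorphism with a common image $B \ni y$. I then fix a small open ball $R \subset W_0$ (small enough that $w(R) \subset B$) and set $R' := (w|_{W_1})^{-1}(w(R)) \subset W_1$. By construction $R \cap R' = \emptyset$ and $w(R) = w(R')$, and both $R$ and $R'$ are nonempty open subsets of the Ahlfors regular space $X$, so $|R|, |R'| > 0$.

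The heart of the argument rests on three estimates, applied to $E := R \cup R'$. The first is an upper bound on $|w(R)|_M$: approximating $\phi_k(R) \subset S_k$ by a cube-union via Lemma \ref{bigcubes}, the cube-expansion bound (\ref{wnoexpand}) combined with the small-boundary property of the dyadic decomposition gives $|w(R)| \leq \gamma |R|$ in the limit. The second is a matching lower bound on $|w(E)|$: starting from $|w_k(E_k)| + |w_k(S_k \setminus E_k)| \geq |w_k(S_k)| = \gamma|S_k|$, applying (\ref{wnoexpand}) to the cube-decomposition of the complement $S_k \setminus E_k$, and letting $k \to \infty$, I get $|w(E)| \geq \gamma|E| = \gamma(|R|+|R'|)$. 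The third is the trivial identity $|w(E)| = |w(R) \cup w(R')| = |w(R)|$ arising from $w(R)=w(R')$. Chaining these,
$$\gamma(|R| + |R'|) \;\leq\; |w(E)| \;=\; |w(R)| \;\leq\; \gamma|R|,$$
which forces $|R'| = 0$ and contradicts $|R'| > 0$.

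The main obstacle will be the measure-theoretic bookkeeping needed to transfer the tight two-sided bounds cleanly from the mapping packages $w_k$ to the limit $w$. Approximating $R$, $R'$, and $E$ by disjoint cube-unions $R_k, R'_k, E_k \subset S_k$ of scale $\geq 1/k$ requires controlling boundary error via the small-boundary property of the dyadic decomposition. The more delicate point is showing $|w_k(E_k)| \to |w(R)|$: since $w_k(R_k)$ and $w_k(R'_k)$ both approximate the single limit set $w(R) \subset M$, their union does not double-count in the limit. Making this precise uses the uniform convergence of $w_k \circ \phi_k$ to $w$ provided by Lemma \ref{ctsalmostiso2}, together with the uniform Ahlfors regularity supplied by Lemma \ref{ARlimit}, to promote Hausdorff convergence of the image sets to convergence of their $s$-dimensional Hausdorff measures.
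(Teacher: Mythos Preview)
Your proposal has a genuine gap at the step you yourself flag as ``more delicate'': the claim that Hausdorff convergence of the image sets $w_k(E_k)$ to $w(R)$, together with uniform Ahlfors $s$-regularity, yields convergence of $s$-dimensional Hausdorff measures. This implication is false in general. Hausdorff convergence only gives set containments up to $\epsilon$-neighborhoods, and for a compact set $A$ in an Ahlfors $s$-regular space the quantity $|N_\epsilon(A)|$ can vastly exceed $|A|$ (think of $A$ a fine $\epsilon$-net in a ball). Concretely, from $d_H(w_k(R_k),\sigma_k(w(R)))\to 0$ and $d_H(w_k(R'_k),\sigma_k(w(R)))\to 0$ you do get $w_k(R'_k)\subset N_{2\epsilon_k}(w_k(R_k))$, but this does \emph{not} yield $|w_k(E_k)|\leq |w_k(R_k)|+o(1)$, because $w_k(R_k)$ may itself be a thin set whose $\epsilon$-neighborhood is much larger in measure. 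The same obstruction blocks the upper bound $|w(R)|\leq\gamma|R|$: from $|w_k(R_k)|\leq(1+1/k)\gamma|R_k|$ you would need $\liminf_k|w_k(R_k)|\geq |w(R)|$, and nothing in your toolkit provides this. The paper explicitly warns about exactly this issue in the Remark following Lemma~\ref{wboundedmult}: it is not clear that the non-expanding property of the $w_k$ passes to the limit $w$, and the same difficulty recurs here.

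The paper avoids this problem by never passing measure inequalities to the limit. Instead, it transfers the \emph{topological} information (that $w$ is a homeomorphism near $x$) back to the approximating packages via the degree-theoretic Lemma~\ref{degree}, obtaining the hard containment $w_k(B(x_k,2r))\supseteq B(y_k,r'/2)$ for large $k$. The second preimage $x'$ then contributes a single small cube $Q_0\ni x'_k$ with $w_k(Q_0)\subset B(y_k,r'/2)\subset w_k(T)$ and $Q_0\cap T=\emptyset$; the redundancy of $Q_0$ is now an honest set inclusion inside $Y_k$, and a direct measure count in $S_k$ using (\ref{wnoexpand}) produces the contradiction. In short, the missing ingredient in your argument is precisely Lemma~\ref{degree}: without it you cannot convert Hausdorff closeness of images into an actual containment at level $k$, and the tight measure comparison you need collapses.
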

\begin{proof}
In other words, what we must show is that if $x\in U$ and $x'\in K$ with $x'\neq x$, then $w(x)\neq w(x')$. Suppose to the contrary that $w(x)=w(x')=y\in V$. As $x\in U$ and $w$ is a covering map when restricted to $U$, we obtain a ball $B(x,r)\subset U$ such that $w|_{B(x,r)}$ is a homeomorphism and $w(B(x,r))$ contains a ball $B(y,r')\subset M$. Without loss of generality, we may take $r<d(x,x')/10C_0$ and $r<1/20$.

Recall the the continuous ``almost isometries'' $f_k:K\rightarrow X$ from Lemma \ref{ctsalmostiso}. By Lemma \ref{degree}, for all $k$ sufficiently large, we obtain $x_k=f_k(x)\in S_k$ such that $w_k(B(x_k,2r))$ contains the ball $B(y_k,r'/2)\subset Y_k$, where $y_k=w_k(x_k)$. Also let $x'_k = f_k(x')$. For all $k$ large, we have $\rho_k(w_k(x_k), w_k(x'_k))<r/10$, because $w(x) = w(x')$.

Let $r_1=\min\{r,r'\}$. By Lemma \ref{bigcubes}, we may write $S_k$ as the disjoint union of sets $Q$ such that
$$ (2C_0^2)^{-1} r_1/10 \leq \diam Q \leq r_1/10 $$
and
$$ (2C_0^2)^{-d} (r_1/10)^s \leq |Q| \leq (r_1/10)^s. $$
One of these sets $Q$ contains the point $x'_k$; let $Q_0$ denote that set. In addition, let $T$ be the union of all these sets $Q$ that intersect $B(x_k, 2r)$. Note that $Q_0$ is not in $T$ by our choice of $r$ and $r_1$. Then
$$w_k(Q_0)\subset B(y_k,r'/2)\subset w_k(T).$$

We now sum over all the sets $Q$ in $S_k$ as above that are not $Q_0$. Because $w_k(Q_0) \subseteq \bigcup_{Q\neq Q_0} w_k(Q)$, we have that
\begin{align*}
\gamma|S_k| = |w_k(S_k)| &\leq \sum_{Q\neq Q_0} |w_k(Q)|\\ 
&\leq \gamma(1+1/k)\sum_{Q\neq Q_0} |Q|\\
&\leq \gamma(1+1/k)(|S_k| - C_3 r_1^s)
\end{align*}
where $C_3 = (2C_0^2)^{-s}$.

Rearranging and recalling that $|S_k|\leq C_02^s = C_4$, we get
$$ \gamma C_3 r_1^s \leq \frac{\gamma}{k}(C_4 - C_3r_1^s), $$
which is a contradiction for $k$ large.
\end{proof}

\begin{lemma}\label{open}
The map $w|_A:A\rightarrow\mathbb M$ is an open mapping.
\end{lemma}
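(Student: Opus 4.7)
The plan is to apply the local degree theory of Subsection \ref{localdegree}. Fix $x \in A$ and an open set $O \subseteq A$ containing $x$; I will show that $w(x)$ lies in the interior of $w(O)$, which establishes that $w|_A$ is open. The strategy is to produce a small open ball $D = B(x, r)$ with $\overline{D} \subset O$ such that $w(x) \notin w(\partial D)$ and the local degree $\mu(w(x), D, w)$ is nonzero. Given these two properties, the conclusion follows quickly: $\partial D$ is compact (by completeness and Ahlfors regularity of $X$), so $w(\partial D)$ is compact and $M \setminus w(\partial D)$ is open. Let $\Omega$ be the connected component of $M \setminus w(\partial D)$ containing $w(x)$. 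By the local constancy property of Lemma \ref{ld_props}, $\mu(\cdot, D, w)$ equals a fixed nonzero integer throughout $\Omega$; by the vanishing property, every point of $\Omega$ must then have a preimage in $\overline{D}$; and since $\Omega$ is disjoint from $w(\partial D)$, each such preimage lies in $D$. Thus $\Omega \subseteq w(D) \subseteq w(O)$ is an open neighborhood of $w(x)$.

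To arrange $w(x) \notin w(\partial D)$, I invoke the bounded multiplicity of $w|_K$ from Lemma \ref{wboundedmult}: the fiber $w^{-1}(w(x)) \cap K$ is a finite set $\{x, z_2, \dots, z_m\}$, so any radius $r$ smaller than $\min_{i \geq 2} d(x, z_i)$ and smaller than $d(x, X \setminus O)$ guarantees that $x$ is the unique preimage of $w(x)$ in $\overline{D}$. In particular, $w(x) \notin w(\partial D)$.

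The nonvanishing of $\mu(w(x), D, w)$ is the heart of the argument. Since $w|_U : U \to V$ is a covering map (by Theorem \ref{bm}) with single-point fibers (by Lemma \ref{1preimage}), it is in fact a homeomorphism, so $w$ is a local homeomorphism at every point of $U$. Using the density of $U$ in $K$, pick $x_0 \in U \cap D$ and let $y_0 = w(x_0) \in V$. Choose a small open neighborhood $N$ of $x_0$ with $\overline{N} \subset D$ on which $w|_N$ is a homeomorphism. By Lemma \ref{1preimage}, $x_0$ is the only preimage of $y_0$ in all of $K$, so $y_0 \notin w(\partial N) \cup w(\partial D)$. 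The third and fourth bullets of Lemma \ref{ld_props} then give $\mu(y_0, D, w) = \mu(y_0, N, w) = \pm 1$. Now, by choosing $x_0$ sufficiently close to $x$ and using the continuity of $w$ together with the openness of the component $\Omega$ of $M \setminus w(\partial D)$ containing $w(x)$, we arrange that $y_0 \in \Omega$. Local constancy then yields $\mu(w(x), D, w) = \mu(y_0, D, w) = \pm 1 \neq 0$.

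The principal obstacle is that $x$ itself may fail to lie in the ``nice'' set $U$, so the local degree at $w(x)$ cannot be computed directly from the behavior of $w$ near $x$. The resolution is to transfer the computation to a nearby point $x_0 \in U$, where $w$ is a local homeomorphism, and then propagate the resulting value back to $w(x)$ via the local constancy of $\mu$ on components of $M \setminus w(\partial D)$. The remaining technical steps --- shrinking $D$ to isolate $x$ among the preimages of $w(x)$, and shrinking $d(x_0, x)$ to keep $y_0$ in the correct component --- are routine once Lemmas \ref{wboundedmult} and \ref{1preimage} and the density of $U$ in $K$ are in hand.
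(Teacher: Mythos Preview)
Your proof is correct and follows essentially the same approach as the paper's: both isolate $x$ using bounded multiplicity, compute the local degree as $\pm 1$ at a nearby point $x_0\in U$ (where Lemma \ref{1preimage} and the covering property make $w$ a local homeomorphism), and transfer this value via local constancy of $\mu$; the paper merely packages the argument as a proof by contradiction rather than a direct one. One small caveat: the degree theory in Subsection \ref{localdegree} is formulated for relatively compact \emph{domains}, and a metric ball $B(x,r)$ in $X$ need not be connected, so you should replace $D$ by a connected open neighborhood of $x$ (as the paper does, or via Lemma \ref{ballsareconnected}).
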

\begin{proof}
We use the notion of local degree defined in Subsection \ref{localdegree}, which we may apply to the oriented generalized $d$-manifold containing $A$. 

Suppose $w$ is not an open mapping on $A$. Then there is a point $x\in A$ and an open set $G\subseteq A$ containing $x$ such that $y=w(x)$ is not an interior point of $w(G)$. Since $w$ has bounded multiplicity, we can find a closed ball in $G$ containing $x$ and no other pre-images of $y$. Let $B$ be a connected open subset of this ball containing $x$. Then $\overline{B} \cap w^{-1}(y) = \{x\}$.

We now claim that the local degree $\mu(y,B,w)$ is $0$. Suppose to the contrary that $\mu(y,B,w)\neq 0$. Choose a small connected neighborhood $N$ of $y$ that does not intersect the compact set $w(\partial B)$. Then $\mu(y',B,w)\neq 0$ for all $y'\in N$. It follows (by Lemma \ref{ld_props}) that $N\subseteq w(B)$, which contradicts our assumption that $y\notin \text{int}(w(G))$. So $\mu(y,B,w)=0$.

On the other hand, we can choose $x'\in B\cap U$ so that $y'=w(x')\in V$ is arbitrarily close to $y$. By Lemma \ref{1preimage}, $x'$ is the only pre-image of $y'$. As before, choose a small connected neighborhood $B'\subset B$ around $x'$ so that $w|_{B'}$ is a homeomorphism and $\partial B'$ avoids the (finitely many) pre-images of $y$. Remember that $B'$ contains the only pre-image $x'$ of $y'$ in $B$. It follows from Lemma \ref{ld_props} that 
$$ \mu(y', B, w) = \mu(y', B', w) = \pm 1. $$

Now, if $y'$ is sufficiently close to $y$, then $y'$ is in the same connected component of $M\setminus w(\partial B)$ as $y$. Because the local degree is locally constant (Lemma \ref{ld_props}), we see that
$$ \mu(y,B,w) = \mu(y',B,w). $$
But the left-hand side is $0$ while the right-hand side is not. This completes the proof that $w$ is an open mapping.
\end{proof}

From the previous two lemmas it immediately follows that $w$ is a homeomorphism on $A$. Indeed, we only need show it is injective. Suppose $w(x) = w(x')$. Choose small disjoint balls $B$ and $B'$ containing $x$ and $x'$, respectively. Then $w(B)\cap w(B')$ is an open set in $w(A)$ and therefore contains a point of $V$. This contradicts Lemma \ref{1preimage}.

Because $w$ is a homeomorphism, there are radii $r,r'>0$ such that $w(B(v,r)) \supseteq B(w(v),r')$. It follows by Lemma \ref{degree} and Lemma \ref{ctsalmostiso} that for all $k$ sufficiently large,
$$w_k(S_k) \supseteq w_k(B(f_k(v), 2r)) \supseteq B(w_k(f_k(v)),r'/2) \supseteq B(w_k(v_k), r'/3).$$

This contradicts property (\ref{noballinw}) of $w_k$ if $k$ is large enough.

This completes the proof of Proposition \ref{reduction} and thus of Theorem \ref{blp}.

\section{Proof of Theorem \ref{rect}}

Let $X$ be an Ahlfors $d$-regular, LLC, oriented topological $d$-manifold. (We re-emphasize the fact that here the Ahlfors regularity dimension and the topological dimension of $X$ must coincide.) We will apply Theorem \ref{blp} (in the case $Y=\mathbb{R}^d$) to a class of maps on $X$ provided by a theorem of Semmes. These are given in the following result, which is a slightly weakened version of Theorem 1.29(a) of \cite{Se96}.

\begin{thm}[\cite{Se96}, Theorem 1.29(a)]\label{bubble}
Let $B$ be an open ball in $X$ of radius $r>0$. Then there is a surjective Lipschitz map $f$ from $X$ onto the standard $d$-dimensional unit sphere $\mathbb{S}^d$ with Lipschitz constant $\leq Cr^{-1}$ that is constant on $X\setminus B$. The constant $C$ depends only on the data of $X$.
\end{thm}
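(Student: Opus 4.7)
The plan is to construct $f$ in two stages, using the topological manifold structure of $X$ to achieve surjectivity onto $\mathbb{S}^d$, and the LLC property to control the Lipschitz constant.

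First I would fix the center $x_0$ of $B$. Since $X$ is a topological $d$-manifold, one can choose a closed set $D \subset B(x_0, r/4)$ homeomorphic to the closed unit $d$-disk, with $x_0$ in its interior. The quotient $D/\partial D \cong \mathbb{S}^d$ yields a continuous surjection $g_0 : D \to \mathbb{S}^d$ that collapses $\partial D$ to a basepoint $p \in \mathbb{S}^d$ and is a homeomorphism on the interior. Extending by $g \equiv p$ on $X \setminus D$ gives a continuous map $g: X \to \mathbb{S}^d$ which is surjective, has degree one at interior points of $D$, and is constant on $X \setminus B$. However $g$ carries no a priori Lipschitz control.

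The second and main step is to replace $g$ by a Lipschitz map with constant $\leq C r^{-1}$. For this I would use an LLC-based polyhedral approximation, in the spirit of Propositions 5.4 and 5.8 of \cite{Se96}, which already appear as the tools behind Lemma \ref{ctsalmostiso} in this paper. Roughly: cover $X$ by dyadic cubes (as in Subsection \ref{dyadiccubes}) at a scale $\sim r$, and refine them adaptively near $\partial D$. At each chosen representative point of a cube, set $f$ equal to the image under $g$, and then interpolate continuously from vertex to vertex using the LLC homotopies available in $\mathbb{S}^d$ (which is smooth, hence LLC at every scale with absolute constants). The Ahlfors $d$-regularity of $X$ bounds the number of neighbouring cubes meeting a given one, while the LLC homotopies on the target give each interpolation Lipschitz constant at most a uniform multiple of one over the diameter of the cube, that is, at most $C r^{-1}$ at the coarsest scale. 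The map remains equal to $p$ outside $B$ by construction.

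Surjectivity of $f$ would follow from a sup-norm approximation/degree argument: if the approximation $f$ stays within a small $\mathbb{S}^d$-distance of $g$, then $f$ and $g$ have the same local degree on a neighbourhood of $x_0$ by the degree-constancy facts summarized in Lemma \ref{ld_props}, so $f$ is still of degree one and therefore surjects onto $\mathbb{S}^d$.

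The main obstacle is organizing the LLC-based interpolation uniformly across all scales of the dyadic refinement near $\partial D$, so that the Lipschitz constant stays bounded by $C r^{-1}$ rather than deteriorating on the fine scales. This requires a careful inductive construction that sums up the contributions of the LLC contracting homotopies at successively smaller scales, in the spirit of the nested telescoping argument used in the proof of Lemma \ref{LLClimitset}. Once this is arranged, the constants depend only on the LLC data, the Ahlfors regularity constant, and $d$, which is exactly the asserted dependence on the data of $X$.
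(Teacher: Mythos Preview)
The paper does not prove this statement; it is quoted from Semmes \cite{Se96}, Theorem 1.29(a), and used as a black box in the proof of Theorem \ref{rect}. There is therefore no proof in the present paper to compare against.

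On the merits of your outline: the gap you flag at the end is genuine, and the two-stage strategy you describe does not close it. The continuous map $g$ you build first carries no modulus-of-continuity control; near $\partial D$ it may send points at mutual distance $\sim r$ to nearly antipodal points of $\mathbb{S}^d$. If you interpolate at the fixed scale $\sim r$, then yes, $f$ is $Cr^{-1}$-Lipschitz (any two points of $\mathbb{S}^d$ are joined by a geodesic of length $\leq\pi$), but $f$ can be far from $g$ in sup norm, and then the degree argument via Lemma \ref{ld_props} does not apply: that lemma needs $f$ and $g$ to be uniformly close, not merely to agree on a net. Conversely, if you refine adaptively so that $f$ stays close to $g$, the Lipschitz constant on the fine cubes is $C/(\text{fine scale})$, not $Cr^{-1}$. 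The ``telescoping'' argument from Lemma \ref{LLClimitset} does not help here, because there the target retractions move points by amounts that shrink geometrically with the scale, whereas your $g$ gives you no such decay.

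Semmes's actual construction in \cite{Se96} avoids this trap by not passing through an uncontrolled continuous map at all. He builds the Lipschitz map and the nontrivial degree simultaneously, by an induction on skeleta of a decomposition at scale $\sim r$: the LLC property is used to fill in the map over higher-dimensional cells, and the local homology of the $d$-manifold (not merely the existence of a topological chart) is what forces the resulting map to have nonzero degree. The Lipschitz bound $Cr^{-1}$ then comes directly from the scale of the decomposition. If you want to reconstruct the proof, that is the mechanism to aim for; the intermediate continuous-only surjection is a red herring.
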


\begin{rmk}
In Theorem \ref{bubble}, it makes no difference whether one endows $\mathbb{S}^d$ with the standard Riemannian metric of diameter $\pi$ or with the ``chordal'' metric arising from writing $\mathbb{S}^d = \{x\in\mathbb{R}^{d+1}: |x|=1\}$ and letting $d(x,y) = |x-y|$. These metrics are bi-Lipschitz equivalent. For convenience, we will use the latter.
\end{rmk}

\begin{proof}[Proof of Theorem \ref{rect}]
As above, write $\mathbb{S}^d = \{x\in\mathbb{R}^{d+1}: |x|=1\}$. Consider the projection $p$ from $\mathbb{S}^d$ onto the first $d$ coordinates in $\mathbb{R}^{d+1}$. Then $p$ is $1$-Lipschitz and $|p(\mathbb{S}^d)| = \sigma_d$, the $d$-dimensional Hausdorff measure of the unit ball in $\mathbb{R}^d$.

Therefore, by post-composing the maps of Theorem \ref{bubble} with $p$, we see that for every ball $B(x,r)\subseteq X$ there is a $Cr^{-1}$-Lipschitz map $g_B:B\rightarrow \mathbb{R}^d$ with $|g_B(B)| = \sigma_d$.

To show $X$ is locally uniformly rectifiable, we must show that for all $R>0$ there exists constants $\alpha, \beta$ such that for every ball $B$ of radius at most $R$, there is a set $E\subseteq B$ and a map $f:E\rightarrow\mathbb{R}^d$ such that $|E|\geq \beta|B|$ and $f$ is $\alpha$-bi-Lipschitz.

Fix a ball $B=B(x,r)$, where $r<R$. Let $n$ be such that $C_0 2^n < r \leq C_0 2^{n+1}$. Then $B$ contains a dyadic cube $Q\in \Delta_n$. 

As $c_0 2^n \geq \frac{c_0}{2C_0} r$, $Q$ contains a ball $B'$ of radius $\frac{c_0}{2C_0} r$. Let $g=g_{B'}$ be a map as above associated to $B'$. Then $g$ is Lipschitz with Lipschitz constant bounded by $\frac{2CC_0}{c_0r}$.

Therefore, the map $h = \frac{c_0 r}{2CC_0} g$ is $1$-Lipschitz and $|h(B')| \geq c_5 r^d$, for $c_5 = \sigma_d(c_0/2CC_0)^d$.

Thus, $|h(Q)| \geq \delta|Q|$ for some constant $\delta$ depending only on the data of $X$. By choosing $\epsilon>0$ sufficiently small in Theorem \ref{blp} (see Remark \ref{allcubes}) we get that $h$ is $\alpha$-bi-Lipschitz on a set $E\subset Q\subset B$ of measure at least $\theta|Q|\geq \beta|B|$, where $\alpha$ and $\beta$ depend only on $R$ and the data of $X$. This proves Theorem \ref{rect}.
\end{proof}

\section{Consequences of Theorem \ref{rect}}

It is now possible to derive many corollaries which result immediately from applying deep theorems of David and Semmes on uniformly rectifiable sets to the conclusion of Theorem \ref{rect}. We state two geometric examples below.

First of all, Theorem \ref{rect}, in combination with a result of Semmes in \cite{Se99}, provides a quasisymmetric embedding result for suitable compact metric manifolds. For the definition and basic properties of quasisymmetric homeomorphisms, see \cite{He01}.

\begin{cor}\label{embed}
Let $X$ be an Ahlfors $d$-regular, LLC, compact, oriented topological $d$-manifold. Then $X$ is quasisymmetrically equivalent to a space $X'$ that is also an Ahlfors $d$-regular, LLC, compact, oriented topological $d$-manifold and that is a subset of some $\mathbb{R}^N$.
\end{cor}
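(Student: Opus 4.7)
The plan is to combine Theorem \ref{rect} with an embedding theorem of Semmes \cite{Se99}; the argument should be quite short, consisting essentially of an invocation of these two results in sequence.

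First, since $X$ is compact, Ahlfors $d$-regular, LLC, and an oriented topological $d$-manifold, the hypotheses of Theorem \ref{rect} are satisfied with the topological dimension equal to the Ahlfors regularity dimension. Applying Theorem \ref{rect} gives that $X$ is uniformly rectifiable, with constants $\alpha, \beta$ depending only on $d, C_0, L,$ and $r_0/\diam(X)$. This is the property of having ``big pieces of bi-Lipschitz images of $\mathbb{R}^d$'' that serves as the input hypothesis for Semmes's embedding theorem.

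Next I would invoke the relevant result of Semmes from \cite{Se99}, which (stated roughly) takes a compact, Ahlfors $d$-regular, LLC, uniformly rectifiable topological $d$-manifold and produces a quasisymmetric homeomorphism $f : X \to X' \subset \mathbb{R}^N$ for some $N$ depending on the data, such that $X'$ equipped with the restricted Euclidean metric is itself Ahlfors $d$-regular and LLC. Being a compact, oriented topological $d$-manifold is preserved under $f$ automatically, since $f$ is a homeomorphism and these are purely topological conditions. Thus $X'$ inherits all the structural properties demanded in the statement of the corollary.

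The main obstacle is simply the bookkeeping of verifying that Semmes's theorem gives all the required properties in one step. The delicate point is that a general quasisymmetric map need not preserve Ahlfors $d$-regularity (snowflakings are a typical obstruction), so it is essential that the map $f$ be constructed so that the image, with its \emph{induced Euclidean} metric, is still $d$-regular rather than merely $s$-regular for some $s > d$. This is exactly the content of Semmes's result built on top of uniform rectifiability, so once Theorem \ref{rect} is in hand, there is no further geometric work to do.
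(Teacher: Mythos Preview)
Your proposal is correct and follows essentially the same approach as the paper: apply Theorem \ref{rect} to obtain uniform rectifiability, then invoke Semmes's embedding result from \cite{Se99}. The only cosmetic difference is that the paper makes explicit the two-step structure of Semmes's construction (a quasisymmetric deformation by an $A_1$-type weight followed by a bi-Lipschitz embedding of the deformed space into $\mathbb{R}^N$), whereas you treat it as a single black-box quasisymmetric embedding; either description yields the same conclusion, and your emphasis on why Ahlfors $d$-regularity survives is exactly the point the paper highlights.
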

\begin{proof}
By Theorem \ref{rect}, the space $X$ is uniformly rectifiable. Proposition 2.10 of \cite{Se99}, combined with equation (3.27) in that paper, shows that $X$ can be quasisymmetrically deformed by a weight so that the resulting space admits a bi-Lipschitz embedding into some $\mathbb{R}^N$.

Both the deformation and the bi-Lipschitz embedding preserve the Ahlfors $s$-regularity of $X$. For the former, this is explained in the discussion following the proof of Lemma 4.4 in \cite{Se99}; the latter is a general fact about bi-Lipschitz mappings.

Thus, if we let $X'$ be the image of the deformed $X$ under the bi-Lipschitz embedding, then $X'$ is Ahlfors $s$-regular. Because it is quasisymmetrically homeomorphic to $X$, it is also a compact, LLC, oriented topological $d$-manifold.
\end{proof}

\begin{rmk} 
Every doubling metric space quasisymmetrically embeds in some Euclidean space by Assouad's theorem (see \cite{He01}, Theorem 12.2), but in general this embedding first ``snowflakes'' the metric, increasing the Hausdorff dimension and destroying the rectifiability properties of the space. Corollary \ref{embed} is false if one replaces ``quasisymmetrically'' by ``bi-Lipschitz'', as examples of Semmes \cite{Se96_bilip} and Laakso \cite{La02} show. 
\end{rmk}

Once there is a nice embedding of the abstract metric space $X$ as a uniformly rectifiable subset of Euclidean space, all the theory of these sets developed by David and Semmes can be applied. Here we merely mention one further example, which says that the image of the embedding in Corollary \ref{embed} can be taken to lie in a particularly nice subset of $\mathbb{R}^N$.

Recall the definition of David-Semmes regular maps, introduced in Definition \ref{DSregular}. We define the following class of subsets of Euclidean space.

\begin{definition}
Let $E$ be an Ahlfors $d$-regular subset of $\mathbb{R}^n$. We say that $E$ is \textit{quasisymmetrically $d$-regular} if $E = g(f(\mathbb{R}^d))$, where $f:\mathbb{R}^d\rightarrow Y$ is a quasisymmetric homeomorphism of $\mathbb{R}^d$ onto an Ahlfors $d$-regular space $Y$, and $g:Y\rightarrow\mathbb{R}^N$ is a David-Semmes regular mapping.
\end{definition}

Quasisymmetrically $d$-regular sets admit bounded-multiplicity parametrizations by $\mathbb{R}^d$ in a controlled way.

The following corollary follows from a weakened version of the implication (C6)$\Rightarrow$(C7) in the main result of \cite{DS91}. (The full version of the result should discuss deformations by $A_1$-weights, which we have not mentioned.)

\begin{cor}\label{qsregular}
Let $X$ be an Ahlfors $d$-regular, LLC, compact, oriented topological $d$-manifold. Let $X'$ be a quasisymmetrically equivalent subset of $\mathbb{R}^N$ provided by Corollary \ref{embed}. Assume $N\geq 2d$. Then $X'$ is a contained in a quasisymmetrically $d$-regular set $E\subset \mathbb{R}^N$.
\end{cor}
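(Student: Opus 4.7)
The proof is a direct application of Theorem \ref{rect} combined with the weakened implication (C6)$\Rightarrow$(C7) from \cite{DS91}, as indicated in the statement of the corollary.

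The plan proceeds in two steps. First, I would apply Theorem \ref{rect} to $X'$ itself. This is legitimate because $X'$, as supplied by Corollary \ref{embed}, is a compact, Ahlfors $d$-regular, LLC, oriented topological $d$-manifold. Theorem \ref{rect} therefore yields that $X'$ is uniformly rectifiable, with constants depending only on the data of $X'$ (equivalently, of $X$).

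Second, since $X'$ is now a uniformly rectifiable, Ahlfors $d$-regular subset of $\mathbb{R}^N$ with $N \geq 2d$, I would invoke the implication (C6)$\Rightarrow$(C7) from \cite{DS91}. In the notation of that paper, (C6) corresponds to uniform rectifiability and (C7) produces, from a uniformly rectifiable set, a containing set of the form $g(f(\mathbb{R}^d)) \subset \mathbb{R}^N$, where $f$ is a quasisymmetric homeomorphism onto an Ahlfors $d$-regular metric space and $g$ is David-Semmes regular into $\mathbb{R}^N$. This is precisely our definition of a quasisymmetrically $d$-regular subset of $\mathbb{R}^N$, so the desired $E \supseteq X'$ is obtained. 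The role of the hypothesis $N \geq 2d$ is the standard one in the David-Semmes parametrization theory: it provides enough room in $\mathbb{R}^N$ to realize the composition $g \circ f$ as a mapping into the given ambient Euclidean space.

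The main obstacle is not conceptual but citational. The full implication (C6)$\Rightarrow$(C7) in \cite{DS91} is phrased so that the containing set $E$ arises only after a preliminary deformation of $X'$ by an $A_1$-weight, a machinery the present paper has chosen not to introduce. One must therefore verify that the weakened form discarding the $A_1$-deformation still yields the containment of $X'$ itself (rather than of some weight-deformed copy) in a quasisymmetrically $d$-regular set, and that the notion of ``quasisymmetrically $d$-regular'' used in \cite{DS91} coincides with (or implies) the one given in our Definition. Granting these identifications, no further argument is required.
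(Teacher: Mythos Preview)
Your proposal is correct and matches the paper's own proof, which is simply the one-line citation ``This follows from Corollary \ref{embed}, Theorem \ref{rect}, and the main result of \cite{DS91} (specifically, the implication (C6)$\Rightarrow$(C7)).'' Your expanded discussion of the citational subtlety surrounding the $A_1$-weight deformation is apt and indeed echoes the paper's own caveat preceding the corollary.
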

\begin{proof}
This follows from Corollary \ref{embed}, Theorem \ref{rect}, and the main result of \cite{DS91} (specifically, the implication (C6)$\Rightarrow$(C7)).
\end{proof}

In general, it is not possible to find good (e.g. quasisymmetric or bi-Lipschitz) parametrizations of metric spaces such as those in Corollary \ref{qsregular} by standard spaces such as $\mathbb{S}^d$ or $\mathbb{R}^d$. Corollary \ref{qsregular} provides a weaker form of ``parametrization'', in that it yields a mapping onto but not into the space, and that is bounded-multiplicity rather than injective.

\section{Counterexamples}\label{counterex}
To conclude, we wish to briefly describe some counterexamples regarding the class of ``Lipschitz implies bi-Lipschitz'' theorems mentioned at the beginning of this paper. By this we mean the class of theorems that say that if $f:X\rightarrow Y$ is a Lipschitz mapping with positive-measure image, then $f$ is bi-Lipschitz on a set of positive measure, quantitatively. None of these counterexamples are new, but they are scattered in a few different places in the literature and it may be convenient to collect them in one place.  The first two can be found in Meyerson's paper \cite{Me13}, the third is due to David and Semmes \cite{DS97}, and the fourth is an example of Laakso \cite{La99}.

The first counterexample shows that, in the setting of Theorem \ref{blp}, the requirement that the two spaces have the same topological dimension is necessary. This proposition is proven by Meyerson in \cite{Me13}, Theorem 4.1. Here we give a slightly different argument.

\begin{prop}\label{spacefilling}
There is an Ahlfors $2$-regular, linearly locally contractible, complete oriented topological $1$-manifold $X$ and a Lipschitz map $f:X\rightarrow \mathbb{R}^2$ with positive measure image that is not bi-Lipschitz on any subset of positive measure.
\end{prop}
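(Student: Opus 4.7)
The plan is to take $X=(\mathbb{R},d)$ with $d(x,y)=|x-y|^{1/2}$ (the half-snowflake of the real line) and let $f:\mathbb{R}\to\mathbb{R}^2$ be a $1/2$-H\"older space-filling curve, for concreteness a $\mathbb{Z}^2$-periodic extension of the Hilbert curve. The routine geometric conditions are all immediate: the identity $\mathbb{R}\to X$ is a homeomorphism, so $X$ is a complete oriented topological $1$-manifold; the ball $B_d(x,r)$ equals the Euclidean interval $(x-r^2,x+r^2)$, and a direct computation gives $\mathcal{H}^2_d(B_d(x,r))\asymp r^2$, so $X$ is Ahlfors $2$-regular; each such ball is a subinterval, hence contractible inside itself, so $X$ is $(1,\infty)$-LLC. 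The H\"older bound $|f(x)-f(y)|\le C|x-y|^{1/2}=C\,d(x,y)$ makes $f\colon X\to\mathbb{R}^2$ Lipschitz, and space-fillingness gives $|f(X)|_{\mathbb{R}^2}>0$.

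Suppose for contradiction that $f|_E$ is $C$-bi-Lipschitz for some measurable $E\subset X$ with $\mathcal{H}^2_d(E)>0$. Since $\mathcal{H}^2_d$ is comparable with Lebesgue measure on $\mathbb{R}$, the Lebesgue density theorem supplies a density point $x_0\in E$. I would form the blow-up mapping packages
\[
\bigl((X,\lambda_n d,x_0),\;(\mathbb{R}^2,\lambda_n|\cdot|,f(x_0)),\;f\bigr),\qquad \lambda_n\to\infty.
\]
Under the isometry $(X,d,0)\to(X,\lambda_n d,x_0)$ given by $u\mapsto x_0+u/\lambda_n^2$ in $\mathbb{R}$-coordinates, every rescaling of $X$ reproduces $X$ itself, and $\mathbb{R}^2$ is trivially self-similar; the maps $f$ are uniformly $C$-Lipschitz in these rescaled metrics. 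Proposition~\ref{sublimit} then yields a subsequential limit $f_\infty\colon X\to\mathbb{R}^2$, still $C$-Lipschitz. Because $x_0$ is a density point, the rescaled traces of $E$ exhaust every bounded ball of $X$ in measure, and combining this density with equicontinuity of $f$ on bounded balls, the lower bi-Lipschitz bound on $E$ passes to $|f_\infty(u)-f_\infty(v)|\ge C^{-1}d(u,v)$ for \emph{all} $u,v\in X$. Hence $f_\infty$ is a global bi-Lipschitz embedding of $(\mathbb{R},|\cdot|^{1/2})$ into $\mathbb{R}^2$.

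The final step, and the main obstacle, is to derive a contradiction from the existence of $f_\infty$. I would exploit the recursive self-similarity of the Hilbert curve: at the dyadic point $x_0=0$ and along dyadic scales $\lambda_n=2^n$, the relation $f(s)=2^{-n}R_n f(4^n s)$ for $s\in[0,1/4^n]$ implies, after the isometric identification above, that $f_{\lambda_n}$ coincides with $R_n\circ f$ on the unit ball of $X$, for certain rigid motions $R_n\in O(2)$. Passing to a further subsequence so that $R_n\to R$, the limit $f_\infty$ equals $R\circ f$ on this unit ball, and is therefore not injective: a continuous bijection from the compact $1$-dimensional interval $[-1,1]\subset X$ onto its image in the $2$-dimensional Hausdorff space $\mathbb{R}^2$ would be a homeomorphism (closedness of the domain giving continuity of the inverse), violating invariance of dimension --- so $f$, and hence $f_\infty$, cannot be injective, contradicting its bi-Lipschitz character. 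The technical content is extending this from dyadic density points to Lebesgue-a.e.\ density points $x_0\in E$, which one can handle by an equidistribution argument on the binary expansion of $x_0$ to select a subsequence of dyadic scales along which the blow-up at $x_0$ still converges to an affine copy of $f$. Alternatively, one bypasses self-similarity entirely by invoking directly the non-embeddability of $(\mathbb{R},|\cdot|^{1/2})$ into $\mathbb{R}^2$ as a bi-Lipschitz target, which is essentially Meyerson's route in \cite{Me13}.
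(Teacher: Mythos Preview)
Your construction of $X=(\mathbb{R},|\cdot|^{1/2})$ and $f$ a $\tfrac12$-H\"older space-filling curve, together with the blow-up to a global bi-Lipschitz limit $f_\infty:X\to\mathbb{R}^2$, matches the paper's proof exactly. The divergence is in the final step, where you have a genuine gap.

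Your self-similarity argument needs the density point $x_0$ to sit at a dyadic vertex of the Hilbert construction, and you cannot arrange this: the set $E$ is given to you, and its density points need have no relation to the dyadic structure. The ``equidistribution on binary digits'' patch is not a proof---a generic $x_0$ will have binary digits whose long-range pattern forces the local pictures of $f$ near $x_0$ at scale $4^{-n}$ to wander through infinitely many non-isometric configurations, so there is no reason the blow-up should reproduce $f$ itself. Your alternative, to ``invoke directly the non-embeddability of $(\mathbb{R},|\cdot|^{1/2})$ into $\mathbb{R}^2$,'' is precisely the statement you are trying to prove at this point; it needs its own argument.

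The paper closes this gap with one extra line: choose $x_0$ so that, \emph{in addition}, $f(x_0)$ is a Lebesgue density point of $E'=f(E)$ in $\mathbb{R}^2$ (possible since $|E'|>0$ and bi-Lipschitz maps send density points to density points up to null sets). Then the rescaled targets $(E',\lambda_n|\cdot|,f(x_0))$ converge to all of $\mathbb{R}^2$, and by passing to a further subsequence on the inverse mapping packages $((E',\lambda_n|\cdot|),(E,\lambda_n d),(f|_E)^{-1})$ one obtains a limit $h:\mathbb{R}^2\to X$ with $f_\infty\circ h=\mathrm{id}$. Hence $f_\infty$ is a bi-Lipschitz homeomorphism of $X$ \emph{onto} $\mathbb{R}^2$. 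Since $X$ is homeomorphic to $\mathbb{R}$, this contradicts invariance of dimension. This surjectivity-via-target-density replaces your self-similarity manoeuvre entirely and works for an arbitrary space-filling curve, with no recursive structure required.
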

\begin{proof}
The metric space $X$ will be the ``snowflaked'' space $(\mathbb{R}, |\cdot|^{1/2})$, equipped with two-dimensional Hausdorff measure (which is the same as one-dimensional Hausdorff measure on $(\mathbb{R},|\cdot|)$). It is clear that $X$ satisfies all the required properties.

It is well-known (see, e.g., \cite{SS05}, Theorem 7.3.1) that there is a space-filling curve $f:(\mathbb{R}, |\cdot|)\rightarrow \mathbb{R}^2$ that is H\"older continuous with exponent $1/2$ and whose image contains the unit square in $\mathbb{R}^2$. Therefore, when considered as a mappping $f:X\rightarrow\mathbb{R}^2$, $f$ is Lipschitz, and it has positive-measure image.

However, no Lipschitz map from $X$ to $\mathbb{R}^2$ can be bi-Lipschitz on a set of positive measure. Indeed, suppose that $f$ is bi-Lipschitz on a set of positive measure $E$ in $X$, with $f(0)=0$. Let $E'=f(E)\subseteq \mathbb{R}^2$. Without loss of generality, we may assume that $E$ is compact, that $0\in\mathbb{R}$ is a point of density of $E$ in $X$, and that $f(0) = 0\in\mathbb{R}^2$ is a point of density of $E'$ in $\mathbb{R}^2$. (We can always find such points.)

We now consider the sequences of mapping packages
\begin{equation}\label{counterexpackages}
\left\{\left( (E, \frac{1}{n^2}d_X, 0) , (E', \frac{1}{n}|\cdot|, 0), f )\right) \right\}.
\end{equation}

Because $0\in X$ is a point of density of $E$ and $0\in\mathbb{R}^2$ is a point of density of $E'$, we have by \cite{DS97}, Lemmas 9.12 and 9.13, that, in the sense of pointed metric spaces,
$$  \left(E, \frac{1}{n^2}d_X, 0\right) \rightarrow \left(X, d_X, 0\right) $$
and 
$$ \left(E', \frac{1}{n}|\cdot|, 0\right)\rightarrow \left(\mathbb{R}^2, |\cdot|, 0\right). $$ 

Therefore, some subsequence of the sequence of mapping packages in (\ref{counterexpackages}) converges to a mapping package
$$ \left( (X,d_X, 0) , (\mathbb{R}^2, |\cdot|, 0), g \right). $$

The mapping $g$ is bi-Lipschitz, because $f|_{E}$ is bi-Lipschitz. In addition, the map $g$ is surjective. We may see this by passing to another subsequence along which the sequence of inverse mapping packages
$$ \left\{\left( (E', \frac{1}{n}|\cdot|, 0) , (E, \frac{1}{n^2}d_X, 0), (f|_{E})^{-1} \right) \right\}$$
converges to a mapping package
$$ \left( (\mathbb{R}^2, |\cdot|, 0), (X, d_X, 0), h \right). $$
It is then easy to see that $g(h(y))=y$ for all $y\in\mathbb{R}^2$ and therefore that $g$ is surjective.

So $g$ is a bi-Lipschitz homeomorphism of $X$ onto $\mathbb{R}^2$. But this is impossible, as $X$ is homeomorphic to $\mathbb{R}$.
\end{proof}

The two spaces in Proposition \ref{spacefilling} satisfy all the conditions of Theorem \ref{blp}, except that they are manifolds of different topological dimensions.

For the remaining three counterexamples that we mention here, we merely indicate the statements and refer the reader to the original sources for the proofs.

The second example is Theorem 4.2 of \cite{Me13}. Let us first note that, as a consequence of Theorem \ref{blp}, we know the following: Let $X$ and $Y$ be spaces as in Theorem \ref{blp}. Let $U\subset X$ be an open set, and let $f:U\rightarrow Y$ be Lipschitz and satisfy $|f(U)|>0$. Then there is a countable collection of measurable sets $E_i\subset U$ such that $f|_{E_i}$ is bi-Lipschitz for each $i$ and $|f(U\setminus \cup E_i)|=0$. (Here the sets $E_i$ are not necessarily disjoint.) On the other hand, we have the following fact:

\begin{prop}[\cite{Me13}, Theorem 4.2]\label{grushin}
There is a doubling, LLC, complete, oriented topological $2$-manifold $X$ of Hausdorff dimension $2$, an open set $U\subset X$, and a Lipschitz map $f:U\rightarrow\mathbb{R}^2$ that cannot be represented in the above manner. In other words, there is no countable collection of measurable sets $E_i\subset U$ such that $f|_{E_i}$ is bi-Lipschitz for each $i$ and $|f(U\setminus \cup E_i)|=0$.
\end{prop}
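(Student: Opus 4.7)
The proof, following Meyerson \cite{Me13}, will build a counterexample out of a manifold that contains an isometric copy of a snowflaked interval of positive $\mathcal{H}^2$-measure. Concretely, I would first construct a doubling, LLC, complete, oriented topological $2$-manifold $X$ of Hausdorff dimension $2$ containing a subset $L \subset X$ such that $L$ with its induced metric is isometric to $([0,1], |\cdot|^{1/2})$ and has $|L|_X > 0$. The basic idea for $X$ is a Grushin-type pinching of the Euclidean metric along $L$, but one has to be careful: the naive Grushin plane has Hausdorff dimension $3$, so the pinching must be tuned so that the global Hausdorff dimension stays at $2$ while the metric induced on $L$ is still a genuine snowflake of an interval. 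This delicate construction is the heart of \cite{Me13}, and I would follow it in detail.

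Given $X$ and $L$, the map $f$ is built from a space-filling curve. Let $\gamma: [0,1] \to [0,1]^2 \subset \mathbb{R}^2$ be a H\"older-$\tfrac12$ continuous surjection (e.g.\ the Peano or Hilbert curve). Under the isometric identification of $L$ with $([0,1], |\cdot|^{1/2})$, $\gamma$ becomes a $1$-Lipschitz map $L \to \mathbb{R}^2$ whose image is $[0,1]^2$, a set of positive planar Lebesgue measure. Extending $\gamma$ coordinate-wise by the McShane extension theorem yields a Lipschitz map $f: X \to \mathbb{R}^2$; take $U$ to be any open neighborhood of $L$ in $X$.

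The key argument is then as follows. Suppose for contradiction that there exist measurable $E_i \subset U$ with $f|_{E_i}$ bi-Lipschitz and $|f(U \setminus \bigcup_i E_i)| = 0$. For each $i$, the restriction $f|_{E_i \cap L}$ is a bi-Lipschitz map from a subset of the snowflake arc into $\mathbb{R}^2$. But snowflake arcs are purely $\mathcal{H}^2$-unrectifiable: no subset of positive $\mathcal{H}^2$-measure admits a bi-Lipschitz embedding into any Euclidean space, a standard consequence of the fact that the weak tangents of a snowflake at every point are again snowflakes and thus never bi-Lipschitz equivalent to subsets of Euclidean space. Hence $|E_i \cap L|_X = 0$ for every $i$, so $L \setminus \bigcup_i E_i$ carries the full $\mathcal{H}^2_X$-measure of $L$, and each $|f(E_i \cap L)|_{\mathbb R^2} = 0$ by the bi-Lipschitz property. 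Since $f(L) = [0,1]^2$, we conclude $|f(L \setminus \bigcup_i E_i)|_{\mathbb R^2} \ge |f(L)| - \sum_i |f(E_i \cap L)| = 1 > 0$, contradicting $|f(U \setminus \bigcup_i E_i)| = 0$. The main obstacle is the construction of $X$ in the first paragraph; once that space is in hand, the measure-theoretic contradiction just sketched is essentially formal.
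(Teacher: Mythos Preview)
Your overall strategy is the same as the paper's: the paper does not give a self-contained proof but simply notes that $X$ can be taken to be the Grushin plane, which contains a bi-Lipschitz copy of the snowflaked line $(\mathbb{R},|\cdot|^{1/2})$ as a positive-measure subset, and then reduces to Proposition~\ref{spacefilling}. Your measure-theoretic contradiction in the last paragraph is correct and is exactly this reduction written out.

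There is, however, a factual error in your first paragraph. The standard Grushin plane already has Hausdorff dimension $2$, not $3$; what you may be thinking of is its \emph{homogeneous} dimension along the singular axis (which governs the growth of Lebesgue measure of Carnot--Carath\'eodory balls there), and that is what causes the failure of Ahlfors $2$-regularity. So there is no ``delicate tuning'' to perform: the Grushin plane itself is the space $X$, it is doubling, LLC, a complete oriented topological $2$-manifold of Hausdorff dimension $2$, and its singular line with the induced metric is bi-Lipschitz to $(\mathbb{R},|\cdot|^{1/2})$ with positive $\mathcal{H}^2$-measure. Once you drop the mistaken claim about dimension $3$ and take $X$ to be the Grushin plane directly, your argument is complete and matches the paper's sketch.
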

In particular, the conclusion of Theorem \ref{blp} does not hold for this choice of $X$ and $Y=\mathbb{R}^2$. In this result, the space $X$ can be chosen to be the sub-Riemannian manifold known as the Grushin plane. The source and target spaces in Proposition \ref{grushin} satisfy all the conditions of Theorem \ref{blp}, except that the source $X$ is not Ahlfors $2$-regular. The idea behind Proposition \ref{grushin} is to reduce to Proposition \ref{spacefilling}, because the Grushin plane $X$ contains a bi-Lipschitz equivalent copy of the snowflaked line $(\mathbb{R}, |\cdot|^{1/2})$ as a positive-measure subset.

If one completely relaxes the strong topological conditions imposed in Theorem \ref{blp}, then one can find Lipschitz mappings between metric spaces with large images but no bi-Lipschitz pieces, even in the presence of very strong analytic assumptions on the spaces and mappings.

\begin{prop}[\cite{DS97}, Proposition 14.5]\label{DScounterex}
There is a compact, Ahlfors regular metric space $X$ and a Lipschitz mapping $f:X\rightarrow X$ which is not bi-Lipschitz on any positive-measure subset. Furthermore, the mapping $f$ can be taken to be a homeomorphism which is in addition David-Semmes regular and preserves measure, in the sense that $|f(K)|=|K|$ for all compact $K\subseteq X$.
\end{prop}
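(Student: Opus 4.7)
I would follow the construction in \cite{DS97}, Chapter 14, whose outline is as follows. The plan is to build $X$ as a compact, self-similar Cantor-type metric space with its natural Ahlfors regular Hausdorff measure, and $f$ as a ``shift-type'' homeomorphism arising from the combinatorics of the self-similarity. Concretely, one can realize $X$ as an inverse limit of an appropriate sequence of metric graphs (or as a product of Cantor factors weighted by a carefully chosen family of scales), in such a way that a permutation of the combinatorial data at each level induces a self-homeomorphism of $X$. The essential feature to design in is that, at every scale and location, $f$ compresses the metric by a definite factor in some ``direction'' while expanding it in another, with the two effects exactly balancing on the level of measure.

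Next I would verify the easy properties in order: $X$ is compact and Ahlfors regular, with $s$ determined by the combinatorial data of the self-similar construction; $f$ is a homeomorphism and is Lipschitz because it acts on cylinders with uniformly bounded metric distortion; $f$ preserves $\mathcal{H}^s$ because it permutes these cylinders; and $f$ is David-Semmes regular (in fact injective) because preimages of small balls are controlled by the combinatorics, so in particular it has bounded multiplicity.

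The main step, and the hard one, is to rule out bi-Lipschitz pieces. Suppose for contradiction that $E\subset X$ has $|E|>0$ and $f|_E$ is $L$-bi-Lipschitz. I would pick a point $x_0$ which is simultaneously a density point of $E$ in $X$ and a density point of $f(E)$ in $X$; almost every point of $E$ qualifies. Mimicking the tangent argument used in Proposition \ref{spacefilling}, namely invoking Proposition \ref{sublimit} together with the density results of \cite{DS97}, Lemmas 9.12 and 9.13, one extracts a tangent mapping package and obtains a bi-Lipschitz map $\tilde f:\tilde X\to \tilde X$ between tangents. The self-similarity of the construction is set up precisely so that every tangent of $X$ is (up to controlled ambiguity) isometric to a rescaling of $X$ itself, and every tangent of $f$ is a corresponding copy of $f$; in particular, $\tilde f$ must inherit the same ``compression in some direction'' feature that $f$ has at every scale, and this rules out bi-Lipschitzness of $\tilde f$.

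The main obstacle is precisely this last point: arranging the construction so that, despite $f$ being measure-preserving, there is a genuine scale-invariant obstruction to $f$ being locally bi-Lipschitz. The resolution in \cite{DS97} is that compression and expansion happen simultaneously, in genuinely different directions of the Cantor structure, so that $\mathcal{H}^s$ is preserved while no two-sided metric control is possible on any positive-measure subset. Verifying the compatibility of the ``tangents are $X$'' and ``tangent map is $f$'' statements for this construction, and showing the compression property passes to tangents, is the delicate part of the argument.
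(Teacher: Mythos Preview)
The paper does not actually prove this proposition; it is quoted as Proposition~14.5 of \cite{DS97}, and immediately after the statement the paper simply says that $X$ is a totally disconnected Cantor set and refers the reader to Chapter~14 of \cite{DS97} for the proof. So there is no ``paper's own proof'' to compare your proposal against.

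That said, your outline is consistent with the hint the paper gives (a Cantor-type $X$ with a shift-like self-map), and the overall strategy---build $X$ self-similarly, define $f$ combinatorially so that it permutes cylinders (hence is Lipschitz, measure-preserving, and regular), then rule out bi-Lipschitz pieces by a density/tangent argument---is a reasonable plan. One caution: your final step leans on ``every tangent of $X$ is $X$ and every tangent of $f$ is $f$,'' which you would need to verify carefully for whatever specific construction you choose; this self-similarity of tangents is not automatic and is exactly where the details of the \cite{DS97} construction matter. In the actual \cite{DS97} argument the non-bi-Lipschitz conclusion is read off more directly from the combinatorics of the construction rather than via an abstract tangent-package argument, so if you intend to reconstruct the proof you may find it cleaner to exhibit, at every scale and near every point, an explicit pair of nearby points whose distance $f$ contracts by an unbounded factor.
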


The space $X$ in Proposition \ref{DScounterex} is a totally disconnected Cantor set. See Chapter 14 of \cite{DS97} for the proof and some other related constructions.

In both the positive result Theorem \ref{blp} and the counterexample Propositions \ref{spacefilling} and \ref{DScounterex}, the spaces in question may have no ``good calculus'', i.e. they may have no rectifiable curves and therefore no Poincar\'e inequality. (For the definition of Poincar\'e inequalities on metric measure spaces, see \cite{He01}.) It is not known to what extent this type of calculus is helpful in proving ``Lipschitz implies bi-Lipschitz'' theorems, but in closing we wish to note the following theorem of Laakso \cite{La99}, which shows that Ahlfors regular spaces with Poincar\'e inequalities may still fail to have such results.

\begin{prop}[\cite{La99}]
There exists an Ahlfors regular space $X$ admitting a Poincar\'e inequality and a Lipschitz map $f:X\rightarrow X$ with positive-measure image such that there is no positive-measure subset of $X$ on which $f$ is bi-Lipschitz.
\end{prop}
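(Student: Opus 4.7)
The plan is to implement Laakso's construction from \cite{La99}. First, I would construct $X$ as the inverse limit of a sequence of finite metric graphs $X_n$ obtained by a self-similar ``wormhole'' substitution rule: each edge at generation $n$ is replaced by two parallel copies of half the original length, glued at their shared endpoints, producing a ``diamond'' at generation $n+1$. Endowed with the resulting geodesic metric and the natural self-similar probability measure, $X$ is Ahlfors $Q$-regular for some explicit $Q>1$; and because the diamond structure supplies an abundance of rectifiable curves between nearby points at every scale, $X$ admits a $(1,1)$-Poincar\'e inequality (both facts are verified in \cite{La99}).

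Next, I would define $f\colon X\to X$ by a self-similar \emph{folding}: at every diamond, identify the two parallel edges via the canonical isometric pairing between them. This descends to a well-defined $1$-Lipschitz self-map of $X$ that is essentially $2$-to-$1$ with respect to the Hausdorff measure, so its image has measure at least $|X|/2$, hence positive.

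The main step, and where I expect the real difficulty, is ruling out positive-measure bi-Lipschitz pieces. Suppose for contradiction that $E\subset X$ has $|E|>0$ and $f|_E$ is $M$-bi-Lipschitz. Pick a density point $x\in E$ (available because $X$ is doubling) and blow up the mapping package $((X,\lambda_k d, x),(X,\lambda_k d, f(x)), f)$ along a sequence $\lambda_k\to\infty$, using the compactness provided by Proposition~\ref{sublimit}. Two things will happen simultaneously in the limit. On one hand, by the rigid combinatorial self-similarity of the construction, every weak tangent of $X$ is isometric to $X$ itself, and the induced tangent $f_\infty\colon X\to X$ is again the same folding map; in particular $f_\infty$ is $2$-to-$1$ on a dense set. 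On the other hand, since $x$ is a density point of $E$, the rescaled sets $\lambda_k E$ converge to all of $X$ in the local Hausdorff sense (cf.\ the argument used in Proposition~\ref{spacefilling} and Lemmas~9.12--9.13 of \cite{DS97}), so the limit of $f|_E$ must be a globally $M$-bi-Lipschitz, hence injective, map on $X$. These two descriptions of $f_\infty$ contradict each other.

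The main obstacle will be making the tangent argument precise: one must show (a) that weak tangents of the Laakso space $X$ are bi-Lipschitz to $X$ itself \emph{with the folding structure preserved}, and (b) that density points of $E$ force the limit $f_\infty$ to be globally bi-Lipschitz, not merely on some subset. The first is rooted in the exact self-similarity of Laakso's inverse-limit construction (each generation is produced from the previous by the same substitution rule, so the combinatorial picture is scale invariant up to relabeling of generations); the second is the standard density-point blow-up argument, of precisely the type carried out in Proposition~\ref{spacefilling}. Combining these two is the essence of Laakso's original argument.
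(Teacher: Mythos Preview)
The paper does not prove this proposition; it merely states it and attributes it to Laakso \cite{La99}. The only further content the paper supplies is the sentence immediately after: ``In fact, in Laakso's example the mapping $f$ does not even have any bi-Lipschitz tangents, in the sense of Section \ref{warmup}.'' So there is nothing in the paper to compare your argument against.

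On the substance of your proposal: the blow-up strategy is exactly the right idea, and the paper's closing remark confirms that this is the mechanism behind Laakso's result. But two of your claims are stronger than what holds and stronger than what is needed. First, it is not true that every weak tangent of the Laakso space is isometric (or even bi-Lipschitz) to $X$ itself; tangents of $X$ are typically non-compact and depend on the basepoint and scale sequence. What you actually need, and what suffices, is only that at \emph{every} point and along \emph{every} sequence of scales the induced tangent map $f_\infty$ fails to be injective---the fold persists at all locations and scales---so that $f$ has no bi-Lipschitz tangent anywhere. Combined with the density-point argument you outline (which does go through exactly as in Proposition \ref{spacefilling}), this gives the contradiction. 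Second, the space you describe (edge $\mapsto$ two parallel half-edges, inverse limit of diamond graphs) is the ``Laakso graph'' picture; Laakso's original construction is usually presented as a quotient $[0,1]\times K/\!\sim$ with $K$ a Cantor set and wormhole identifications at a prescribed sequence of heights. The two viewpoints are related, but you should be careful that the Poincar\'e inequality and the persistence-of-folding claim are verified for whichever model you actually write down.
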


In fact, in Laakso's example the mapping $f$ does not even have any bi-Lipschitz tangents, in the sense of Section \ref{warmup}.

\bibliography{GCDbibliography}{}
\bibliographystyle{plain}

\end{document}